\let\d=\delta
\let\G=\Gamma
\newcommand{\Z}{{\mathbb Z}}
\newcommand{\cPmzeroGammaLipCV}{\cP_{m_0}(\Gamma^{\textrm{Lip}}_{C,V})}
\newcommand{\GammamuoptC}  {\Gamma^{\mu,{\rm opt}}_C}
\newcommand{\I}{{\mathcal I}}
\newcommand{\R}{{\mathbb R}}
\newcommand{\N}{{\mathbb N}}
\newcommand{\cG}{{\mathcal G}}
\newcommand{\cP}{{\mathcal P}}
\newcommand{\car}{{\mathds{1}}}
\newcommand{\half}{\frac{1}{2}}
\let\ds=\displaystyle
\def\R{{\mathbb R}}
\def\N{{\mathbb  N}}
\def\N{{\mathbb N}}
\let\d=\delta
\let\o=\omega
\def\qed{\hspace*{\fill} $\Box$\par\medskip}
\def\co{\overline{\textrm{co}}}
\numberwithin{equation}{section}
\newtheorem{theorem}{Theorem}[section]
\newtheorem{lemma}[theorem]{Lemma}
\newtheorem{proposition}[theorem]{Proposition}
\newtheorem{definition}{Definition}[section]
\newtheorem{corollary}[theorem]{Corollary}
\newtheorem{remark}[theorem]{Remark}
\newtheorem{example}{Example}[section]
\def\@maybe{\ifx[\testchar \let\next\@Opt
          \else \let\next\@NoOpt \fi \next}
\def\@Opt[#1]{{\it Proof of #1.\ }}\def\@NoOpt{{\it Proof.\ }}
\begin{document}

\title{\Large \bf First order Mean Field Games on networks}
\author{{\large \sc Yves Achdou\thanks{Universit\'e de Paris Cit\'e and Sorbonne Universit\'e, CNRS, Laboratoire Jacques-Louis Lions, (LJLL), F-75006 Paris, France, achdou@ljll-univ-paris-diderot.fr}, Paola Mannucci\thanks{Dipartimento di Matematica ``Tullio Levi-Civita'', Universit\`a di Padova, mannucci@math.unipd.it}, Claudio Marchi \thanks{Dipartimento di Matematica ``Tullio Levi-Civita'', Universit\`a di Padova, claudio.marchi@unipd.it}, Nicoletta Tchou \thanks{Univ Rennes, CNRS, IRMAR - UMR 6625, F-35000 Rennes, France, nicoletta.tchou@univ-rennes1.fr}}} 


\maketitle
%
%
\begin{abstract}
This paper is devoted to  finite horizon deterministic mean field games in which the state space is a network. The  agents control their velocity, and when they occupy a vertex, they can enter into any incident edge. The running and terminal costs are assumed to be continuous in each edge but not necessarily globally continuous on the network. A Lagrangian formulation is proposed and studied. It leads to relaxed equilibria consisting of probability measures on admissible trajectories. 
The existence of such relaxed equilibria is obtained. The proof requires the existence of optimal trajectories and a closed graph property for the map which associates to each point the set of optimal trajectories starting from that point.\\
To any relaxed equilibrium corresponds a mild solution of the mean field game, i.e. a pair $(u,m)$ made of the value function $u$  of a related optimal control problem, and a family $m= (m(t))_t$ of
probability measures on the network. Given $m$, the value function $u$ is characterized by a Hamilton-Jacobi problem on the network. Regularity properties of $u$ and a weak form of a Fokker-Planck equation satisfied by $m$ are investigated.
\end{abstract}
\noindent {\bf Keywords}: deterministic mean field games, networks, Lagrangian formulation, first order Hamilton-Jacobi equations on networks.

\noindent  {\bf 2010 AMS Subject classification:} 35F50, 35Q91, 35R02, 49K20, 49L25, 49N80, 91A16.

\tableofcontents

%
%
\section{Introduction}
\label{intro}
The theory of Mean Field Games (MFGs in short) introduced in the pioneering articles  of Lasry and Lions \cite{LL1,LL2,LL3}, deals with the asymptotic behaviour of differential games, either deterministic or stochastic, as the number of players tends to infinity. The major part of the literature on deterministic MFGs addresses situations in which the state space is either $\R^d$ or the flat torus $\R^d/\Z^d$, and in which the dynamics of the players is strongly controllable. In such cases, the mean field game is determined by the pair made of the distribution of states at all times and  the optimal value of a representative agent. The latter quantities satisfy a system of PDEs coupling a continuity equation (forward in time) and a Hamilton--Jacobi (HJ) equation (backward in time), see~\cite{C}.

Assuming that the dynamics are strongly controllable,
Cannarsa et al, \cite{CC,CCC1,CCC2}, have studied MFGs in which the agents are constrained to remain in the closure of a regular bounded open domain of $\R^d$.  With such state constraints, the distribution of states may become singular, as it was first observed in \cite{AHLLM}, and it becomes difficult to write boundary conditions for the continuity equation  (see also Section~\ref{sect:MFGequil} below for some examples of formation and propagation of Dirac masses). For this reason,
 Cannarsa et al, following ideas contained in \cite{BB00,BC15,CM16}, introduce a  notion of relaxed equilibrium which is defined in a Lagrangian setting rather than with PDEs. The evolution of the game is described in terms of probability measures defined on a set of admissible trajectories, instead of time-dependent probability measures defined on the state space. In the same vein, Mazanti and Santambrogio,~\cite{MS19}, obtain the existence of relaxed equilibria for minimal time MFGs, in which each  agent aims at exiting a given closed subset of a general compact metric space in minimal time and faces congestion effects (her speed cannot exceed a bound depending on the density of players). See also \cite{DM} for similar models in the Euclidean setting. In \cite{AMMT2}, the authors of the present paper  prove the existence of relaxed equilibria for deterministic state constrained MFGs in which the agents control their acceleration. This is an example of state constrained MFGs in which  the strong controllability property does not hold.

The present paper aims at studying relaxed equilibria for deterministic MFGs in which the state space is a network, i.e. a subset of $\R^d$ made of a finite number of edges and vertices. Optimal control problem on junctions, networks or stratified sets is a rather recent field which contains a number of interesting open problems (see \cite{ACCT,IMZ,IM,BBC,LS, LS17, Mor,BC2023}).  The aforementioned paper~\cite{MS19} on minimal time MFGs also applies to networks. Stochastic MFGs on networks (each agent is subject to an independent noise) have been studied in \cite{ADLT2020} (see also \cite{CM16,ADLT2018} for infinite horizon problems). Finally, in the recent preprint \cite{GRA}, Gomes et al study a class of stationary MFG on networks and their relationship with Wardrop equilibria. 
The present paper can be considered as a first step of a more general research  project on deterministic MFGs on networks that we intend to pursue.

 For simplicity, we hereafter focus  on a junction, i.e. $N$ half-lines in $\R^d$ glued together at a single vertex, say the origin. Yet, all the results below may be generalized for general networks with more than one vertices and edges of possibly finite lengths. Given the time evolution of the distribution of the players, each agent solves an optimal problem with finite time horizon. We assume that  the agents control their velocity. In particular, when an agent is at the vertex, she can choose either to remain still  or to enter any edge.  The running and terminal costs depend on the distribution of agents in a non local, regularizing manner, but are not supposed to be continuous across the vertex (the costs may change from one edge to the next). We also restrict ourselves to running costs which depend quadratically on the velocity. Finally, there is a distinct running cost for staying at the vertex.  

The first part of the present paper is devoted to optimal control problems on the network, (which arise if the distribution of states in the MFG is given).
The main results concerning optimal control are as follows:
the existence of an optimal trajectory for any initial state, a closed graph property for the map which associates to each point on the network the set of optimal trajectories starting from that point, 
Euler-Lagrange conditions for the optimal control, the characterization of the value function of the  optimal control problem as the generalized viscosity solution of an Hamilton-Jacobi problem posed on the network with suitable conditions at the vertex (the definition of generalized viscosity solution will be recalled),  the local or global Lipschitz regularity  of the value function. The second part of the paper deals with relaxed equilibria for MFGs on the network. The existence of the latter is proved using  Kakutani's fixed point theorem applied to a suitable multivalued map, which requires in particular a  closed graph property. 
To any relaxed equilibrium, it is then possible to associate a family of time-dependent probability measures  on the state space $(m(t))_t$ and the value function $u$ of a suitable optimal control problem involving $m$. All the results of the first part of the paper apply to the latter optimal control problem. In particular, some regularity properties of $u$ can be deduced. It is also possible to prove that $m$ solves a continuity equation in a  weak sense and to give information on the propagation of its singularities.  The pair $(u,m)$ is named a {\sl mild solution} of the MFG, see \cite{CC}.

This paper is organized as follows. The remaining part of Section \ref{intro} contains the description of the geometry  and the definition of some notations. Section~\ref{sec:OC} is devoted to optimal control problems. In particular, we obtain the existence of an optimal trajectory for every starting point, a closed graph property for the map that associates to each point the set of optimal trajectories, and study the value function (mainly, its characterization as the viscosity solution of a HJ problem on the network and some regularity properties). Section~\ref{sect:MFGequil} concerns deterministic MFGs on the junction. Relying on the results of Section~\ref{sec:OC}, we prove the existence of a relaxed MFG equilibrium and study the related mild solutions.
\subsection{Notations}\label{sec:notations}

Throughout this paper, the notation $C_b$ means {\sl continuous and bounded}.
\paragraph{The junction.}
We adopt the notations of ~\cite{AOT}. In the whole paper, the state space is  a {\sl junction} in $\R^d$ with $N$ ($N>1$) semi-infinite straight {\sl edges}, denoted by $(J_i)_{i=1,\dots, N}$. Let the edge $J_i$ be the closed half-line $\R^+ e_i$, and 
the vectors $e_i$ be two by two distinct unit vectors in $\R^d$.
The {\sl junction} $\cG$ is obtained by gluing the half-lines $J_i$ at the origin $O$:
  \begin{displaymath}
  \cG=\bigcup_{i=1}^N J_i.
\end{displaymath}
For a vector $\xi$ aligned with a given $e_i$, we set $\bar \xi=\xi\cdot e_i$. \\
The geodetic distance $d(x,y)$ between two points $x,y$ of $\cG$ is 
\begin{displaymath}
    d(x,y)=\left\{  \begin{array}[c]{ll}
|x-y|\quad \hbox{if } x,y \hbox{ belong to the same edge } J_i \\
|x|+|y|  \quad \hbox{if } x,y \hbox{ belong to different edges } J_i \hbox{ and } J_j.
  \end{array}\right.
\end{displaymath}

If $\varphi$ is a function defined on $J_i$, 
we will sometimes use the same notation $\varphi$ for the function $\R^+\ni \bar x\mapsto \varphi(\bar x e_i)$.
\paragraph{Gradient of a function.}
Let  $C^1(\cG)$ be  the set of continuous functions $\varphi\in C(\cG)$ such that, 
for every $i=1,\dots,N$, the restriction of $\varphi$ to the edge~$J_i$, $\varphi_{\mid J_i}$ belongs to $C^1(\R^+)$; moreover, for $\varphi\in C^1(\cG)$, we set
\begin{equation}\label{eq:def_deriv}
D\varphi(x)=\left\{\begin{array}{ll}
D \varphi_{\mid J_i}&\qquad\textrm{if }x\in J_i\setminus\{O\},\\
\left(D \varphi_{\mid J_1},\dots,D \varphi_{\mid J_N}\right)&\qquad\textrm{if }x=O.
\end{array}\right.
\end{equation}
Observe that $D\varphi(x)$ is $1$-dimensional when the point~$x$ lies in the interior of a given edge while it is $N$-dimensional when $x$ coincides with the vertex~$O$.

In a similar manner, let $C^1(\cG\times [0,T])$ be  the set of continuous functions $\varphi\in C(\cG\times [0,T])$ such that for any $1\le i\le N$, the restriction  $\varphi_{\mid J_i\times [0,T]}$ belongs to $C^1(J_i\times[0,T])$.

\section{Deterministic optimal control on networks}\label{sec:OC}
We consider  optimal control problems on $\cG$ with horizon~$T>0$ and different running costs in the edges and at the vertex. The set of controls, the dynamics and the running cost associated to a given edge $J_i$ are respectively denoted by $A_i$,  $\widetilde F_i$ and  $\tilde \ell_i$. For the sake of simplicity, we shall focus on the case where $\tilde f_i=\alpha $, i.e. the agent directly chooses  its velocity,   and where the running cost is $\tilde \ell_i(x,t,\alpha)=\ell_i(x,t)+|\alpha|^2/2$ (it depends separately on the control and on the state variable). However, what follows may be easily extended to a more general setting, namely
\begin{itemize}
\item a  network instead of a simple junction
\item  functions $\tilde f_i$ with a linear or sublinear growth at  infinity and such that $\tilde f_i(A_i)$ contains a neighborhood of $0$ (strong controllability assumption)
\item running costs which depend separately on the control and the state variable and are strongly convex in the control.
\end{itemize}
More precisely, we make the following assumptions:
\begin{description}
\item{[H0]} In order to avoid confusion between the control sets, we set $A_i=\{i\}\times \R$  for $i=0,\dots,N$.  Hence, the sets $A_i$ are disjoint. We set $A=\bigcup_{i=0}^N A_i$. For $a=(i,\bar a)\in A$, we set $|a|=|\bar a|$ and, with an abuse of notations, we shall write indifferently $\bar a e_i$ and  $(i,\bar a)$.
\\ 
Let $f_i: J_i\times A_i \to \R$ be defined by $f_i(x,a)=\bar a$ for $a=(i,\bar a)$. We will use the notation $F_i(x)$ for the set $\{f_i(x,a) e_i, a\in A_i\}=\R e_i$ for $x\in J_i$ ($i=1,\dots,N$). We also set $F_0(O)=\{0_{\R^d}\}$.
\item{[H1]} For $i=1,\dots,N$, the running costs $\ell_i$ belong to $C_b(J_i\times[0,T])$. Let us also introduce a specific cost for staying at the origin, namely $\ell_*:[0,T]\rightarrow \R$, continuous and bounded.\\
For $i=1,\dots,N$, the terminal costs $g_i$ belong to $C_b(J_i)$. Let $g_*$ be a fixed number.
\end{description}

In the remaining part of Section \ref{sec:OC}, we will always assume that
the costs satisfy the minimal hypotheses made in this paragraph and will not repeat them. We will specify when additional hypotheses are needed.

Let us now recall a general version of Filippov implicit function lemma, which will be useful to prove Theorem~\ref{sec:optim-contr-probl} below. For the proof, we refer the reader to \cite{MW}.
\begin{theorem}\label{sec:optim-contr-probl-1}
Let $I \subset \R $ be an interval and $\gamma: I\to \R^d \times \R^d$ be a measurable function. Let $A$ be a metric space. Let $K$ be a closed subset of $\R^d\times  A$ and $\Psi: K\to  \R^d \times \R^d$ be continuous.
Assume that $\gamma(I)\subset \Psi(K)$, then there is a measurable function $\Phi: I\to K$ such that
\begin{displaymath}
\Psi\circ \Phi(t)=\gamma(t) \quad  \hbox{for a.a. } t\in I.
\end{displaymath}
\end{theorem}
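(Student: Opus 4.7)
The plan is to recognize the statement as a measurable selection problem and solve it via the classical measurable selection theorems. I would introduce the set-valued map
\[
F(t) := \{k \in K : \Psi(k) = \gamma(t)\}, \qquad t \in I.
\]
The hypothesis $\gamma(I) \subset \Psi(K)$ guarantees $F(t) \neq \emptyset$ for every $t$, and continuity of $\Psi$ together with closedness of $K$ in $\R^d \times A$ ensures each $F(t)$ is closed in $K$. Any measurable selection $\Phi$ of $F$ will then satisfy $\Psi \circ \Phi(t) = \gamma(t)$, which is the desired conclusion, so the task reduces to producing such a selection.

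The main step is verifying that the graph
\[
\mathrm{Gr}(F) = \{(t,k) \in I \times K : \Psi(k) - \gamma(t) = 0\}
\]
is a measurable subset of $I \times K$. Since $\gamma$ is Lebesgue-measurable and $\Psi$ is continuous, the map $(t,k) \mapsto \Psi(k) - \gamma(t) \in \R^d \times \R^d$ is Carath\'eodory (continuous in $k$ for each $t$, measurable in $t$ for each $k$) and therefore jointly $\cL \otimes \cB$-measurable, so $\mathrm{Gr}(F)$ is measurable as the preimage of $\{0\}$. The projection of $\mathrm{Gr}(F)$ onto $I$ is all of $I$ by the covering hypothesis, and Aumann's measurable selection theorem then yields a Lebesgue-measurable $\Phi: I \to K$ with $(t, \Phi(t)) \in \mathrm{Gr}(F)$ for almost every $t$, which is the identity $\Psi \circ \Phi = \gamma$ a.e.

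The main technical subtlety lies in the generality of $A$: the Kuratowski--Ryll-Nardzewski theorem is cleanest when the target is Polish, whereas here $A$ is only a metric space. This is not a real obstruction because every Lebesgue-measurable function into a metric space is essentially separably valued, so after modifying $\gamma$ on a null set I may replace $K$ by $K \cap \Psi^{-1}(\overline{\gamma(I)})$, which sits inside a separable (and, upon taking closures, Polish) sub-metric subspace of $\R^d \times A$; there the selection theorem applies directly. A more hands-on route, which is apparently the one used in \cite{MW}, is to approximate $\gamma$ by simple functions via Lusin's theorem, construct piecewise-constant selections $\Phi_n$ by countable choice, and then pass to a limit using closedness of $K$ together with continuity of $\Psi$ to recover a measurable selection almost everywhere.
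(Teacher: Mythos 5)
The paper offers no proof of this statement: it is quoted as a known generalization of Filippov's lemma and the proof is entirely delegated to the reference \cite{MW}. So your argument is not competing with one in the text, and the only question is whether your selection-theoretic route is itself complete. Its skeleton is sound and, in the only situation where the theorem is used in the paper, it closes: there $A=\bigcup_{i=0}^N\{i\}\times\R$, so $K$ is a closed subset of a Polish space, the map $(t,k)\mapsto \Psi(k)-\gamma(t)$ is Carath\'eodory with separable metric second argument and hence jointly $\cL\otimes\cB$-measurable, the graph of $F$ is measurable with nonempty closed fibres, and the von Neumann--Aumann theorem (completeness of the Lebesgue $\sigma$-algebra plus a Souslin target) yields $\Phi$. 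This is a legitimate alternative to reproducing the Lusin-approximation argument of \cite{MW}.

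The genuine gap is in your reduction of the general metric $A$ to this separable case. First, the essential-separable-valuedness remark concerns $\gamma$, whose range $\R^d\times\R^d$ is already separable; the separability that is actually needed is that of $K$. Second, the set $K\cap\Psi^{-1}\bigl(\overline{\gamma(I)}\bigr)$ need \emph{not} lie in a separable subspace: continuous preimages of separable sets can be non-separable (e.g.\ $\Psi$ constant on a non-separable fibre). Separability of $K$ is not cosmetic here --- it is required both for joint measurability of a Carath\'eodory map and for Aumann/Kuratowski--Ryll-Nardzewski --- and without it the conclusion can actually fail as stated: take $A=[0,1]$ with the discrete metric, $K=\{0\}\times A$, $\Psi(0,a)=\iota(a)$ for an isometric embedding $\iota:[0,1]\to\R^d\times\R^d$, and $\gamma=\iota$ on $I=[0,1]$; then $\Phi(t)=(0,t)$ is forced a.e., yet $\Phi^{-1}(\R^d\times S)$ differs from $S$ by a null set for every $S\subset[0,1]$, and $\R^d\times S$ is open, so choosing $S$ non-Lebesgue-measurable shows no measurable selection exists. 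The honest fix is to add a separability (or Souslin) hypothesis on $K$ --- harmless for this paper and consistent with the hypotheses under which \cite{MW} actually operate --- or to restrict to the concrete $A$ of assumption [H0]; your proof is then complete, but as written the passage from ``metric space'' to ``Polish'' does not go through.
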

Let us introduce the set
\begin{equation}
\label{eq:1}
M=\left\{(x,a):\; x\in \cG;\quad  a\in A_i  \hbox{ if } x\in J_i\backslash \{O\},  \hbox{ and } a\in A   \hbox{ if } x =O\right \}.
\end{equation}
Note that $M$ is closed. Moreover, since the sets $A_i$ are disjoint, for each $(x,a)\in M$, there exist a unique $i\in\{1,\dots,N\}$ and a unique $\overline a\in\R$ such that $(x,a)=\left(x,(i,\bar a)\right)$. Let the function $f$ be defined on $M$ by 
\begin{displaymath}
  f(x,a)=\left\{
\begin{array}[c]{ll}
f_i(x,a) e_i,\quad &\hbox{ if } x\in J_i\backslash \{O\}, \\
f_i(O,a) e_i, \quad &\hbox{ if } x=O \hbox{ and } a\in A_i,\, i\ne 0, \\
0_{\R^d}, \quad &\hbox{ if } x=O \hbox{ and } a\in A_0,
\end{array}
\right.
\end{displaymath}
for $(x,a)\in M$.
Since the sets $A_i$ are disjoint,  $f $ is continuous on $M$. Let $\widetilde F(x)$ be defined by
\begin{displaymath}
\widetilde F(x)=\left\{ 
\begin{array}{ll}
F_i(x)\quad  &\hbox{if } x \in J_i\backslash\{O\} ,\\
\cup_{i=0}^N F_i(O) \quad  &\hbox{if }x =O.
\end{array}
\right. 
\end{displaymath}
For $x\in \cG$, let the set of admissible paths starting from $x$ be 
\begin{equation}
\label{eq:2}
Y_{x,0}=\left\{ y_x\in W^{1,2}([0,T]; \cG)\;:\;\left|
\begin{array}[c]{ll}
\dot y_x(t)   \in \widetilde F(y_x(t))  ,\quad& \hbox{for a.e. } t\in [0,T],\\
y_x(0)=x.
\end{array}\right.  \right\}.
\end{equation}

\begin{theorem}\label{sec:optim-contr-probl}
If [H0] and [H1] hold, then
\begin{enumerate}
\item For any $x\in \cG$, $Y_{x,0}$ is nonempty
\item For any $x\in \cG$, for any $y_x\in Y_{x,0}$, there exists a measurable function $\Phi:[0,T]\to M$, $\Phi=(\phi_1,\phi_2)$ such that
\begin{displaymath}
\begin{array}{c}
\phi_2=(i,\bar \phi_2),\, \textrm{with }\bar \phi_2\in\R ,\, \textrm{when } \phi_1\in J_i\setminus\{O\} \\(y_x(s),\dot y_x(s))= ( \phi_1(s), f (\phi_1(s),\phi_2(s))) ,\quad \hbox{for a.e. }s,
\end{array}
\end{displaymath}
which means in particular that $y_x$ is a continuous representation of $\phi_1$
\item Almost everywhere in $[0,T]$,
\begin{displaymath}
\dot y_x(s)= \sum_{i=1}^N 1_{\{ y_x(s)\in J_i\backslash \{O\} \}}  \bar \phi_2(s) e_i
\end{displaymath}
\item  Almost everywhere on $\{s: y_x(s)=O\}$, $f(O, \phi_2(s))=0$.
\end{enumerate}
\end{theorem}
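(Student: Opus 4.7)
\emph{Parts 1 and 2.} For Part~1, the constant trajectory $y_x(t)\equiv x$ belongs to $W^{1,2}([0,T];\cG)$, starts at $x$, and has $\dot y_x=0$ a.e.; since $0\in F_0(O)$ and $0\in F_i(y)=\R e_i$ for every $y\in J_i$, one has $0\in\tilde F(y_x(t))$ everywhere, hence $y_x\in Y_{x,0}$. For Part~2, I would apply the Filippov lemma (Theorem~\ref{sec:optim-contr-probl-1}) with $I=[0,T]$, $K=M$, $\gamma(s)=(y_x(s),\dot y_x(s))$ and $\Psi(x,a)=(x,f(x,a))$. Closedness of $M$ and continuity of $\Psi$ were already recorded right after the definition of $M$; to secure the inclusion $\gamma([0,T])\subset\Psi(M)$ I would choose the $L^2$-representative of $\dot y_x$ equal to $0$ on the null set where the defining inclusion $\dot y_x\in\tilde F(y_x)$ may fail. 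The lemma then produces a measurable $\Phi=(\phi_1,\phi_2):[0,T]\to M$ with $\Psi\circ\Phi=\gamma$ a.e., so $\phi_1=y_x$ a.e.\ (hence $y_x$ is a continuous representative of $\phi_1$) and $\dot y_x(s)=f(\phi_1(s),\phi_2(s))$ a.e. The disjointness of the $A_i$ built into the definition of $M$ forces $\phi_2(s)\in A_i$ whenever $\phi_1(s)\in J_i\setminus\{O\}$, yielding the stated form $\phi_2(s)=(i,\bar\phi_2(s))$.

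\emph{Parts 3 and 4.} Set $E:=\{s\in[0,T]:y_x(s)=O\}$. On the complement $\{s:y_x(s)\in J_i\setminus\{O\}\}$, the formula $f(\cdot,(i,\bar a))=\bar a e_i$ combined with Part~2 gives $\dot y_x(s)=\bar\phi_2(s)e_i$ a.e. On $E$ I would invoke the classical level-set fact: any $u\in W^{1,2}([0,T];\R^d)$ has $\dot u=0$ a.e.\ on every set on which $u$ is constant (applied componentwise to the Euclidean coordinates of $y_x$). This yields $\dot y_x=0$ a.e.\ on $E$, which is exactly what is needed to complete the formula in Part~3. Substituting back into $\dot y_x=f(y_x,\phi_2)$ from Part~2 then gives $f(O,\phi_2(s))=0$ a.e.\ on $E$, i.e.\ Part~4.

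\emph{Main obstacle.} The delicate point is Part~2: Filippov's geometric hypotheses have to be matched with the non-smooth topology of $\cG$ at the junction, and the disjoint-union structure of the control set $A=\bigcup_{i=0}^N A_i$ is precisely what ensures that $M$ is closed and that $f$ (hence $\Psi$) is continuous across~$O$. Apart from this, the only non-routine ingredient is the Sobolev level-set lemma used to handle~$E$; it is standard but indispensable, since without it Parts~3 and~4 would both fail.
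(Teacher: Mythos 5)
Your proposal is correct and follows essentially the same route as the paper: Part 1 via the constant trajectory (using $0\in\tilde F(x)$), Part 2 by applying the Filippov implicit function lemma with exactly the choices $K=M$, $\gamma(s)=(y_x(s),\dot y_x(s))$, $\Psi(x,a)=(x,f(x,a))$, and Parts 3--4 via Stampacchia's theorem (your ``Sobolev level-set lemma'') on the set $\{s:y_x(s)=O\}$. No substantive differences.
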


\begin{proof}
The proof of point 1 is easy, because $0\in \widetilde F(x)$ for every $x\in\cG$.\\
The proof of point 2 is a consequence of Theorem \ref{sec:optim-contr-probl-1}, with $K=M$, $I=[0,T]$, $\gamma(s)=(y_x(s),\dot y_x(s))$ and 
$\Psi(x,a)=(x,f(x,a))$. \\
Point 2 implies
\begin{displaymath}
\dot y_x(s)= \sum_{i=1}^N \car_{\{ y_x(s)\in J_i\backslash \{O\} \}} \bar \phi_2(s) e_i +  \car_{\{ y_x(s) = O\}}  f( O, \phi_2(s)),
\end{displaymath}
and from Stampacchia's theorem, $f( O, \phi_2(s)) =0$ almost everywhere in $\{s: y_x(s) = O\}$. This yields points 3 and 4.
\end{proof}
\begin{remark}\label{rmk:2.2}
It is worth noticing that in Theorem \ref{sec:optim-contr-probl}, a solution $y_x$ can be associated with several control laws $\phi_2$ which may be different even on sets with positive measure. Actually, for a.e. $s\in\{s\in[0,T]\mid y_x(s)\in J_i\setminus\{O\}\}$, the control~$\phi_2(s)$ is uniquely defined as $\phi_2(s)=\dot y_x(s)$ and belongs to $\R e_i$ (for $i=1,\dots,N$). On the other hand, for a.e. $s\in\{s\in[0,T]\mid y_x(s)=O\}$,  the control~$\phi_2(s)$ is $0$ by Stampacchia theorem, and it can be arbitrarily chosen in any $A_i$, for $i=0,\dots, N$.
\end{remark}

For any $x\in\cG$ and $t_1,t_2\in[0,T]$ with $t_1<t_2$, consider the set of admissible {\it trajectories} (namely, pairs made of controls and paths) on the interval~$[t_1,t_2]$ which start from~$x$ at~$t_1$:
\begin{equation} \label{eq:3}
\Gamma_{t_1,t_2}[x]=\left\{
\begin{array}[c]{ll}
(y_x,\alpha)  \ds \in L^2([t_1,t_2],M): & y_x\in W^{1,2} ([t_1,t_2];\cG), \\
&\ds y_x(s)=x+\int_{t_1}^s f(y_x(\tau),\alpha(\tau)) d\tau \quad  \hbox{in }[t_1,t_2]
\end{array}\right\}.
\end{equation}
For simplicity, when $t_2=T$, we write $\Gamma_{t_1}[x]$ instead of~$\Gamma_{t_1,T}[x]$ and, when $t_2=T$ and $t_1=0$, we drop the subscript: $\Gamma[x]=\Gamma_{0,T}[x]$. 

Finally, the set of all admissible trajectories starting at time $t=0$ is defined as follows:
\begin{equation}
\label{eq:04}
\Gamma=\bigcup\limits_{x\in\cG}\Gamma[x].
\end{equation}

\begin{remark}[concatenation of two admissible trajectories]\label{rmk:concat}
For $0\leq t_1\leq t_2\leq t_3\leq T$ and $x\in\cG$, if $(y_1,\alpha_1)\in\Gamma_{t_1,t_2}[x]$ and $(y_2,\alpha_2)\in\Gamma_{t_2,t_3}[y_1(t_2)]$, the trajectory $(\tilde y,\tilde \alpha)$ defined by
\begin{equation*}
\tilde y(s)=\left\{\begin{array}{ll}
y_1(s)&\quad\textrm{for }s\in[t_1,t_2]\\y_2(s)&\quad\textrm{for }s\in[t_2,t_3]
\end{array}\right.\qquad\textrm{and}\qquad
\tilde \alpha(s)=\left\{\begin{array}{ll}
\alpha_1(s)&\quad\textrm{for }s\in[t_1,t_2]\\\alpha_2(s)&\quad\textrm{for }s\in[t_2,t_3]
\end{array}\right.
\end{equation*}
belongs to~$\Gamma_{t_1,t_3}[x]$.
\end{remark}

\paragraph{The cost functional.}
For $t\in [0,T]$, the cost associated to the trajectory $ (y_x, \alpha)\in \Gamma_t[x]$ is 
\begin{multline}\label{eq:46}
J_t(x;(y_x,\alpha) )=\int_t^T\left[\sum_{i=1}^N \ell_i(y_x(\tau),\tau)\car_{y_x(\tau)\in J_i\setminus\{O\}}+\ell_O(\tau)\car_{y_x(\tau)=O}\right] d\tau\\+\int_t^T\frac{|\alpha(\tau)|^2}{2}\, d\tau+g(y_x(T))
\end{multline}
where 
\begin{eqnarray}\label{eq:460}
\ell_O(\tau)&=&\min\{\ell_*(\tau),\min_{i=1,\dots,N}\ell_i(O,\tau)\}\\ \label{eq:461}
g(y)&=& \sum_{i=1}^N g_i(y)\car_{y\in J_i\setminus\{O\}}+\min\{g_*,\min\limits_{i=1,\dots,N}g_i(O)\}\car_{y=O},
\end{eqnarray}
recalling that $g_*$ and $\ell_*$ are introduced in assumption ($H_1$).
For brevity, defining
\begin{equation}\label{eq:462}
L(x,t)=\sum_{i=1}^N \ell_i(x,t)\car_{x\in J_i\setminus\{O\}}+\ell_O(t)\car_{x=O}\qquad \forall (x,t)\in\cG\times [0,T],
\end{equation}
enables one to write
\[
J_t(x;(y_x,\alpha) )=\int_t^T \left(L(y_x(\tau),\tau)+\frac{|\alpha(\tau)|^2}{2}\right)\, d\tau+g(y_x(T)).
\]
\begin{remark}
The arguments below would also apply for  costs of the form
\begin{multline*}
J_t(x;(y_x,\alpha) )=\int_t^T\left[\sum_{i=1}^N \ell_i(y_x(\tau),\tau)\car_{y_x(\tau)\in J_i\setminus\{O\}}+\sum_{i=0}^N\ell_i(O,\tau)\car_{y_x(\tau)=O,\alpha(\tau)\in A_i}\right] d\tau\\+\int_t^T\frac{|\alpha(\tau)|^2}{2}\, d\tau+g(y_x(T)),
\end{multline*}
where we have set $\ell_0(O,\tau)=\ell_*(\tau)$.
\end{remark}
\paragraph{The value function.}
The value function of the optimal control problem is 
\begin{equation}\label{eq:4}
u(x,t)= \inf_{( y, \alpha)\in \Gamma_t[x]}  J_t(x;( y, \alpha) ).
\end{equation}
Set
\begin{equation}\label{eq:gamma_opt}
\Gamma^{\rm{opt}}_t[x]=\Bigl\{(y, \alpha)\in\Gamma_t[x]\;:\; J_t(x;(y,\alpha))=\min_{(\hat y,\hat \alpha)\in \Gamma_t[x]} J_t(x;(\hat y,\hat \alpha))\Bigr\}.
\end{equation}
For simplicity, we drop the subscript  when $t=0$: $\Gamma^{\rm{opt}}[x]=\Gamma^{\rm{opt}}_t[x]$.

\begin{remark}\label{rmk:bound_contr}
The  value function~$u$ is bounded. Indeed, the trajectory associated to the control $\alpha\equiv 0$ is admissible and provides an upper bound for the value function,  because  the costs $\ell_i$ and  $g$ are bounded. From this, it stems that the optimal controls, if they exist, are uniformly bounded in $L^2(0, T)$.
\end{remark}
\begin{remark}[restriction of optimal trajectories]\label{rmk:restr_OC}
 For $(y,\alpha)\in \Gamma^{\rm{opt}}_t[x]$ and $\bar t \in [t,T]$, $(y_{\mid[\bar t,T]},\alpha_{\mid[\bar t,T]})\in \Gamma^{\rm{opt}}_{\bar t}[y(\bar t)]$. Indeed, assume by contradiction that there exists a trajectory $(\bar y,\bar \alpha)\in \Gamma^{\rm{opt}}_{\bar t}[y(\bar t)]$ such that $J_{\bar t}(y(\bar t);(\bar y,\bar \alpha))<J_{\bar t}(y(\bar t);(y_{\mid[\bar t,T]},\alpha_{\mid[\bar t,T]}))$. Then, by Remark~\ref{rmk:concat}, the concatenation $(\tilde y,\tilde\alpha)$ of~$(y,\alpha)$ with $(\bar y,\bar \alpha)$, defined by
\begin{equation*}
\tilde y(s)=\left\{\begin{array}{ll}
y(s)&\quad \textrm{for }s\in[t,\bar t]\\ \bar y(s)&\quad \textrm{for }s\in[\bar t,T]
\end{array}\right., \quad \hbox{ and} \quad
\tilde \alpha(s)=\left\{\begin{array}{ll}
\alpha(s)&\quad \textrm{for }s\in[t,\bar t]\\ \bar \alpha(s)&\quad \textrm{for }s\in[\bar t,T]
\end{array}\right.
\end{equation*}
belongs to $\Gamma_t[x]$ and consequently there holds
\begin{eqnarray*}
u(x,t)&=&J_t(x; (y,\alpha))=\int_t^{\bar t} \left(L(y(\tau),\tau)+\frac{|\alpha(\tau)|^2}{2}\right)\, d\tau+J_{\bar t}(y(\bar t); (y_{\mid[\bar t,T]},\alpha_{\mid[\bar t,T]}))\\
&>&\int_t^{\bar t} \left(L(y(\tau),\tau)+\frac{|\alpha(\tau)|^2}{2}\right)\, d\tau+J_{\bar t}(y(\bar t); (\bar y,\bar \alpha))=J_t(x; (\tilde y,\tilde \alpha)),
\end{eqnarray*}
which contradicts the optimality of~$(y,\alpha)$.
\end{remark}
\begin{remark}\label{rmk:dpp=}
From Remark~\ref{rmk:restr_OC}, we deduce that for any $(y,\alpha)\in \Gamma^{\rm{opt}}_t[x]$, there holds
\begin{equation*}
u(x,t)=u(y(\bar t),\bar t)+\int_t^{\bar t} \left(L(y(\tau),\tau)+\frac{|\alpha(\tau)|^2}{2}\right)d\tau\qquad \forall \bar t\in[t,T].
\end{equation*}
\end{remark}
\begin{remark}\label{rmk:concat_opt}
The concatenation of two optimal trajectories yields an optimal trajectory. More precisely, for any $(y,\alpha)\in \Gamma^{\rm{opt}}_t[x]$, $\hat t\in(t,T)$ and $(\hat y,\hat \alpha)\in \Gamma^{\rm{opt}}_{\hat t}[y(\hat t)]$, the concatenation~$(y_0,\alpha_0)$ of $(y,\alpha)$ and $(\hat y,\hat \alpha)$ belongs to $\Gamma^{\rm{opt}}_t[x]$.
Indeed, from Remark~\ref{rmk:dpp=}, 
\begin{eqnarray*}
u(x,t)&=&u(y(\hat t),\hat t)+\int_t^{\hat t} \left(L(y(\tau),\tau)+\frac{|\alpha(\tau)|^2}{2}\right)d\tau\\
&=&\int_{\hat t}^T \left(L(\hat y(\tau),\tau)+\frac{|\hat \alpha(\tau)|^2}{2}\right)d\tau+g(\hat y(T))+\int_t^{\hat t} \left(L(y(\tau),\tau)+\frac{|\alpha(\tau)|^2}{2}\right)d\tau\\
&=& J_t(x;(y_0,\alpha_0)),
\end{eqnarray*}
i.e. $(y_0,\alpha_0)$ is optimal for $u(x,t)$.
\end{remark}

\begin{lemma}\label{lemma:u_g}
If [H0] and [H1] hold, then for any $x\in \cG$: $\lim_{t\to T^-} u(x,t)=g(x)$.
\end{lemma}
\begin{proof}
Fix $x\in \cG$. Since $y$ corresponding to control $\alpha=0$ is admissible,
\begin{equation*}
u(x,t)\leq \int_t^T L(y(\tau),\tau)\, d\tau+g(x),
\end{equation*}
and because $L$ is bounded, this implies that $\limsup_{t\to T^-} u(x,t)\leq g(x)$.

On the other hand, for any $\epsilon\in(0,1)$, let $(y_t^\epsilon,\alpha_t^\epsilon)$ be an $\epsilon$-optimal trajectory for $u(x,t)$. The same arguments as in Remark~\ref{rmk:bound_contr} yield that there exists a constant $C$ (independent of~$t$ and of~$\epsilon$) such that: $\|\alpha_t^\epsilon\|_{L^2(t,T)}\leq C$ so, in particular, $y_t^\epsilon(\cdot)$ is $1/2$-H\"older continuous with constant~$C$. Hence,
\begin{equation*}
\liminf_{t\to T^-} u(x,t)\geq \liminf_{t\to T^-}\left(\int_t^T L(y_t^\epsilon(\tau),\tau)\, d\tau+g(y_t^\epsilon(T))\right)-\epsilon=g(x)-\epsilon.
\end{equation*}
Letting $\epsilon$ tend to $0$ yields the desired result.
\end{proof}

\subsection{Existence of optimal trajectories}

\begin{proposition}\label{prp:ex_OT}
For each point $(x,t)\in \cG\times[0,T]$, there exists an optimal trajectory, namely there exists  $(y_x, \alpha)\in \Gamma_t[x]$ such that $u(x,t)=J_t(x;( y_x, \alpha))$. In other words, $\Gamma^{\rm{opt}}[x]\ne \emptyset$.
\end{proposition}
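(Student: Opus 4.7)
The plan is to apply the direct method of the calculus of variations to a minimizing sequence in~$\Gamma_t[x]$. Let $(y_n,\alpha_n)\in\Gamma_t[x]$ satisfy $J_t(x;(y_n,\alpha_n))\to u(x,t)$. Using Remark~\ref{rmk:2.2} I would first replace each $\alpha_n$ on the set $\{y_n=O\}$ by a control with values in~$A_0$; since Theorem~\ref{sec:optim-contr-probl}(4) gives $\dot y_n=0$ almost everywhere on this set, the curve~$y_n$ is unchanged and the cost does not increase, yet now $|\alpha_n(s)|=|\dot y_n(s)|$ a.e. By Remark~\ref{rmk:bound_contr}, $\|\alpha_n\|_{L^2}=\|\dot y_n\|_{L^2}$ is bounded, so $y_n$ is bounded in $W^{1,2}([t,T];\R^d)$. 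Passing to a subsequence (not relabeled), Ascoli--Arzel\`a yields $y_n\to y$ uniformly on $[t,T]$, while $\dot y_n\rightharpoonup \dot y$ weakly in $L^2([t,T];\R^d)$.

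The main technical step is to show that $y$ is admissible. Closedness of~$\cG$ gives $y(s)\in\cG$ for every~$s$, and $y(t)=x$. To check $\dot y(s)\in\tilde F(y(s))$ a.e., set $S_i:=\{s\in[t,T]:y(s)\in J_i\setminus\{O\}\}$; this is open by continuity of~$y$. On any compact $K\subset S_i$ uniform convergence and the bound $d(y(s),O)\ge\delta_K>0$ for $s\in K$ force $y_n(s)\in J_i\setminus\{O\}$ for all $s\in K$ once $n$ is large, so by Theorem~\ref{sec:optim-contr-probl}(3) the restriction $\dot y_n|_K$ lies in $L^2(K;\R e_i)$ eventually. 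Since $L^2(K;\R e_i)$ is a closed linear subspace of $L^2(K;\R^d)$, it is weakly closed, and the weak limit satisfies $\dot y\in\R e_i$ a.e. on~$K$; exhausting $S_i$ by such compacts gives $\dot y\in\R e_i=F_i(y)$ a.e. on~$S_i$. On $\{y=O\}$, Stampacchia's theorem gives $\dot y=0\in\tilde F(O)$ a.e. Hence $\dot y\in\tilde F(y)$ a.e., and Theorem~\ref{sec:optim-contr-probl-1} (Filippov, applied on $[t,T]$) produces a measurable~$\alpha$ with $(y,\alpha)\in\Gamma_t[x]$; I again choose $\alpha\in A_0$ on $\{y=O\}$ so that $|\alpha|=|\dot y|$ a.e. I expect this weak-closure argument to be the main obstacle: it relies crucially on the fact that the fibres~$\R e_i$ are linear subspaces (not merely cones), so that weak $L^2$ convergence respects them.

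It remains to pass to the $\liminf$ in the cost. Weak lower semi-continuity of the $L^2$-norm yields
\[
\int_t^T \frac{|\alpha(\tau)|^2}{2}\,d\tau=\int_t^T\frac{|\dot y(\tau)|^2}{2}\,d\tau\le\liminf_n\int_t^T\frac{|\alpha_n(\tau)|^2}{2}\,d\tau.
\]
For the remaining terms I would show that the integrand $L(\cdot,\tau)$ in~\eqref{eq:462} and the terminal cost~$g$ in~\eqref{eq:461} are lower semi-continuous on~$\cG$: both are continuous on each edge, and at the vertex any sequence $x_n\to O$ with $x_n\in J_{i_n}$ satisfies $\liminf \ell_{i_n}(x_n,\tau)\ge\min_i\ell_i(O,\tau)\ge\ell_O(\tau)=L(O,\tau)$, and similarly for~$g$ with $g_*$ replacing~$\ell_*$. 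Combined with the uniform convergence $y_n\to y$, boundedness of~$L$, and Fatou's lemma, this gives $\int_t^T L(y,\tau)\,d\tau+g(y(T))\le\liminf_n\bigl(\int_t^T L(y_n,\tau)\,d\tau+g(y_n(T))\bigr)$. Adding this to the kinetic inequality yields $J_t(x;(y,\alpha))\le\liminf_n J_t(x;(y_n,\alpha_n))=u(x,t)$, and the reverse inequality holds by definition of~$u$, so $(y,\alpha)\in\Gamma^{\rm opt}_t[x]$, which is therefore nonempty.
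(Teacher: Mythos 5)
Your proof is correct and follows essentially the same route as the paper: the direct method on a minimizing sequence, compactness from the uniform $L^2$ bound on the controls, identification of the limit control edge by edge (plus Stampacchia at the vertex), weak lower semicontinuity of the kinetic term, and the key observation that the vertex costs $\ell_O$ and $g(O)$ are defined as minima, which makes $L(\cdot,\tau)$ and $g$ lower semicontinuous on $\cG$. Your packaging of the running/terminal cost estimate as ``l.s.c.\ plus Fatou'' is a cleaner formulation of the paper's explicit five-term decomposition, but the underlying argument is the same.
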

\begin{proof}
Fix $(x,t)\in\cG\times[0,T]$ and consider a minimizing sequence $(y^n, \alpha^n)\in \Gamma_t[x]$, i.e. $u(x,t)=\lim_{n\to\infty} J_t(x;( y^n, \alpha^n) )$.
From Remark~\ref{rmk:bound_contr}, there exists a constant $C$, independent of $n$, such that
\begin{equation}\label{Nic1}
\int_t^T\frac{|\alpha^n(\tau)|^2}{2}\, d\tau\leq C.
\end{equation}
This implies that $y^n$ are uniformly bounded and uniformly $1/2$-H\"older continuous, because
\begin{equation}\label{Nic2}
y^n(s)=x+\int_t^s \sum_{i=1}^N \car_{\{ y^n(\tau)\in J_i\backslash \{O\} \}} \bar \alpha^n(\tau) e_i\, d\tau.
\end{equation}

There exist  $\alpha\in L^2([t,T];\R^d)$ and $y_x\in C^{1/2}([t,T];\R^d)$ such that, possibly up to the  extraction of subsequences,  $\sum_{i=1}^N \car_{\{ y^n(\cdot)\in J_i\backslash \{O\} \}} \bar \alpha^n(\cdot) e_i$ converge to $\alpha$ in the weak topology of $L^2([t,T];\R^d)$ and $y_n$ uniformly converge to $y_x$. In particular, letting $n\to\infty$ in \eqref{Nic2} yields
\begin{equation}\label{Nic3}
y_x\in W^{1,2}([t,T];\R^d), \quad\textrm{with }\quad y_x(s)=x+\int_t^s \alpha(\tau)\, d\tau\in \cG,
\end{equation}
because $\cG$ is closed.
We now claim that
\begin{equation}\label{claim1}
(y_x, \alpha)\in \Gamma_t[x].
\end{equation}
To obtain \eqref{claim1}, it suffices to prove that $\alpha$ is an admissible control, i.e. that $(y_x(s),\alpha(s))\in M$ for a.e. $s\in(t,T)$. To this end, let us argue differently whether $y_x(s)$ coincides or not with~$O$.\\
Consider $s\in(t,T)$ such that $y_x(s)\in J_i\setminus\{O\}$ for some $i=1,\dots,N$.
Since the $y^n$ are uniformly $1/2$-Holder continuous and uniformly converge to~$y_x$, we deduce that, for $\varepsilon>0$ sufficiently small and for any~$n$ sufficiently large, there holds
\[
y^n(\tau)\in J_i\setminus\{O\}\qquad \forall \tau\in(s-\varepsilon,s+\varepsilon).
\]
In particular, for $n$ sufficiently large, $\alpha^n(\tau)=\bar \alpha^n(\tau) e_i$ for $\tau\in(s-\varepsilon,s+\varepsilon)$. Letting $n\to\infty$, we conclude that $\alpha(\tau)$ is aligned with $e_i$ for $\tau\in(s-\varepsilon,s+\varepsilon)$.\\
Define the compact set
\begin{equation}\label{eq:defE}
    E=\{s\in(t,T):\, y_x(s)=O\}.
\end{equation} 
From \eqref{Nic3}, Stampacchia's theorem yields that $\alpha(s)=0$ for a.a. $s\in E$. \\
Hence,  we may write for instance $\alpha=0e_1$ in $E$. The claim~\eqref{claim1} is proved.

Let us now check that $(y_x,\alpha)$ is an optimal trajectory, i.e. that 
\begin{equation}\label{claim2}
u(x,t)=J_t(x;( y_x, \alpha)).
\end{equation}
In order to prove \eqref{claim2}, it is useful to decompose $u(x,t)$ as follows
\begin{eqnarray}
u(x,t)
&=& \lim_{n\to\infty}\left[\int_t^T\frac{|\alpha^n(\tau)|^2}{2} d\tau +\sum_{i=1}^5 I_i\right], \label{eq:Jyn}
\end{eqnarray}
where
\begin{eqnarray*}
I_1&=& \int_t^T\sum_{i=1}^N \ell_i(y^n(\tau),\tau)\car_{y^n(\tau)\in J_i\setminus\{O\}} \car_{y_x(\tau)\in J_i\setminus\{O\}} d\tau,\\
I_2&=& \int_t^T\sum_{i=1}^N \ell_i(y^n(\tau),\tau)\car_{y^n(\tau)\in J_i\setminus\{O\}} \car_{y_x(\tau)\in \cG\setminus J_i} d\tau,\\
I_3&=& \int_t^T\sum_{i=1}^N \ell_i(y^n(\tau),\tau)\car_{y^n(\tau)\in J_i\setminus\{O\}} \car_{y_x(\tau)=O} d\tau,\\
I_4&=&\int_t^T\ell_O(\tau)\car_{y^n(\tau)=O} d\tau,\\
I_5&=&g(y^n(T)),
\end{eqnarray*}
and study separately the different contributions in the right hand side of~\eqref{eq:Jyn}.
It is well known that the convergence in the weak topology of $L^2([t,T];\R^d)$ entails
\begin{equation}\label{part1}
\int_t^T\frac{|\alpha(\tau)|^2}{2}d\tau\leq \liminf_{n\to\infty} \int_t^T\frac{|\alpha^n(\tau)|^2}{2}d\tau.
\end{equation}
Concerning $I_1$, the uniform convergence of $y^n$ to $y_x$ as $n\to\infty$ and the continuity of $\ell_i$ ensure that, for any $\tau\in [t,T]$,
\[
\ell_i(y^n(\tau),\tau)\car_{y^n(\tau)\in J_i\setminus\{O\}} \car_{y_x(\tau)\in J_i\setminus\{O\}}\to \ell_i(y_x(\tau),\tau)\car_{y_x(\tau)\in J_i\setminus\{O\}}\qquad \textrm{as }n\to\infty.
\]
Since the $\ell_i$'s are bounded, the dominated convergence theorem yields
\begin{equation}\label{part2_1}
I_1\to \int_t^T\sum_{i=1}^N \ell_i(y_x(\tau),\tau)\car_{y_x(\tau)\in J_i\setminus\{O\}} d\tau\qquad \textrm{as }n\to\infty.
\end{equation}
As for $I_2$, again the uniform convergence of $y^n$ to $y_x$ and the continuity of $\ell_i$ ensure that the integrand tends to zero as $n\to \infty$. Again the dominated convergence theorem yields 
\begin{equation}\label{part2_2}
I_2\to 0\qquad\textrm{as }n\to\infty.
\end{equation}
Let us now consider the term $I_5$ and  argue differently whether $y_x(T)$ coincides or not with $O$. If $y_x(T)\in J_i\setminus\{O\}$ for some $i\in\{1,\dots,N\}$ then, the uniform convergence of $y^n$ to $y_x$ and the continuity of~$g_i$ entail $g(y^n(T))=g_i(y^n(T))\to g_i(y_x(T))=g(y_x(T))$ as $n\to\infty$. If $y_x(T)=O$, again by the uniform convergence of $y^n$ to $y_x$ and by the definition of~$g$ in~\eqref{eq:461}, for any $\varepsilon>0$, we get $g(y^n(T))\geq g(O)-\varepsilon=g(y_x(T))-\varepsilon$ for $n$ sufficiently large. In both cases, 
\begin{equation}\label{part5}
\liminf_{n\to\infty}I_5\geq  g(y_x(T)).
\end{equation}
On the other hand, 
\begin{eqnarray*}
I_3+I_4&=&\int_t^T\left[\sum_{i=1}^N \ell_i(y^n(\tau),\tau)\car_{y^n(\tau)\in J_i\setminus\{O\}} +\ell_O(\tau)\car_{y^n(\tau)=O}\right]\car_{y_x(\tau)=O} d\tau\\
&&+\int_t^T\ell_O(\tau)\car_{y^n(\tau)=O}\car_{y_x(\tau)\ne O} d\tau.
\end{eqnarray*}
Observe that $\car_{y^n(\cdot)=O}\car_{y_x(\cdot)\ne O}\to 0$  as $n\to\infty$. Hence, from the dominated convergence theorem, 
\begin{equation*}
\int_t^T\ell_O(\tau)\car_{y^n(\tau)=O}\car_{y_x(\tau)\ne O} d\tau \to 0 \quad \textrm{as }n\to\infty.
\end{equation*}
Assume for a while that
\begin{multline}\label{claim_fin}
\liminf_{n\to\infty}\int_t^T\left[\sum_{i=1}^N \ell_i(y^n(\tau),\tau)\car_{y^n(\tau)\in J_i\setminus\{O\}} + \ell_O(\tau)\car_{y^n(\tau)=O}\right]\car_{y_x(\tau)=O} d\tau\\ \geq \int_t^T \ell_O(\tau)\car_{y_x(\tau)=O} d\tau.
\end{multline}
From \eqref{eq:Jyn}, \eqref{part1} and \eqref{claim_fin},
\begin{equation*}
u(x,t)\geq \int_t^T\left[\frac{|\alpha(\tau)|^2}{2}+ \sum_{i=1}^N \ell_i(y_x(\tau),\tau)\car_{y_x(\tau)\in J_i\setminus\{O\}}+ \ell_O(\tau)\car_{y_x(\tau)=O}\right] d\tau+g(y_x(T))
\end{equation*}
which is equivalent to \eqref{claim2}.

There remains to prove \eqref{claim_fin}.  Recall that the set $E$ has been defined in \eqref{eq:defE}.  Since $y^n$ uniformly converge to $y_x$, $\|y^n-y_x\|_{L^\infty(E)}$ is arbitrary  small for $n$ sufficiently large. Then the continuity of $\ell_i$ implies that for any $\varepsilon>0$, 
\[
\ell_i(y^n(\tau),\tau)> \ell_O(\tau)-\varepsilon, \quad \quad\forall \tau\in E,
\]
for $n$ sufficiently large, 
which implies inequality~\eqref{claim_fin}.
\end{proof}
\begin{remark}\label{rmk:quasi_dpp}
The following statement can be seen as the ``converse'' of Remark~\ref{rmk:dpp=}. If there exist $t_1\in[0,T]$ and $(y_1,\alpha_1)\in \Gamma_{t,t_1}[x]$ such that
\begin{equation*}
u(x,t)=u(y_1(t_1),t_1)+\int_t^{t_1} \left(L( y_1(\tau),\tau)+\frac{| \alpha_1(\tau)|^2}{2}\right)\, d\tau,
\end{equation*}
then, there exists $(y,\alpha)\in \Gamma^{\rm{opt}}_t[x]$ with  $(y_1,\alpha_1)=(y,\alpha)$ on $(t,t_1)$. Indeed, consider the concatenation $(y,\alpha)$ of $(y_1,\alpha_1)$ with $(\bar y,\bar \alpha)$ where $(\bar y,\bar \alpha)$ is any trajectory in $\Gamma^{\rm{opt}}_{t_1}[y_1(t_1)]$. from Remark~\ref{rmk:concat}, $(y,\alpha)$ is admissible for $u(x,t)$. Since the above equality can be written 
\begin{eqnarray*}
u(x,t)&=&\int_{t_1}^T \left(L(\bar y(\tau),\tau)+\frac{|\bar \alpha(\tau)|^2}{2}\right)\, d\tau+g(\bar y(T))
+\int_t^{t_1} \left(L( y_1(\tau),\tau)+\frac{| \alpha_1(\tau)|^2}{2}\right)\, d\tau\\
&=&J_t(x;(y,\alpha)),
\end{eqnarray*}
 $(y,\alpha)$ is optimal for $u(x,t)$.
\end{remark}


%
%

\subsection{First properties}
This paragraph is devoted to the dynamic programming principle and the continuity of~$u$.  Let us stress that the structure of the control set 
plays a crucial role in what follows.
\begin{proposition}[Dynamic programming principle]\label{prp:prop_vf}
Assume [H0] and [H1]. For any $(x,t)\in\cG\times[0,T]$ and $\bar t\in[t,T]$, there holds 
\begin{equation}\label{eq:DPP}
u(x,t)= \inf_{(y,\alpha)\in \Gamma_{t,\bar t}[x]}\left\{
u(y(\bar t),\bar t)+\int_t^{\bar t} \left(L(y(\tau),\tau)+\frac{|\alpha(\tau)|^2}{2}\right)d\tau
\right\}.
\end{equation}
\end{proposition}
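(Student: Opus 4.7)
The plan is to prove the two inequalities ``$\le$'' and ``$\ge$'' separately, using the additivity of the cost over sub-intervals, the existence of optimal trajectories established in Proposition~\ref{prp:ex_OT}, and the concatenation property of Remark~\ref{rmk:concat}.

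\medskip
\noindent\textbf{Inequality $u(x,t)\ge \text{RHS}$.} The plan is to pick any $(y,\alpha)\in\Gamma_t[x]$ and decompose the cost. Its restriction $(y_{\mid[t,\bar t]},\alpha_{\mid[t,\bar t]})$ is in $\Gamma_{t,\bar t}[x]$ and $(y_{\mid[\bar t,T]},\alpha_{\mid[\bar t,T]})$ is in $\Gamma_{\bar t}[y(\bar t)]$. Splitting the integral in the definition~\eqref{eq:46} of $J_t$ at $\bar t$ and applying the definition~\eqref{eq:4} of $u$ at $(y(\bar t),\bar t)$ yields
\[
J_t(x;(y,\alpha))=\int_t^{\bar t}\!\left(L(y(\tau),\tau)+\tfrac{|\alpha(\tau)|^2}{2}\right)d\tau+J_{\bar t}(y(\bar t);(y_{\mid[\bar t,T]},\alpha_{\mid[\bar t,T]}))\ge\int_t^{\bar t}\!\left(L+\tfrac{|\alpha|^2}{2}\right)d\tau+u(y(\bar t),\bar t).
\]
Since $(y_{\mid[t,\bar t]},\alpha_{\mid[t,\bar t]})\in\Gamma_{t,\bar t}[x]$, the right-hand side is bounded below by the infimum in~\eqref{eq:DPP}. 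Taking the infimum over $(y,\alpha)\in\Gamma_t[x]$ gives the desired inequality.

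\medskip
\noindent\textbf{Inequality $u(x,t)\le \text{RHS}$.} Here the plan is to build a full admissible trajectory on $[t,T]$ by concatenating an arbitrary element of $\Gamma_{t,\bar t}[x]$ with an optimal continuation. Fix $(y,\alpha)\in\Gamma_{t,\bar t}[x]$. By Proposition~\ref{prp:ex_OT} applied at the point $(y(\bar t),\bar t)$, there exists $(\tilde y,\tilde\alpha)\in\Gamma^{\rm{opt}}_{\bar t}[y(\bar t)]$ with $J_{\bar t}(y(\bar t);(\tilde y,\tilde\alpha))=u(y(\bar t),\bar t)$. By Remark~\ref{rmk:concat}, the concatenation $(\hat y,\hat\alpha)$ of $(y,\alpha)$ on $[t,\bar t]$ and $(\tilde y,\tilde\alpha)$ on $[\bar t,T]$ belongs to $\Gamma_t[x]$, and
\[
u(x,t)\le J_t(x;(\hat y,\hat\alpha))=\int_t^{\bar t}\left(L(y(\tau),\tau)+\tfrac{|\alpha(\tau)|^2}{2}\right)d\tau+u(y(\bar t),\bar t).
\]
Taking the infimum over $(y,\alpha)\in\Gamma_{t,\bar t}[x]$ concludes this direction.

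\medskip
I do not expect any substantive obstacle. The availability of exact (not merely $\varepsilon$-) optimal trajectories from every initial datum, granted by Proposition~\ref{prp:ex_OT}, spares us from an approximation argument in the ``$\le$'' direction; the ``$\ge$'' direction is essentially tautological once one observes the additivity $J_t=\int_t^{\bar t}+J_{\bar t}$ which follows directly from the form~\eqref{eq:46} of the cost. The only point requiring a line of verification is that the restriction and concatenation operations preserve admissibility, but this is exactly Remark~\ref{rmk:restr_OC} (trivially modified, since it is the admissibility rather than the optimality of the pieces that is used here) and Remark~\ref{rmk:concat}.
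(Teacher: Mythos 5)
Your proof is correct and follows essentially the same route as the paper's: the ``$\ge$'' direction by splitting the cost of an arbitrary trajectory at $\bar t$ and using that its restriction to $[\bar t,T]$ is admissible, and the ``$\le$'' direction by concatenating an arbitrary element of $\Gamma_{t,\bar t}[x]$ with an exact optimal continuation supplied by Proposition~\ref{prp:ex_OT} and Remark~\ref{rmk:concat}. No gaps.
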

\begin{proof}
$(i)$. For any $(y,\alpha)\in \Gamma_{t}[x]$, there holds
\begin{eqnarray*}
J_t(x,(y,\alpha))&=&\int_t^{\bar t}\left(L(y(\tau),\tau)+\frac{|\alpha(\tau)|^2}{2}\right)d\tau +J_{\bar t}(y(\bar t),(y_{\mid [\bar t,T]},\alpha_{\mid [\bar t,T]}))\\
&\geq &  
\int_t^{\bar t}\left(L(y(\tau),\tau)+\frac{|\alpha(\tau)|^2}{2}\right)d\tau +u(y(\bar t),\bar t)\\
&\geq & \inf_{(z,\beta)\in \Gamma_{t,\bar t}[x]}\left\{
u(z(\bar t),\bar t)+\int_t^{\bar t} \left(L(z(\tau),\tau)+\frac{|\beta(\tau)|^2}{2}\right)d\tau
\right\},
\end{eqnarray*}
where $(y_{\mid [\bar t,T]},\alpha_{\mid [\bar t,T]})$ is the restriction of the trajectory $(y,\alpha)$ in the interval~$[\bar t,T]$. Taking the infimum in $(y,\alpha)\in \Gamma_{t}[x]$ leads to \eqref{eq:DPP} with the $\ge$ sign  instead of $=$.

Let us now prove the reverse inequality. Consider $(y,\alpha)\in \Gamma_{t,\bar t}[x]$. For $(\bar y,\bar \alpha)\in\Gamma^{\rm{opt}}_{\bar t}[y(\bar t)]$ (whose existence is ensured by Proposition~\ref{prp:ex_OT}),
\begin{equation*}
u(y(\bar t),\bar t)+\int_t^{\bar t}\left(L(y(\tau),\tau)+\frac{|\alpha(\tau)|^2}{2}\right)d\tau =
J_t(x,(\tilde y,\tilde \alpha))\geq u(x,t)  ,
\end{equation*}
where $(\tilde y,\tilde \alpha)$ is the concatenation of the trajectory $(y,\alpha)$ on $[t,\bar t]$ and of the trajectory $(\bar y,\bar \alpha)$ on $[\bar t,T]$. Recall from Remark~\ref{rmk:concat} that $(\tilde y,\tilde \alpha)\in \Gamma_t[x]$.
The proof is completed by taking the infimum in $(y,\alpha)\in \Gamma_{t,\bar t}[x]$.
\end{proof}
\begin{proposition}[Continuity of the value function]\label{prp:U_cont} If [H0] and [H1] hold, then the function~$u$ is continuous in~$\cG\times[0,T)$.
\end{proposition}
\begin{proof}
Consider $(x_1,t_1), (x_2,t_2)\in \cG\times[0,T_1)$, with $T_1<T$ and $\delta:=d(x_1,x_2)<(T-T_1)/2$. Without any loss of generality, we may assume that both~$x_1$ and~$x_2$ belong to the same edge, say~$e_1$. Consider $(y_2,\alpha_2)\in\Gamma^{\rm{opt}}_{t_2}[x_2]$. Consider the trajectory that starts in~$x_1$ at time~$t_1$ and corresponds to the control
\begin{equation*}    
\alpha_1(s)=
\left\{\begin{array}{ll}
\left\{\begin{array}{ll}
e_1&\quad\textrm{if } x_1\leq x_2\\
-e_1&\quad\textrm{if } x_1> x_2\\
\end{array}
\right\}&\quad \textrm{for }s\in[t_1,t_1+\delta],\\
\frac{T-t_2}{T-t_1-\d}\alpha_2 \left(\frac{T-t_2}{T-t_1-\d}s-\frac{T(\delta+t_1-t_2)}{T-t_1-\d}\right)&\quad \textrm{for }s\in(t_1+\delta,T],\end{array}\right.
\end{equation*}
thus
\begin{equation*}
y_1(s)=\left\{\begin{array}{ll}
x_1+\frac{x_2-x_1}{\d}(s-t_1) &\quad\textrm{for }s\in[t_1,t_1+\d],\\
x_2+\int_{t_1+\d}^s\frac{T-t_2}{T-t_1-\d}\alpha_2 \left(\frac{T-t_2}{T-t_1-\d}\tau-\frac{T(\delta+t_1-t_2)}{T-t_1-\d}\right)\, d\tau &\quad\textrm{for }s\in[t_1+\d,T].
\end{array}\right.
\end{equation*}
Observe that, for $s\in[t_1+\d,T]$, there holds
\begin{equation*}
y_1(s)=x_2+\int_{t_2}^{\frac{T-t_2}{T-t_1-\d}s-\frac{T(\delta+t_1-t_2)}{T-t_1-\d}}\alpha_2 (\theta))\, d\theta = y_2\left(\frac{T-t_2}{T-t_1-\d}s-\frac{T(\delta+t_1-t_2)}{T-t_1-\d}\right),
\end{equation*}
and  that $(y_1,\alpha_1)\in \Gamma_{t_1}(x_1)$ with $y_1(T)=y_2(T)$. On the other hand,
\begin{eqnarray}\notag
\|\alpha_1\|^2_{L^2(t_1,T)}&=&  
\delta+\int_{t_1+\d}^T\left(\frac{T-t_2}{T-t_1-\d}\right)^2\left|\alpha_2 \left(\frac{T-t_2}{T-t_1-\d}s-\frac{T(\delta+t_1-t_2)}{T-t_1-\d}\right)\right|^2\, d\tau\\ \notag
&=&
\delta +\frac{T-t_2}{T-t_1-\d} \|\alpha_2\|^2_{L^2(t_2,T)}\\\label{eq:14giu_2}
&=&
\delta+\|\alpha_2\|^2_{L^2(t_2,T)}+\frac{t_1-t_2+\d}{T-t_1-\d} \|\alpha_2\|^2_{L^2(t_2,T)}.
\end{eqnarray}
Let us estimate 
\begin{equation*}
J_{t_1}(x_1;(y_1,\alpha_1))-J_{t_2}(x_2;(y_2,\alpha_2))=\sum_{i=1}^4 I_i,
\end{equation*}
where
\begin{equation*}
\begin{array}{ll}
\ds I_1=\int_{t_1}^{t_1+\d}L(y_1(\tau),\tau)\,  d\tau ,&\quad\quad
\ds I_2= \frac{\|\alpha_1\|^2_{L^2(t_1,T)}-\|\alpha_2\|^2_{L^2(t_2,T)}}{2},\\
\ds I_3=\int_{t_1+\d}^T L(y_1(\tau),\tau)d\tau ,&\quad\quad
\ds I_4=-\int_{t_2}^{T} L(y_2(\tau),\tau)\, d\tau,
\end{array}
\end{equation*}
recalling that $y_1(T)=y_2(T)$.
From the boundedness of the running cost and~\eqref{eq:14giu_2}, there holds $|I_1|\leq K\d$  for some constant~$K$, and
\[
|I_2|\leq \frac{\d}2 +\frac{\|\alpha_2\|^2_{L^2(t_2,T)}}2\frac{|t_1-t_2|+\d}{T-T_1}.
\]
On the other hand, after a change of variable, 
\begin{equation*}
\begin{array}{ll}
I_3&=\ds \int_{t_1+\d}^T L\left(y_2\left(\frac{T-t_2}{T-t_1-\d}\tau-\frac{T(\delta+t_1-t_2)}{T-t_1-\d}\right), \tau\right)\, d\tau\\
&\ds =\frac{T-t_1-\d}{T-t_2}\int_{t_2}^T L\left(y_2(\theta), \frac{T-t_1-\d}{T-t_2}\theta+\frac{T(\delta+t_1-t_2)}{T-t_2}\right)\, d\theta,
\end{array}
\end{equation*}
 which implies that
\begin{equation*}
\begin{array}{ll}
|I_3+I_4|&=\ds \left|
\frac{t_2-t_1-\d}{T-t_2}\int_{t_2}^T L\left(y_2(\theta), \frac{T-t_1-\d}{T-t_2}\theta+\frac{T(\delta+t_1-t_2)}{T-t_2}\right)\, d\theta\right.\\
&\ds \qquad\left.
+\int_{t_2}^T \left[ L\left(y_2(\theta), \frac{T-t_1-\d}{T-t_2}\theta+\frac{T(\delta+t_1-t_2)}{T-t_2}\right)-L(y_2(\theta),\theta)\right]\, d\theta
\right|\\
&\ds \leq 
\frac{|t_2-t_1|+\d}{T-T_1}K+T\o\left(\frac{|t_2-t_1|+\d}{T-T_1}2T \right),
\end{array}
\end{equation*}
 where~$\o$ is a  modulus of continuity common to all the costs $\ell_i$ in $B(0,R)$ with $R>|x_1|+|x_2| +\max_{s\in[0,T]}|y_2(s)|$. In conclusion, 
\[
|J_{t_1}(x_1;(y_1,\alpha_1))-J_{t_2}(x_2;(y_2,\alpha_2))|\leq
\tilde K(|t_2-t_1|+\d)+ \tilde \o(|t_2-t_1|+\d)
\]
for a suitable constant~$\tilde K$ (depending only on $T_1$) and a suitable modulus of continuity~$\tilde \o$ (depending on $T_1$, $|x_1|$ and $|x_2|$). From the optimality of~$(y_2,\alpha_2)$, 
\[
\begin{array}{rcl}
u(x_1,t_1)&\leq& J_{t_1}(x_1;(y_1,\alpha_1))\leq J_{t_2}(x_2;(y_2,\alpha_2)) +\tilde K(|t_2-t_1|+\d)+ \tilde \o(|t_2-t_1|+\d)\\
&\leq& u(x_2,t_2)+\tilde K(|t_2-t_1|+\d)+ \tilde \o(|t_2-t_1|+\d).
\end{array}
\]
Reversing the role of~$(x_1,t_1)$ and~$(x_2,t_2)$, we get
\[
|u(x_1,t_1)-u(x_2,t_2)|\leq \tilde K(|t_2-t_1|+\d)+ \tilde \o(|t_2-t_1|+\d),
\]
and the proof is done.
\end{proof}
\begin{remark}[H\"older/Lipschitz continuity]\label{rmk:U_holder}
If the running costs $\ell_i$ are $\theta$-H\"older continuous with respect to time for $\theta =(0, 1]$, the same arguments as above
can be used for proving  that the value function is locally $\theta$-H\"older continuous with respect to $(x,t)$ in $\cG\times [0,T)$.
\end{remark}
The following property will not be used in the remaining part  of the paper.
\begin{lemma}\label{lemma1bis}
Fix $(x,t)\in\cG\times [0,T)$ and $(y,\alpha)\in\Gamma_t[x]$ and  consider a sequence $\{(x_n,t_n)\}_{n\in\N}$, such that $(x_n,t_n)\in\cG\times[0,T)$, $\d'_n=d(x_n,x)+|t_n-t|\to 0$ as $n\to \infty$.
There exists a sequence $\{(y_n,\alpha_n)\}_{n\in\N}$, such that $(y_n,\alpha_n)\in\Gamma_{t_n}[x_n]$
\begin{equation}\label{eq:lemma1bis}
\begin{array}{rl}
(i)&\quad \sup\limits_{[t_n\vee t,T]}d(y_n(\cdot),y(\cdot))\leq \d_n+|t_n-t|+\|\alpha\|_2\sqrt{\d'_n},\qquad y_n(T)=y(T)\\
(ii)&\quad\|\alpha_n\|_2^2\leq \|\alpha\|_2^2+\d'_n\left(1+\frac{\|\alpha\|_2^2}{T-\d'_n}\right)\\
(iii)&\quad \lim\limits_{n\to\infty}J_{t_n}(x_n;(y_n,\alpha_n)) = J_t(x;(y,\alpha)).
\end{array}
\end{equation}
\end{lemma}
\begin{proof}[Proof of Lemma~\ref{lemma1bis}]
We adapt the arguments in the proof of Lemma~\ref{lemma1}. It is enough to focus on the situation in which all the points $x_n$ and $x$ belong to the edge $e_1$,  and  all the $t_n$ are either smaller or larger than $t$. Set $\d_n=d(x_n,x)$.

Case 1: $t_n\leq t$, $\forall n\in\N$. Let us introduce the control
\begin{equation*}
\alpha_n(s)=
\left\{\begin{array}{ll}
0,&\quad \textrm{for }s\in[t_n,t],\\
\left\{\begin{array}{ll}
e_1,&\quad\textrm{if }\bar x_n\leq \bar x,\\
-e_1,&\quad\textrm{if }\bar x_n> \bar x,\\
\end{array}
\right.&\quad \textrm{for }s\in[t,t+\delta_n],\\
\frac{T-t}{T-t-\d_n}\alpha \left(s\frac{T-t}{T-t-\d_n}-\d_n\frac{T}{T-t-\d_n}\right),&\quad \textrm{for }s\in(t+\delta_n,T],\end{array}\right.
\end{equation*}
and let $y_n$ be the corresponding path starting from $x_n$ at time $t_n$. Clearly, $(y_n,\alpha_n)\in\Gamma_{t_n}[x_n]$, and
\begin{equation*}
y_n(s)=\left\{\begin{array}{ll}
x_n,&\qquad\textrm{for }s\in[t_n,t],\\
x_n +(x-x_n)\d_n^{-1}(s-t),&\qquad\textrm{for }s\in[t,t+\d_n],\\
y\left(s\frac{T-t}{T-t-\d_n}-\d_n\frac{T}{T-t-\d_n}\right),&\qquad\textrm{for }s\in[t+\d_n,T].
\end{array}\right.
\end{equation*}
 The bounds in~\eqref{eq:lemma1bis}-$(i)$ and~$(ii)$ are obtained with the same arguments as above. Moreover, 
\[
J_{t_n}(x_n;(y_n,\alpha_n))-J_{t}(x;(y,\alpha))=\sum_{i=1}^5I_i,
\]
where, for $i=1,\dots,4$, the terms $I_i$ are analogous to the corresponding ones in~\eqref{eq:rescal4}, while $I_5=\int_{t_n}^tL(x_n,0,\tau)\, d\tau$. Then $|I_5|\leq K|t-t_n|$ for a suitable constant $K$, since  the costs $\ell_i$ are bounded functions. The same calculations as in the proof of Lemma~\ref{lemma1} lead to the desired result.

Case 2: $t_n\geq t$, $\forall n\in\N$. It is clear that $t+\d'_n=t_n+\d_n$. Consider the control
\begin{equation*}
\alpha_n(s)=
\left\{\begin{array}{ll}
\left\{\begin{array}{ll}
e_1,&\quad\textrm{if }\bar x_n\leq \bar x,\\
-e_1,&\quad\textrm{if }\bar x_n> \bar x,\\
\end{array}
\right. &\quad \textrm{for }s\in[t_n,t+\delta'_n],\\
\frac{T-t}{T-t-\d'_n}\alpha \left(s\frac{T-t}{T-t-\d'_n}-\d'_n\frac{T}{T-t-\d'_n}\right),&\quad \textrm{for }s\in(t+\delta'_n,T],\end{array}\right.
\end{equation*}
and let $y_n$ be the corresponding path starting from $x_n$ at time $t_n$. Then $(y_n,\alpha_n)\in\Gamma_{t_n}[x_n]$ and
\begin{equation*}
y_n(s)=\left\{\begin{array}{ll}
x_n +(x-x_n)\d_n'^{-1}(s-t),&\qquad\textrm{for }s\in[t,t+\d'_n],\\
y\left(s\frac{T-t}{T-t-\d_n}-\d_n\frac{T}{T-t-\d_n}\right),&\qquad\textrm{for }s\in[t+\d'_n,T].
\end{array}\right.
\end{equation*}
The desired result is obtained with the same calculations as in the proof of Lemma~\ref{lemma1}.
\end{proof}

\subsection{Euler-Lagrange  conditions}
\label{sec:euler_lagrange}
Below, we address situations in which it is possible to write  the Euler-Lagrange conditions for an optimal trajectory.
They will consist of a family of  differential equations along with a  condition at the horizon.
The following lemma deals with the Euler-Lagrange condition 
in time intervals $[t_1,t_2]\subset (0,T)$ for which an optimal trajectory lies in the interior of a given edge. 

%
%

\begin{lemma}\label{lemma:EL}
Consider $i\in\{1,\dots,N\}$, and 
assume that the function $\ell_i$ is differentiable with respect to its first argument with $\partial_x\ell_i\in C(J_i\times [0,T])$. 
Consider any $(x,t)\in \cG\times[0,T]$ and any $(y,\alpha)\in \Gamma^{\rm{opt}}_t[x]$ such that, for some $t_1,t_2\in(t,T)$, there holds
\begin{equation*}
y(s)\in J_i\setminus \{O\}, \qquad \forall s\in [t_1,t_2].
\end{equation*}
Then, the control $\alpha$ is $C^1$ in $(t_1,t_2)$ and 
\begin{equation}\label{EL_giugno}
\alpha'(s)= \partial_x\ell_i(y(s),s),\qquad \forall s\in (t_1,t_2).
\end{equation}
\end{lemma}
\begin{proof}
Fix $\tilde t\in(t_1,t_2)$ and 
consider $\alpha_1\in L^2(t,T)$, with $\alpha_1(s)\in \R e_i$ a.e. in $(t_1,t_2)$, $\alpha_1(s)=0$ for $s\notin (t_1,t_2)$  and $\int_{t_1}^{t_2}\alpha_1 ds=0$. 
In $(t_1,t_2)$, both $\alpha$ and $\alpha_1$ are aligned with $e_i$ and can be written  $\alpha(s)=\bar \alpha(s)e_i$ and $\alpha_1(s)=\bar \alpha_1(s)e_i$ with $\bar \alpha(s), \bar \alpha_1(s)\in \R$. 
For $h\in\R$, with $|h|$ sufficiently small, the control $\alpha_h(\cdot):=\alpha(\cdot)+h\alpha_1(\cdot)$ is admissible for $(x,t)$ because  $ \left|y|_{t_1,t_2]}\right|$ is bounded from below by a positive number. 
Let $y_h$ denote the trajectory corresponding to the control $\alpha_h$. It is clear that $y_h(T)=y(T)$. Then,  since   $(y,\alpha)$ is optimal,
\begin{equation}\label{10giu_1}
0\leq \frac{J_t(y_h,\alpha_h)-J_t(y,\alpha)}{h}
=\int_{t_1}^{t_2}\left(\bar \alpha(s)\bar \alpha_1(s)+\frac{h\bar \alpha_1(s)^2}2+ \frac{\ell_i(y_h(s),s)-\ell_i(y(s),s)}{h}\right)ds.
\end{equation}
Since  $y_h(s)=y(s)+h\int_{t_1}^s\alpha_1(\tau)d\tau$ for $s\in[t_1,t_2]$, we deduce from the regularity of~$\ell_i$ with respect to the state variable that
\begin{equation*}
\int_{t_1}^{t_2}\frac{\ell_i(y_h(s),s)-\ell_i(y(s),s)}{h}\, ds=
\int_{t_1}^{t_2} \partial_x\ell_i(y(s),s) \int_{t_1}^s\bar\alpha_1(\tau)d\tau ds +o(1),
\end{equation*}
where $o(1)$ is a function of $h$ that tends to $0$ as $h\to 0$. Integrating by parts the last integral  and observing that
$\int_{t_1}^{t_2}\bar \alpha_1 ds=0$ yields
\begin{equation*}
\int_{t_1}^{t_2}\frac{\ell_i(y_h(s),s)-\ell_i(y(s),s)}{h}\, ds=
- \int_{t_1}^{t_2}\left(\int_{t_1}^{s}\partial_x\ell_i(y(\theta),\theta)d\theta\right) \bar\alpha_1(s)ds.
\end{equation*}
Inserting the latter in~\eqref{10giu_1} and letting $h\to 0$ leads to
\begin{equation*}
0\leq \int_{t_1}^{t_2}\left[\bar \alpha(s)-\int_{t_1}^{s}\partial_x\ell_i(y(\theta),\theta)d\theta \right]\bar \alpha_1(s) ds,
\end{equation*}
for every $\alpha_1$ supported in $[t_1,t_2]$ with $\int_{t_1}^{t_2}\alpha_1 ds=0$. The linearity of the constraint then implies
\begin{equation*}
0=\int_{t_1}^{t_2}\left[\bar \alpha(s)-\int_{t_1}^{s}\partial_x\ell_i(y(\theta),\theta)d\theta \right]\bar \alpha_1(s) ds,
\end{equation*}
i.e. that $s\mapsto \bar \alpha(s)-\int_{t_1}^{s}\partial_x\ell_i(y(\theta),\theta)d\theta$ is orthogonal in $L^2(t_1,t_2) $ to
$V=\{f\in L^2(t_1,t_2):\; \int_{t_1}^{t_2}f=0\} = \R^{\perp_{L^2(t_1,t_2)}}$  . Hence, this function is constant and \eqref{EL_giugno} is proved.
\end{proof}
\begin{remark}\label{rem:EL}
A consequence of \eqref{EL_giugno} is that $\alpha$ is Lipschitz continuous in each interval $[t_1,t_2]\subset [0,T]$ such that $y(t)\ne O$ for $t\in(t_1,t_2)$.
\end{remark}
\begin{remark}\label{rem:EL2}
If we only suppose that for some $p\in [1,\infty]$, $\ell_i(\cdot, t)$ is bounded in $ W^{1,p}_{{ \rm loc}}(J_i)$  uniformly with respect to $t\in [0,T]$,
then $y\in W^{2,p}(t_1,t_2)$ and
\eqref{EL_giugno} holds for almost all $s\in (t_1,t_2)$.
\end{remark}

The following lemma deals with the transversality condition for an  optimal trajectory which stays in the interior of a given edge near the horizon $T$.

%
%

\begin{lemma}\label{lemma:trasver}
We keep the assumptions of Lemma~\ref{lemma:EL} and we also assume  that $g_i\in C^1(J_i)$. Consider any $(x,t)\in \cG\times[0,T]$ and any $(y,\alpha)\in \Gamma^{\rm{opt}}_t[x]$ such that $y(T)\in J_i\setminus \{O\}$. Then, there holds
\begin{equation}\label{trasver_giugno}
\alpha(T)=-\partial_x g_i(y(T)).
\end{equation}
\end{lemma}
\begin{proof}
The  arguments are similar to those in the proof of Lemma~\ref{lemma:EL}.
Since  $t\mapsto y(t)$ is continuous with $y(T)\in J_i\setminus\{O\}$, there exists $\delta>0$ such that $y(s)\in J_i\setminus\{O\}$ for $s\in [T-\delta,T]$. 
Consider $\alpha_1\in L^2(t,T)$ with $\alpha_1(s)\in \R e_i$  a.e. in $(T-\delta,T)$, $\alpha_1(s)=0$ a.e. in $(t,T-\delta)$.
In $(T-\delta,T)$, both $\alpha$ and $\alpha_1$ are aligned with $e_i$ and  we may write $\alpha(s)=\bar \alpha(s)e_i$ and $\alpha_1(s)=\bar \alpha_1(s)e_i$. 
As before, for $h\in\R$ with $|h|$ sufficiently small, the control $\alpha_h(\cdot):=\alpha(\cdot)+h\alpha_1(\cdot)$ is admissible for $(x,t)$. Let $y_h$ be the trajectory corresponding
to the control~$\alpha_h$. We deduce from the optimality of $(y,\alpha)$ that
\begin{equation*}
0\leq 
\int_{T-\delta}^{T}\left(\bar \alpha(s)\bar \alpha_1(s)+\frac{h\bar \alpha_1(s)^2}2+ \frac{\ell_i(y_h(s),s)-\ell_i(y(s),s)}{h}\right)ds + \frac{g_i(y_h(T))-g_i(y(T))}{h}.
\end{equation*}
 Since $y_h(s)=y(s)+h\int_{T-\delta}^s \alpha_1(\tau)d\tau$ for $s\in[T-\delta,T]$, arguing as in the proof of Lemma~\ref{lemma:EL} leads to
\begin{equation*}
\frac{g_i(y_h(T))-g_i(y(T)}{h}=
\partial_x g_i(y(T)) \int_{T-\delta}^T\bar\alpha_1(\tau)d\tau +O(h),
\end{equation*}
and
\begin{align*}
\int_{T-\delta}^{T}\frac{\ell_i(y_h(s),s)-\ell_i(y(s),s)}{h}\, ds=
\int_{T-\delta}^{T} \partial_x\ell_i(y(s),s) \int_{T-\delta}^s\bar\alpha_1(\tau)d\tau ds +O(h)\\
\qquad =
\int_{T-\delta}^{T}\partial_x\ell_i(y(\theta),\theta)d\theta \int_{T-\delta}^{T}\bar\alpha_1(\tau)d\tau
- \int_{T-\delta}^{T}\left(\int_{T-\delta}^{s}\partial_x\ell_i(y(\theta),\theta)d\theta\right) \bar\alpha_1(s)ds,
\end{align*}
where the last equality is obtained after an integration by parts.
Combining the  latter three inequalities and letting $h\to 0$ yield
\begin{multline*}
0\leq 
\left(\int_{T-\delta}^T \partial_x\ell_i(y(\theta),\theta)d\theta+\partial_x g_i(y(T))\right)\int_{T-\delta}^{T}\bar \alpha_1(s)ds+\\
\int_{T-\delta}^{T}\left[\bar \alpha(s) -\int_{T-\delta}^s \partial_x\ell_i(y(\theta),\theta)d\theta\right]\bar \alpha_1(s)ds.
\end{multline*}
Since $y(s)\in J_i\setminus\{O\}$ for $s\in [T-\delta,T]$,  we infer from~\eqref{EL_giugno} that
\begin{equation*}
0\leq \left(\bar \alpha(T)+\partial_x g_i(y(T))\right)\int_{T-\delta}^{T}\bar \alpha_1(s)ds.
\end{equation*}
This yields \eqref{trasver_giugno} since  $\alpha_1$ is arbitrary.
\end{proof}

\subsection{Lipschitz regularity of optimal trajectories}

We now aim at proving that for any $(x,t)\in \cG\times[0,T]$, any trajectory $(y,\alpha)\in \Gamma^{\rm{opt}}_t[x]$ is such that $\alpha$ is bounded in $(t,T)$, with a bound that depends locally uniformly on $x$. 
The essential arguments are  the Euler-Lagrange and the transversality conditions obtained in Section \ref{sec:euler_lagrange} and a key estimate on the initial velocity of an optimal trajectory, locally independent of the starting point, see Lemma \ref{lemma:casi_alphalim} below.

\begin{theorem}\label{optcur_lip}
Assume that for all $i=1,\dots, N$,  $g_i\in C^1(J_i)$ with $\partial_xg_i\in C_b(J_i)$, $\ell_i$ is differentiable with respect to its first argument with $\partial _x \ell_i \in C_b(J_i\times[0,T])$, and let $M_g$, $M_\ell$, $L_g$ and $L_\ell$ be defined by 
\begin{equation}
\label{eq:9999}
\begin{split}
\ds  M_g= \| g\|_{L^\infty(\mathcal G )}, \quad L_g=  \max_{i=1,\dots, N} \| \partial_x g_i\|_{L^\infty( J_i )}, \\
\ds  M_\ell= \| L\|_{L^\infty(\mathcal G \times [t,T] )}, \quad  L_\ell=  \max_{i=1,\dots, N} \| \partial_x \ell_i\|_{L^\infty( J_i\times [t,T] )}. 
\end{split}
\end{equation}
For any $(x,t)\in \cG\times[0,T]$ and for any trajectory $(y,\alpha)\in \Gamma^{\rm{opt}}_t[x]$, the control~$\alpha$ belongs to $L^\infty(t,T)$. Moreover, there exists a positive constant $V$ (depending only on~$L_g$, $M_\ell$, $L_\ell$, $d(x,O)$ and $(T-t)^{-1}$) such that
\begin{equation*}  
\|\alpha\|_\infty \leq V.
\end{equation*}
\end{theorem}
\begin{proof}
Consider a trajectory $(y,\alpha)\in \Gamma^{\rm{opt}}_t[x]$.
Set
\begin{equation}\label{eq:Vstar}
V_*=L_g+(T-t)L_\ell.    
\end{equation}

Let us split the interval $[t,T]$ in order to distinguish the times $s$ for which $y(s)\in J_i\setminus\{O\}$, $i=1,\dots,N$, and $y(s)=O$. More precisely, set
\begin{equation*}
I_0=\{s\in[t,T]:\; y(s)=O\},\qquad I_i=\{s\in[t,T]:\; y(s)\in J_i\setminus\{O\}\}\quad \textrm{for }i=1,\dots,N.
\end{equation*}
Since $y(\cdot)$ is continuous, the set $I_0$ is closed and each $I_i$ can be written as the disjoint union of a (possibly infinite) family of
subintervals of $[t,T]$, open in $[t,T]$.

We aim at bounding $\|\alpha\|_\infty$. For that, we consider the following different cases:
\begin{enumerate}
\item From Stampacchia theorem, $\alpha(s)=0$ for a.e. $s\in I_0$.
\item  Assume that, for some $t_1\in (t,T)$ and for some $i\in\{1,\dots,N\}$, $(t_1, T]\subset I_i$.
This  implies in particular that $y(T)\in J_i\setminus\{O\}$. From the Euler-Lagrange condition \eqref{EL_giugno} 
and the transversality condition \eqref{trasver_giugno}, 
\begin{equation*}
\alpha(s)=-\partial_x g_i(y(T))+\int_{T}^{s}\partial_x\ell_i(y(\tau),\tau)\, d\tau.
\end{equation*}
From the assumptions made on $g_i$ and $\ell_i$, this implies that $\|\alpha\|_{L^\infty(t_1,T)}\le  V_*$.
\item Assume that for some $i\in\{1,\dots,N\}$, $[t,T]\subset I_i$. Then the same argument as in the previous point yield that  $\|\alpha\|_{L^\infty(t,T)}\le V_*$.
\item Assume that, for some $t_1,t_2\in [t,T]$,  $y(t_1)=y(t_2)=O$, and for some $i\in\{1,\dots,N\}$, $(t_1,t_2)\subset I_i$.
From Lemma~\ref{lemma:EL}, the function $s\mapsto \alpha(s)$ is continuous on $(t_1,t_2)$. Then, from standard calculus, we deduce that there exists $t_3\in(t_1,t_2)$ such that $\alpha(t_3)=0$.
Then \eqref{EL_giugno} implies that for $s\in (t_1,t_2)$,
\begin{equation*}
\alpha(s)=\int_{t_3}^{s}\partial_x\ell_i(y(\tau),\tau)\, d\tau,
\end{equation*}
therefore that $\|\alpha\|_{L^\infty(t_1,t_2)}\le V_*$.
\item Assume that, for some $t_1\in (t,T)$,  $y(t_1)=O$,   and for some $i\in\{1,\dots,N\}$, $[t,t_1)\subset I_i$.
 From Remark \ref{rem:EL}, the control $\alpha$ is Lipschitz continuous 
 and the bound \eqref{EL_giugno} holds in $(t,t_1)$.
In particular, $\alpha(t)$ is well defined. Take $\alpha(s)=\bar \alpha(s)e_i$ and $y(s)=\bar y(s)e_i$ for $s\in [t,t_1]$. It is clear that
\begin{equation}\label{eq:stima_EL}
\bar \alpha(t)- L_\ell (s-t)   \leq \bar \alpha(s)\leq \bar \alpha(t)+ L_\ell (s-t)
\qquad\forall s\in[t,t_1).
\end{equation}
We distinguish two subcases
\begin{enumerate}
\item  If $\bar \alpha(t)$ is nonnegative, then since $\bar y(t_1) =0< \bar y(t)$, there exists $t_2: t\le t_2<t_1$ such that $\bar \alpha(t_2)=0$.
  As above $ \bar \alpha(s) =  \int_{t_2} ^s  \partial_x \ell_i(y(\tau),\tau)  d\tau $, which yields $\|\alpha\|_{L^\infty(t,t_1)}\le V_*$
\item  If $\bar \alpha(t)$ is negative, then we can apply  Lemma~\ref{lemma:casi_alphalim} below, which yields the desired  bound on $\|\alpha\|_{L^\infty(t,t_1)}$. 
\end{enumerate}
\end{enumerate}
By using the fact that $[t,T]=\cup_{i=0}^N I_i$, the observations above on $I_0$ and $I_i$,
and by combining all the points above, we get the desired estimate on $\|\alpha\|_{L^\infty(0,T)}$.
\end{proof}

\begin{lemma}\label{lemma:casi_alphalim}
Keeping the assumptions of Theorem~\ref{optcur_lip}, we also assume  that, for some $t_1\in(t,T]$,  $y(t_1)=O$, and for  some $i\in\{1,\dots,N\}$, $y(s)\in J_i\setminus\{O\}$ for $s\in[t,t_1)$, and that $\alpha(t)\cdot e_i<0$. Then, for some positive constant $C$ (depending only on   $(T-t)^{-1}$, $d(x,O)$, $M_\ell$, $L_\ell$, $L_g$ defined in \eqref{eq:9999}), there holds
\[\alpha(s)\cdot e_i\geq -C\qquad\textrm{in }[t,t_1).\]
\end{lemma}

\begin{proof}[Proof of Lemma~\ref{lemma:casi_alphalim}]
Set $x=\bar x e_i$, $\alpha(t)=-\bar v e_i$ with $\bar v>0$, and for any $s\in [t,t_1)$, let $\bar\alpha(s),\bar y (s)$ be the real numbers such that
$\alpha(s)=\bar \alpha(s)e_i$ and $y(s)=\bar y(s)e_i$.

Hence, from Lemma~\ref{lemma:EL},  the claim  is equivalent to the existence of
some positive $C$ (depending only on   $M_\ell$, $L_\ell$, $L_g$, $d(x,O)$ and $(T-t)$), such that
\[\bar v\leq C.\]

From \eqref{EL_giugno}, for $s\in[t,t_1)$ there holds
\begin{equation}\label{eq:stima_EL2}
\left\{
\begin{array}{rl}
(i)&\quad-\bar v-L_\ell(s-t)\leq \bar \alpha(s)\leq -\bar v+L_\ell(s-t),\\
(ii)&\quad\bar x-\bar v(s-t)-\frac{L_\ell(s-t)^2}{2}\leq \bar y(s)\leq \bar x-\bar v(s-t)+\frac{L_\ell(s-t)^2}{2}.
\end{array}\right.
\end{equation}
Let us start by some useful estimates. We claim that, for $\bar v\geq 2L_\ell T$ there holds
\begin{equation}\label{eq:est_t_1}
t+\frac{4\bar x}{5\bar v}\leq t_1\leq t+\frac{4\bar x}{3\bar v}.
\end{equation}
Indeed, the left inequality in~\eqref{eq:stima_EL2}-(ii) with $s=t_1$ yields
\begin{equation*}
\bar x\leq (t_1-t)\left[\bar v+\frac{L_\ell(t_1-t)}{2}\right]\leq (t_1-t)\left[\bar v+\frac{L_\ell T}{2}\right]\leq (t_1-t)\frac{5\bar v}{4}.
\end{equation*}
Analogously, the right inequality in~\eqref{eq:stima_EL2}-(ii) with $s=t_1$ yields
\begin{equation*}
-\bar x\leq (t_1-t)\left[-\bar v+\frac{L_\ell(t_1-t)}{2}\right]\leq (t_1-t)\left[-\bar v+\frac{L_\ell T}{2}\right] \leq -(t_1-t)\frac{3\bar v}{4}.
\end{equation*}
This concludes the proof of \eqref{eq:est_t_1}.

We now claim that, for $\bar v\geq \max\{2L_\ell T,4 \bar x /(3T)\}$, there holds
\begin{equation}\label{eq:est_baralpha}
\bar \alpha(s)\leq -\frac {\bar v}2 \quad \forall s\in[t,t_1)\qquad\textrm{and}\qquad\int_t^{t_1}\frac{\bar \alpha(s)^2}{2}\, ds \geq  \frac{\bar v \bar x}{10}.
\end{equation}
Indeed, observe first that  \eqref{eq:stima_EL2}-(i) entails
\begin{equation*}
\bar \alpha(s)\leq -\bar v+L_\ell(t_1-t) \qquad\forall s\in[t,t_1).
\end{equation*}
From estimate~\eqref{eq:est_t_1} and our choice of $\bar v$, 
\begin{equation*}
\bar \alpha(s)\leq -\bar v+\frac{2\bar x}{3  T}    \le  -\frac{\bar v}2  \qquad\forall s\in[t,t_1).
\end{equation*}
where we have  successively used that $\bar v\geq 2L_\ell T$ and that  $3T \bar v\geq 4 \bar x $. Next, we deduce from the first inequality in \eqref{eq:est_baralpha} and  \eqref{eq:est_t_1} that
\begin{equation*}
\int_t^{t_1}\frac{\bar \alpha(s)^2}{2}\, ds\geq \frac{\bar v^2(t_1-t)}{8}\geq \frac{\bar v \bar x}{10},
\end{equation*}
and \eqref{eq:est_baralpha} is  proved.

We are now going to find estimates on $\bar v$ by proposing 
suitable competitors for the optimal control problem defining $u(x,t)$. Let $V_*$ be the constant defined in \eqref{eq:Vstar}.

If  $\bar v\le  \max\left\{ 2L_\ell T, 4 \bar x /(3T),    40 \bar x /(T-t) , 20 V_*   \right\}$, there is nothing to do. We are left with estimating $\bar v$ in the case when
\begin{equation}\label{eq:10000}
\bar v> \max\left(2L_\ell T, \frac {4 \bar x} {3T},  \frac {40 \bar x} {T-t}, 20 V_*  \right) .
\end{equation}
The arguments below differ  according to the behaviour of $(y,\alpha)$ after time $t_1$.

\noindent{\it Case A: $d(y(s),O)\leq \bar x$ for $s\in[t_1,T]$.}

Recall that the case under focus is when \eqref{eq:10000} holds. Consider the control
\begin{equation*}
\alpha_1(s)=-\bar v/20\, e_i\quad\textrm{in }[t,t+20 \bar x/\bar v ],\qquad
\alpha_1(s)=0\quad\textrm{in }[t+20 \bar x/\bar v,T].
\end{equation*}
Let  $(y_1,\alpha_1)$ be  the corresponding trajectory.  Observe that $(y_1,\alpha_1)$ is admissible for $(x,t)$, so the optimality of $(y,\alpha)$ entails
\begin{eqnarray*}
0\leq J_t(x;(y_1,\alpha_1))-J_t(x;(y,\alpha)) &\leq& 
\int_{t}^{t+ 20 \bar x/\bar v}\left(\frac{\bar v^2}{800}+\ell_i(y_1(s),s)-L(y(s),s)\right) ds\\
&& -\int_{t}^{t_1}\frac{\bar \alpha(s)^2}{2}ds+\int_{t+20 \bar x/\bar v}^T\left(\ell_O(s)-L(y(s),s)\right)ds\\
&& +g(O) -g(y(T)).
\end{eqnarray*}
Since $\bar v> \max\{2L_\ell  T, 4\bar x /(3T)\}$, \eqref{eq:est_baralpha} implies that
\begin{equation}\label{eq:case_A}
0\leq -\frac{3 \bar x \bar v }{40} +40 M_\ell \frac {\bar x}{\bar v} +\int_{t+20\bar x/\bar v}^T\left(\ell_O(s)-L(y(s),s)\right)ds +g(O) -g(y(T)).
\end{equation}
Denoting by $I$ the last integral, \eqref{eq:462} and \eqref{eq:460} yield 
\begin{eqnarray}\label{eq:case_Abis}
I&=&\sum_{i=1}^N\int_{t+20 \bar x/\bar v}^T\left[\ell_O(s)-\ell_i(y(s),s)\right]\car_{y(s)\in J_i\setminus\{O\}}ds\\\notag
&\leq &\sum_{i=1}^N\int_{t+20 \bar x/\bar v}^T\left[\ell_i(0,s)-\ell_i(y(s),s)\right]\car_{y(s)\in J_i\setminus\{O\}}ds\\\notag
&\leq & TL_\ell\bar x,
\end{eqnarray}
where the latter inequality follows from the definition of case $A$. Similarly, $g(O) -g(y(T))\le L_g \bar x$.
Injecting these estimates in~\eqref{eq:case_A}, we get
\begin{equation*}
0\leq  -\frac{3\bar v^ 2 }{40} +(TL_\ell+L_g) \bar v +40 M_\ell ,
\end{equation*}
which implies that $\bar v\le  \frac {20} 3 \left(   \sqrt{ ((TL_\ell+L_g))^2 +12 M_\ell} +(TL_\ell+L_g)\right)$.
We have proven that in case A,
\begin{equation}
\label{eq:10001}
\bar v\le \max\left(    2L_\ell T, \frac {4 \bar x} {3T},  \frac {40 \bar x} {T-t},   20 V_*,  \frac {20} 3 \left(   \sqrt{ ((TL_\ell+L_g))^2 +12 M_\ell} +(TL_\ell+L_g)\right)   \right).
\end{equation}

\noindent{\it Case B: $\exists \tau\in(t_1,T]$ such that $d(y(\tau),O)>  \bar x$.}
Recall that  \eqref{eq:10000} holds.
For later use, set
\begin{eqnarray*}
&&\tau_2=\inf\{\tau\in [t_1,T]:\; d(y(\tau),O)>\bar x\},\\
&&\textrm{$l\in\{1,\dots,N\}$ such that $y(\tau_2)\in J_l\setminus\{O\}$},\\
&&\tau_1=\inf\{s \in [t_1,T]:\; y(\tau)\in J_l\setminus\{O\}\quad \forall \tau\in(s,\tau_2]\}.
\end{eqnarray*}
In other words, $\tau_2$ is the first time larger than $t_1$ at which the trajectory
reaches a distance to the origin greater than $\bar x$ and  $\tau_1$ is the time at which the trajectory enters in $J_l\setminus\{O\}$ and remains there up to time $\tau_2$
(note that the trajectory can also visit $J_l\setminus\{O\}$ before $\tau_1$).

Let us distinguish three subcases.

\noindent{\it Subcase B1: $\tau_1\geq t+ 20 \bar x/\bar v$.}
Consider the control
\begin{equation*}
\alpha_1(s)=-\frac {\bar v}{20} e_i \textrm{ in }[t,t+20 \bar x/\bar v),\qquad 
\alpha_1(s)=0 \textrm{ in }[t+20 \bar x/\bar v, \tau_1),\qquad
\alpha_1(s)=\alpha(s)\textrm{ in }[\tau_1,T],
\end{equation*}
and let $(y_1,\alpha_1)$ be the corresponding trajectory, which is clearly admissible for $(x,t)$. The optimality of $(y,\alpha)$ entails
\begin{multline*}
0\leq J_t(x;(y_1,\alpha_1))-J_t(x;(y,\alpha))\leq 
\int_{t}^{t+20 \bar x/\bar v}\left(\frac{\bar v^2}{800}+\ell_i(y_1(s),s)-L(y(s),s)\right) ds\\
-\int_{t}^{t_1}\frac{\bar \alpha(s)^2}{2}ds+\int_{t+20 \bar x/\bar v}^{\tau_1}\left(\ell_O(s)-L(y(s),s)\right)ds.
\end{multline*}
Then, from \eqref{eq:est_baralpha},
\begin{equation*}
0\leq \left(\frac{\bar v}{40}-\frac{\bar v}{10}\right)\bar x +40\frac{M_\ell}{\bar v}\bar x+\int_{t+20 \bar x/\bar v}^{\tau_1}\left(\ell_O(s)-L(y(s),s)\right)ds.
\end{equation*}
As above, we deduce that
\begin{equation*}
  0\leq -\frac{3\bar v^2 }{40} + TL_\ell \bar v +40M_\ell,
\end{equation*}
which proves that in Subcase B1,
\begin{equation}
\label{eq:10002}
\bar v\le \max\left(    2L_\ell T, \frac {4 \bar x} {3T},  \frac {40 \bar x} {T-t},   20 V_* , \frac {20} 3 \left(   \sqrt{ (TL_\ell)^2 +12 M_\ell} +TL_\ell\right)   \right).
 \end{equation}

\noindent{\it Subcase B2: $\tau_1< t+20 \bar x/\bar v$ and $y(s)\in J_l\setminus\{O\}$ for $s\in (\tau_1,T]$.}
Consider the control
\begin{equation}\label{contr_B2}
\alpha_1(s)=-\frac {\bar v}{20} e_i \textrm{ in }[t,t+20 \bar x/\bar v],\qquad 
\alpha_1(s)=a \alpha(as+b) \textrm{ in }(t+20 \bar x/\bar v, T],
\end{equation}
with
\begin{equation*}
a=\frac{T-\tau_1}{T-t- 20\bar x/\bar v}, \quad \hbox{and} \quad b=T\frac{\tau_1-t-20 \bar x/\bar v}{T-t-20\bar x/\bar v},
\end{equation*}
(note that $a>1$).
Let $(y_1,\alpha_1)$ be the corresponding trajectory. There holds
\begin{equation*}
y_1(s)=(\bar x-\bar v(s-t)/20)e_i\quad \textrm{in }[t,t+20\bar x/\bar v],\qquad 
y_1(s)=y(as+b) \textrm{ in }[t+20 \bar x/\bar v, T].
\end{equation*}
In particular, $(y_1,\alpha_1)$ is admissible for $(x,t)$. 
The optimality of $(y,\alpha)$ entails
\begin{eqnarray}\notag
0&\leq& J_t(x;(y_1,\alpha_1))-J_t(x;(y,\alpha))\\ \notag
&\leq& 
\int_{t}^{t+20 \bar x/\bar v}\left(\frac{\bar v^2}{800}+\ell_i(y_1(s),s)\right) ds+\int_{t+20\bar x/\bar v}^T\left(\frac{a^2 | \alpha(as+b)|^2}{2} +
L(y_1(s),s)\right) ds\\ \notag
&&-\int_{t}^{t_1}\frac{\bar \alpha(s)^2}{2}ds-\int_{t_1}^{\tau_1}\frac{ |\alpha(s)|^2}{2}ds-\int_{\tau_1}^{T}\frac{ |\alpha(s)|^2}{2}ds -\int_{t}^{\tau_1}L(y(s),s)ds-\int_{\tau_1}^{T}L(y(s),s)ds\\ \notag
&\leq& 
\left(\frac{\bar v}{40}-\frac{\bar v}{10}\right)\bar x+
\int_{t}^{t+20\bar x/\bar v}\ell_i(y_1(s),s)ds+\int_{t+20\bar x/\bar v}^T\left(\frac{a^2 |\alpha(as+b)|^2}{2} + L(y_1(s),s)\right) ds\\ \label{stima_B2I}
&&-\int_{\tau_1}^{T}\frac{|\alpha(s)|^2}{2}ds -\int_{t}^{\tau_1}L(y(s),s)ds-\int_{\tau_1}^{T}L(y(s),s)ds.
\end{eqnarray}
Similarly as above,
\begin{equation}\label{eq:stimaB2I}
\int_{t}^{t+   20\bar x/\bar v}\ell_i(y_1(s),s)ds-\int_{t}^{\tau_1}L(y(s),s)ds\leq M_\ell\left(\frac{20 \bar x}{\bar v}+(\tau_1-t)\right)\leq 40 M_\ell\frac{\bar x}{\bar v}.
\end{equation}
On the other hand, \begin{equation*}
\int_{t+20 \bar x/\bar v}^T\frac{a^2 |\alpha(as+b)|^2}{2}ds-\int_{\tau_1}^{T}\frac{ |\alpha(s)|^2}{2}ds=(a-1) \int_{\tau_1}^{T}\frac{|\alpha(s)|^2}{2}ds\leq
\frac{40 \bar x}{\bar v(T-t)}\int_{\tau_1}^{T}\frac{ |\alpha(s)|^2}{2}ds,
\end{equation*}
where the latter inequality comes from the fact that  $a-1\le \frac {20 \bar x/\bar v}{T-t-20\bar x/\bar v} $ and that
 $\bar v(T-t)>40 \bar x$.

Recall that $V_*$ is the constant defined in \eqref{eq:Vstar}, and that $\|\alpha\|_\infty\leq V_*$ in $[\tau_1,T]$.
Then, from the latter inequality, we deduce
\begin{equation}\label{eq:stimaB2Ibis}
\int_{t+20\bar x/\bar v}^T\frac{a^2 |\alpha(as+b)|^2}{2}ds-\int_{\tau_1}^{T}\frac{|\alpha(s)|^2}{2}ds\leq  \frac{20 \bar x}{\bar v} V_*^2 . 
\end{equation}
On the other hand, 
\begin{equation*}
\int_{t+ 20\bar x/\bar v}^TL(y_1(s),s) ds-\int_{\tau_1}^{T}L(y(s),s)ds=
\int_{t+20\bar x/\bar v}^TL(y(as+b),s) ds-\int_{\tau_1}^{T}L(y(s),s)ds= I_1+I_2
\end{equation*}
for
\begin{equation*}
\begin{array}{rcl}
I_1&=& \ds -\int_{\tau_1}^{t+20\bar x/\bar v}L(y(s),s)ds\\
I_2&=& \ds \int_{t+20\bar x/\bar v}^T\left(L(y(as+b),s) - L(y(s),s) \right)ds.
\end{array}
\end{equation*}
Since  $   0<   t+20 \bar x /\bar v -\tau_1\le 20 \bar x /\bar v$,
\begin{equation*}
I_1\leq 20 M_\ell \frac{\bar x}{\bar v}.
\end{equation*}
On the other hand, since both $y(as+b)$ and $y(s)$ belong to $J_l\setminus\{O\}$ for $s\in(t+  20\bar x/\bar v,T]$, there holds
\begin{equation}\label{eq:stimaB2quater}
d(y(as+b),y(s))\leq \int_{as+b}^{s}|\alpha(\theta)|d\theta\leq  V_* (T-s)\frac{t+20 \bar x/\bar v-\tau_1}{T-t-20 \bar x/\bar v}\leq
\frac{V_* (T-t)}{T-t-20 \bar x/\bar v} \frac{20 \bar x}{\bar v}\leq 40V_*\frac{\bar x}{\bar v},
\end{equation}
where the last inequality comes from the fact that $\bar v(T-t)>40 \bar x$. This implies that
\begin{equation*}
I_2\leq 40V_* L_\ell T \frac{\bar x}{\bar v}.
\end{equation*}
Hence,
\begin{equation}\label{eq:stimaB2ter}
\int_{t+\bar x/K_0}^TL(y_1(s),s) ds-\int_{\tau_1}^{T}L(y(s),s)ds\leq 20 (M_\ell+2L_\ell  V_* T)\frac{\bar x}{\bar v}.
\end{equation}
Injecting \eqref{eq:stimaB2I}, \eqref{eq:stimaB2Ibis} and~\eqref{eq:stimaB2ter} in \eqref{stima_B2I}, we obtain 
\begin{equation*}
  0\leq  -\frac{3\bar v \bar x}{40} + 20 ( 3 M_\ell+V_*^2+ 2 V_* L_\ell T  ) \frac{\bar x}{\bar v},
\end{equation*}
thus
\begin{equation}
\label{eq:10003}
\bar v\le \max\left(2L_\ell T, \frac {4 \bar x} {3T},  \frac {40 \bar x} {T-t},  20 V_*, 20  \sqrt{  \frac 2 3  ( 3 M_\ell+ V_*^2+ 2 L_\ell V_*T)}\right) .
\end{equation}

\noindent{\it Subcase B3: $\tau_1< t+ 20\bar x/\bar v$ and $\exists \tau\in(\tau_1,T)$ such that $y(\tau)=O$.}
Set
\begin{equation*}
\tau_3=\min\{\tau\in(\tau_1,T]:\; y(\tau)=O\},
\end{equation*}
i.e. $\tau_3$ is the first time greater than  $\tau_1$ at which the trajectory $y(\cdot)$ reaches the vertex.
Clearly, from the  definition of $\tau_1$, $y(\tau)\in J_l\setminus\{O\}$ for $\tau\in(\tau_1,\tau_3)$ and $\tau_3>\tau_2$.
As in the  previous cases, 
\begin{equation*}
\tau_2>\tau_1+\frac{\bar x}{V_*}\geq t+\frac{\bar x}{V_*}.
\end{equation*}
Since $ \bar v> 20V_*$, we know that  $\tau_2>t+20 \bar x/\bar v$. Hence, 
\begin{equation*}
\tau_1<t+20\frac{\bar x}{\bar v}<\tau_2<\tau_3.
\end{equation*}
Consider the trajectory $(y_1,\alpha_1)$ defined in~\eqref{contr_B2}. Note that 
\begin{equation*}
y_1(s)= y(as+b) \in J_l\setminus\{O\} \qquad\forall s\in I_*:=\left[t+20\frac{\bar x}{\bar v}, \frac{\tau_3(T-t-20\bar x/\bar v)-T(\tau_1-t-20\bar x/\bar v)}{T-\tau_1}\right] .
\end{equation*}
Observe that $\tau_3\in I_*$ and $y(\cdot)-y_1(\cdot)=(\bar y(\cdot)-\bar y_1(\cdot))e_l$ in~$I_*$
with $\bar y(t+20\bar x/\bar v)-\bar y_1(t+20\bar x/\bar v)>0$ and $\bar y(\tau_3)-\bar y_1(\tau_3)<0$.
We deduce that there exists $\tau_4\in (t+20\bar x/\bar v,\tau_3)$ such that $y(\tau_4)=y_1(\tau_4)$.

We can now choose a competitor $(y_2,\alpha_2)$ as the trajectory corresponding to the control
\begin{equation*}
\alpha_2(s)=\alpha_1(s)\textrm{ in }[t,\tau_4],\qquad 
\alpha_2(s)=\alpha(s) \textrm{ in }(\tau_4, T].
\end{equation*}
Note that there holds: $y_2(s)\in J_i\setminus\{O\}$ for $s\in [t,t+20\bar x/\bar v)$, $y_2(t+20\bar x/\bar v)=O$, $y_2(s)\in J_l\setminus\{O\}$ and $y_2(s)=y(as+b)$ for $s\in [t+20\bar x/\bar v,\tau_4)$, $y_2(s)=y(s)$ for $s\in [\tau_4,T]$.

The optimality of $(y,\alpha)$ entails
\begin{eqnarray}\notag
0&\leq& J_t(x;(y_2,\alpha_2))-J_t(x;(y,\alpha))\\ \notag
&\leq& 
\int_{t}^{t+20\bar x/\bar v}\left(\frac{\bar v^2}{800}+\ell_i(y_2(s),s)\right) ds+\int_{t+20\bar x/\bar v}^{\tau_4}\left(\frac{a^2 |\alpha(as+b)|^2}{2}+ L(y_2(s),s)\right) ds\\ \notag
&&
-\int_{t}^{t_1}\frac{\bar \alpha(s)^2}{2}ds-\int_{t_1}^{\tau_1}\frac{ |\alpha(s)|^2}{2}ds-\int_{\tau_1}^{\tau_4}\frac{|\alpha(s)|^2}{2}ds- \int_{t}^{\tau_1}L(y(s),s)ds\\\label{stima:B2II}
&&
-\int_{\tau_1}^{\tau_4}L(y(s),s)ds.
\end{eqnarray}
As above,
\begin{equation*}
\int_{t}^{t+20\bar x/\bar v}\frac{\bar v^2}{800}ds-\int_{t}^{t_1}\frac{\bar \alpha(s)^2}{2}ds\leq -\frac{3\bar v \bar x}{40}.
\end{equation*}
The same arguments as those used for obtaining \eqref{eq:stimaB2I},\eqref{eq:stimaB2Ibis} lead to 
\begin{eqnarray*}
&&\int_{t}^{t+ 20\bar x/\bar v}\ell_i(y_2(s),s)ds-\int_{t}^{\tau_1}L(y(s),s)ds\leq 40M_\ell\frac{\bar x}{\bar v},\\
&&\int_{t+20\bar x/\bar v}^{\tau_4}\frac{a^2 |\alpha(as+b)|^2}{2}ds-\int_{\tau_1}^{\tau_4}\frac{ |\alpha(s)|^2}{2}ds\leq 20V_*^2\frac{\bar x}{\bar v}.
\end{eqnarray*}
On the other hand,
\begin{equation*}
\begin{array}{rcl}
\Lambda&:=&\ds \int_{t+20\bar x/\bar v}^{\tau_4}L(y_2(s),s)ds-\int_{\tau_1}^{\tau_4}L(y(s),s)ds\\
& =&
\ds \int_{t+20\bar x/\bar v}^{\tau_4}L(y(as+b),s)ds-\int_{\tau_1}^{\tau_4}L(y(s),s)ds \\
& =&
\ds -\int_{\tau_1}^{t+20\bar x/\bar v}L(y(s),s)ds+ \int_{t+ 20\bar x/\bar v}^{\tau_4}\left[L(y(as+b),s)-L(y(s),s)\right]ds\\
& \leq & \ds 20 M_\ell\frac{\bar x}{\bar v}+\int_{t+20\bar x/\bar v}^{\tau_4}\left[\ell_l(y(as+b),s)-\ell_l (y(s),s)\right]ds,
\end{array}
\end{equation*}
where the last inequality is due  to the fact that both $y(as+b)$ and $y(s)$ belong to $J_l\setminus\{O\}$ for $s\in (t+20\bar x/\bar v,\tau_4)$.
Observe that estimate~\eqref{eq:stimaB2quater} holds on $[t+20\bar x/\bar v, \tau_4]$, hence
\begin{equation*}
\Lambda\leq 20(M_\ell+2L_\ell V_* T)\frac{\bar x}{\bar v}.
\end{equation*}
Injecting all these estimates in~\eqref{stima:B2II}, we obtain
\begin{equation*}
0\leq -\frac{3\bar v\bar x}{40} +20 \left(3M_\ell+V_*^2+2L_\ell V_* T\right)\frac{\bar x}{\bar v},
\end{equation*}
thus, in Subcase B3,
\begin{equation}
\label{eq:10004}
\bar v\le \max\left(     2L_\ell T, \frac {4 \bar x} {3T},  \frac {40 \bar x} {T-t}, 20 V_*, 20  \sqrt{  \frac 2 3  ( 3 M_\ell+ V_*^2+ 2 L_\ell V_*  T  ) }   \right).
\end{equation}
Finally, in all cases, $\bar v$ is smaller than the maximal value of the  right hand sides in   \eqref{eq:10001},\eqref{eq:10002},\eqref{eq:10003},\eqref{eq:10004}.
\end{proof}

If, in addition to the assumptions made in Theorem~\ref{optcur_lip}, the final cost is continuous on the whole network $\cG$ (thus Lipschitz continuous on~$\cG$ because of the other assumptions), then it turns out that the optimal controls are uniformly bounded in the whole time interval $[0,T]$:
\begin{theorem}\label{optcur_lipT}
We keep the  assumptions of Theorem~\ref{optcur_lip} and also assume  that $g\in C^0(\cG)$. Then, the same result of Theorem~\ref{optcur_lip} holds true with a constant~$V$ independent of $(T-t)$, namely: there exists a constant $V_\#>0$ (dependent on $L_g$, $M_g$, $L_\ell$, $d(x,O)$ but independent of $(T-t)$) such that 
\begin{equation*}
\|\alpha\|_\infty \leq V_\#\qquad \forall (y,\alpha)\in \Gamma^{\rm{opt}}_t[x].
\end{equation*}
\end{theorem}
\begin{proof}
We consider the same cases as in the proof of Theorem~\ref{optcur_lip}. Cases $(1)$-$(4)$ and $(5)-(a)$ are dealt with using the same arguments as in the proof of Theorem~\ref{optcur_lip}. In Case $(5)-(b)$, we apply Lemma~\ref{lemma:lipT} below.
\end{proof}
\begin{lemma}\label{lemma:lipT}
Under the assumptions of Theorem~\ref{optcur_lipT}, the statement of Lemma~\ref{lemma:casi_alphalim} holds true with a constant~$V_\#$ independent of $(T-t)$.
\end{lemma}
\begin{proof}
  We borrow some notations of Lemma~\ref{lemma:casi_alphalim}. In particular, we set: $x=\bar x e_i$, $(y,\alpha)\in \Gamma^{\rm{opt}}_t[x]$, $\alpha(t)=-\bar v e_i$ with $\bar v>0$, $\alpha(s)=\bar \alpha(s) e_i$ for $s\in[t,t_1)$. (Recall: $y(t_1)=O$). By Lemma~\ref{lemma:EL}, without any loss of generality, we assume $\bar v$ so large to have $\bar \alpha(s)<0$ for $s\in [t,t_1)$. \\
Note that points $(1)$-$(4)$ in the proof of Theorem~\ref{optcur_lipT} ensure that there exists a positive constant $V_1$ (dependent on $L_g$, $M_g$, $L_\ell$, $d(x,O)$ but independent of $(T-t)$ and of $(T-t_1)$) such that: $|\alpha(s)| \leq V_1$ for $s\in[t_1,T]$.\\
We proceed constructing a competitor $(y_1,\alpha_1)$. For a constant $\mu\geq V_1$ which will suitably chosen later on, we introduce the trajectory $(y_1,\alpha_1)\in\Gamma_{t,t_1}[x]$ obeying to the control $\alpha_1(s)=\bar \alpha_1(s)e_i$ with 
\begin{equation*}
\bar \alpha_1(s)=\left\{\begin{array}{ll}
\bar \alpha(s)&\qquad\textrm{if }\bar\alpha(s)\geq -\mu\\
0&\qquad\textrm{otherwise. }
\end{array}\right.
\end{equation*}
Clearly, if $y_1(t_1)=y(t_1)$, then $\alpha(\cdot)=\bar \alpha(\cdot)$ a.e. in $[t,t_1]$ and there is nothing to prove. So we consider $y_1(t_1)\ne O$. We take $y_1(s)=\bar y_1(s)e_i$ for $s\in [t,t_1]$. Since $|\bar\alpha_1(\cdot)|\leq |\bar\alpha(\cdot)|$ in $[t,t_1]$, $y_1(t_1)\in J_i\setminus\{O\}$, namely $\bar y_1(t_1)>0$. Recalling $y(t_1)=O$ and $\bar \alpha(\cdot)<0$ in $[t,t_1)$, 
\begin{equation*}
\bar y_1(t_1)=[y_1(t_1)-y(t_1)]\cdot e_i =-\int_t^{t_1}\bar \alpha (s)\car_{\{\bar \alpha(s)<-\mu\}}ds=\int_t^{t_1}|\bar \alpha (s)|\car_{\{\bar \alpha(s)<-\mu\}}ds=: A
\end{equation*}
and also 
\begin{equation}\label{eq:22marzo_0}
d(y_1(s),y(s))\leq -\int_t^{s}\bar \alpha (\tau)\car_{\{\bar \alpha(\tau)<-\mu\}}d\tau\leq A \qquad\forall s\in[t,t_1].
\end{equation}
In order to construct our competitor after time $t_1$, we need an auxiliary trajectory. We consider the trajectory $(y_2,\alpha_2)$ starting at point~$y_1(t_1)=Ae_i$ at time~$t_1$ and obeying to the control $\alpha_2(s)=-V_1e_i$ for $s\in[t_1,t_1+A/V_1]$. Clearly, $y_2(s)\in e_i\setminus\{O\}$ for $s\in[t_1,t_1+A/V_1)$ with $y_2(t_1+A/V_1)=O$. We set
\begin{equation*}
t_2=\min\left\{T,\, t_1+A/V_1,\, \min\{s\in[t_1,T]\;:\; y_2(s)=y(s)\}\right\}
\end{equation*}
namely $t_2$ is the first moment among: the time horizon~$T$, the instant $t_1+A/V_1$ when $y_2$ reaches~$O$ and the first moment when the trajectories $y(\cdot)$ and $y_1(\cdot)$ intersect. On the interval $[t_1,t_2)$, we define our competitor $(y_1,\alpha_1)$ as: $y_1(s)=y_2(s)$. We note that, for $s\in[t_1,t_2)$, there holds
\begin{equation}\label{eq:22marzo_1}
d(y_1(s),y(s))\leq d(y_1(s),O)+d(O,y(s))\leq A-V_1(s-t_1)+V_1(s-t_1)\leq A.
\end{equation}
Let us now argue differently according to the different situations in the definition of time~$t_2$.\\
\texttt{Case (a): $t_2=T$.} In this case, our competitor is already completely constructed. By the optimality of $(y,\alpha)$,
\begin{equation}\label{eq:22marzo_2}
0\leq J_t(x;(y_1,\alpha_1))-J_t(x;(y,\alpha))=\sum_{i=1}^5 I_i
\end{equation}
where
\begin{eqnarray*}
I_1&=&\int_t^{t_1}\frac{|\alpha_1(s)|^2-|\alpha(s)|^2}{2}ds,\qquad I_2=\int_t^{t_1}\left(\ell_i(y_1(s),s)-\ell_i(y(s),s)\right)ds,\\
I_3&=&\int_{t_1}^{t_2}\frac{|\alpha_1(s)|^2-|\alpha(s)|^2}{2}ds,\qquad I_4=\int_{t_1}^{t_2}\left(\ell_i(y_1(s),s)-L(y(s),s)\right)ds, \\I_5&=&g(y_1(T))-g(y(T)).
\end{eqnarray*}
From our choice of $\alpha_1$ in $[t,t_1]$, the Lipschitz continuity of $\ell_i$ and \eqref{eq:22marzo_0},
\begin{equation*}
I_1=-\int_t^{t_1}\frac{|\alpha (s)|^2}{2}\car_{\{\bar \alpha(s)<-\mu\}}ds, \quad
I_2\leq L_\ell T \|d(y_1(s),y(s))\|_{L^\infty(t,t_1)}\leq L_\ell T A.
\end{equation*}
Moreover, we note $t_2-t_1\leq A/V_1$ because of $t_2=T$. From our choice of $\alpha_1$ in $[t_1,t_2]$, 
\begin{equation*}
I_3\leq \int_{t_1}^{t_2}\frac{|\alpha_1 (s)|^2}{2}ds= \frac{V_1^2}{2}(t_2-t_1)\leq \frac{V_1 A}{2}.
\end{equation*}
In order to estimate $I_4$ and $I_5$, observe that for $s\in [t_1,t_2]$ $y_1(s)$ and $y(s)$ may belong to different edges. For this reason, nothing better than
\begin{equation*}
I_4\leq 2M_\ell (t_2-t_1)\leq 2M_\ell A/V_1\quad\textrm{and}\quad
I_5\leq L_g d(y_1(T),y(T))\leq L_g A,
\end{equation*}
can be obtained,
where the latter estimate is due to the global Lipschitz continuity of $g$ and \eqref{eq:22marzo_1} (here, the continuity of $g$ in the vertex plays a crucial role).
Replacing all these estimates in~\eqref{eq:22marzo_2}, by the definition of~$A$, we get
\begin{equation*}
0\leq \int_t^{t_1}\left(-\frac{|\alpha (s)|}{2}+(L_\ell T+V_1/2+2M_\ell/V_1+L_g)\right)|\alpha (s)|\car_{\{\bar \alpha(s)<-\mu\}}ds.
\end{equation*}
Hence, if $\{\bar \alpha(s)<-\mu\}\cap[t,t_1]$ has positive measure and $\mu>2(L_\ell T+V_1/2+2M_\ell/V_1+L_g)$, then we get the desired contradiction.\\
\texttt{Case (b): $t_2=\min\{s\in[t_1,T]\;:\; y_2(s)=y(s)\}$.} We need to construct $(y_1,\alpha_1)$ also on~$(t_2,T]$; we choose: $(y_1(s),\alpha_1(s))=(y(s),\alpha(s))$ for $s\in(t_2,T]$. Note that also in this case,  $t_2-t_1\leq A/V_1$. Following the same calculations as those of case-$(a)$, we end the proof.\\
\texttt{Case (c): $t_2=t_1+A/V_1$ with $t_2<T$ and $t_2<\min\{s\in[t_1,T]\;:\; y_2(s)=y(s)\}\}$.} Observe that $y_1(t_2)=O$ and $y(t_2)\in J_j\setminus\{O\}$ for some $j\in\{1,\dots,N\}$ with $|y(t_2)|\leq A$. In this case, we need to construct our competitor also in the interval $[t_2,T]$. To this end, we need another auxiliary trajectory; let $y_3(\cdot)$ be the path that starts at~$O$ at time~$t_2$ and obeying to the control $\alpha_3(s)=|\alpha(s)|e_j$ for $s\in[t_2,T]$. We set
\begin{equation*}
t_3=\min\left\{T,\, \min\{s\in[t_2,T]\;:\; y_3(s)=y(s)\}\right\}
\end{equation*}
and we define
\begin{equation*}
(y_1(s),\alpha_1(s))=(y_3(s),\alpha_3(s))\qquad \textrm{for }s\in(t_2,t_3].
\end{equation*}
Note that, in the interval $[t_2,t_3]$ both $y_1(s)$ and $y(s)$ belong to the same edge~$J_j$; moreover, by $y_1(t_2)=O$ and $|y(t_2)|\leq A$, for $s\in[t_2,t_3]$ there holds
\begin{equation}\label{eq:22marzo_3}
d(y_1(s),y(s))\leq d\left(\int_{t_2}^s|\alpha(\tau)|d\tau e_j,\left(A+\int_{t_2}^s\alpha(\tau)d\tau\right) e_j\right)=A+\int_{t_2}^s\left(\alpha(\tau)-|\alpha(\tau)|\right)d\tau \leq A.
\end{equation}
Now, we  split our arguments according to the different situations in the definition of time~$t_3$.\\
\texttt{Case (c1): $t_3=T$.} From the optimality of $(y,\alpha)$, 
\begin{equation}\label{eq:22marzo_4}
0\leq J_t(x;(y_1,\alpha_1))-J_t(x;(y,\alpha))=\sum_{i=1}^7 I_i
\end{equation}
where: for $i=1,\dots,5$, the $I_i$'s are the same as those of case $(a)$ (in particular, the estimates obtained in case $(a)$ still hold true because $t_2-t_1=A/V_1$) and
\begin{equation*}
I_6=\int_{t_2}^{t_3}\left(\frac{|\alpha_1(s)|^2-|\alpha(s)|^2}{2}\right)ds,\quad 
I_7=\int_{t_2}^{t_3}\left(L(y_1(s),s)-\ell_j(y(s),s)\right)ds.
\end{equation*}
Our definition of $\alpha_3$ entails: $I_6=0$. Moreover, thanks to assumption~\eqref{eq:460} on the structure of $\ell_O$, the Lipschitz continuity of $\ell_i$ and~\eqref{eq:22marzo_3}, 
\begin{equation*}
I_7 \leq \int_{t_2}^{t_3}\left(\ell_j(y_1(s),s)-\ell_j(y(s),s)\right)ds \leq L_\ell TA.
\end{equation*}
Replacing all these estimates in~\eqref{eq:22marzo_4}, we get
\begin{equation*}
0\leq \int_t^{t_1}\left(-\frac{|\alpha (s)|}{2}+(2L_\ell T+V_1/2+2M_\ell/V_1+L_g)\right)|\alpha (s)|\car_{\{\bar \alpha(s)<-\mu\}}ds.
\end{equation*}
Hence, if $\{\bar \alpha(s)<-\mu\}\cap[t,t_1]$ has positive measure and $\mu>2(2L_\ell T+V_1/2+2M_\ell/V_1+L_g)$, then we get the desired contradiction.\\
\texttt{Case (c2): $t_3=\min\{s\in[t_2,T]\;:\; y_3(s)=y(s)\}$ with $t_3<T$.} We define our competitor on $[t_3,T]$ as the trajectory starting at $y_1(t_3)=y(t_3)$ at time~$t_3$ and obeying to the control~$\alpha_1(s)=\alpha(s)$ for $s\in[t_3,T]$. We end our proof using the same calculations of case $(c1)$.
\end{proof}

\subsection{Closed graph property}
Let us now investigate a closed graph property of the multi-valued map  $x  \rightrightarrows \Gamma^{\rm{opt}}[x]$ defined in~\eqref{eq:gamma_opt}. 
\begin{proposition}\label{prp:prop1}
Fix $x\in\cG$ and a sequence $\{x_n\}_{n\in\N}$ with $x_n\in\cG$ and $x_n\to x$ as $n\to\infty$. Consider $(y_n,\alpha_n)\in\Gamma^{\rm{opt}}[x_n]$ for any $n\in\N$. Assume that, as $n\to\infty$, $y_n$ uniformly converge to a path~$y$. Then, the $y$ belongs to $Y_{x,0}$ (defined in~\eqref{eq:2}). Moreover, there exists a measurable function~$\alpha$ such that~$(y,\alpha)$ belongs to~$\Gamma^{\rm{opt}}[x]$ defined in~\eqref{eq:gamma_opt}.
\end{proposition}
An intermediate step in the proof of Proposition \ref{prp:prop1} is  Lemma \ref{lemma1} below which deals with the approximation of admissible trajectories. The proof of Lemma \ref{lemma1} is postponed after that of Proposition \ref{prp:prop1}.

%
%


\begin{lemma}\label{lemma1}
Fix $x\in\cG$ and $(y,\alpha)\in\Gamma[x]$; consider a sequence $\{x_n\}_{n\in\N}$ of points $x_n\in\cG$ such that $\d_n:=d(x_n,x)\to 0$ as $n\to \infty$.
Then, there exists a sequence $\{(y_n,\alpha_n)\}_{n\in\N}$ such that, for any $n\in\N$, $(y_n,\alpha_n)\in\Gamma[x_n]$,
\begin{equation}\label{eq:lemma1}
\begin{array}{rl}
(i)&\qquad \sup_{[0,T]}d(y_n(\cdot),y(\cdot))\leq \d_n+\|\alpha\|_2\sqrt{\d_n}\qquad \textrm{with}\qquad y_n(T)=y(T)\\
(ii)&\qquad\|\alpha_n\|_2^2\leq \|\alpha\|_2^2+\d_n\left(1+\frac{\|\alpha\|_2^2}{T-\d_n}\right)\\
(iii)&\qquad \lim\limits_{n\to\infty}J_0(x_n;(y_n,\alpha_n)) = J_0(x;(y,\alpha)).
\end{array}
\end{equation}
\end{lemma}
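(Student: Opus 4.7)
The plan is to prepend to a time-rescaled copy of $(y,\alpha)$ a short connecting piece that steers $x_n$ into $x$ along a geodesic of $\cG$. I would let $y_n$ on $[0,\delta_n]$ be the unit-speed geodesic in $\cG$ from $x_n$ to $x$ (going through $O$ whenever $x_n$ and $x$ lie on different edges; its length is exactly $\delta_n$), and on $[\delta_n,T]$ set
\[
y_n(s):=y\!\left(\tfrac{T(s-\delta_n)}{T-\delta_n}\right),\qquad \alpha_n(s):=\tfrac{T}{T-\delta_n}\,\alpha\!\left(\tfrac{T(s-\delta_n)}{T-\delta_n}\right),
\]
keeping the edge label of $\alpha$. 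By construction $y_n(\delta_n)=y(0)=x$ and $y_n(T)=y(T)$; admissibility of $\alpha_n$ is preserved, since whenever $y_n(s)=O$ one has $y(\tau_s)=O$ with $\tau_s:=T(s-\delta_n)/(T-\delta_n)$, forcing $\alpha(\tau_s)$ (and hence $\alpha_n(s)$) to be null by Theorem~\ref{sec:optim-contr-probl}.

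For (i), on $[0,\delta_n]$ the triangle inequality combined with Cauchy--Schwarz gives $d(y_n(s),y(s))\le (\delta_n-s)+\sqrt{s}\,\|\alpha\|_2\le\delta_n+\sqrt{\delta_n}\,\|\alpha\|_2$; on $[\delta_n,T]$, since $|\tau_s-s|=\delta_n(T-s)/(T-\delta_n)\le\delta_n$, one has $d(y_n(s),y(s))=d(y(\tau_s),y(s))\le\sqrt{\delta_n}\,\|\alpha\|_2$, again by Cauchy--Schwarz. For (ii), a direct change of variables on $[\delta_n,T]$ gives $\int_{\delta_n}^T|\alpha_n|^2=\frac{T}{T-\delta_n}\|\alpha\|_2^2$, while the connecting piece (unit speed) contributes exactly $\delta_n$; rearranging yields the desired bound.

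For (iii), I would split $J_0(x_n;(y_n,\alpha_n))$ into the $[0,\delta_n]$ contribution, the $[\delta_n,T]$ contribution, and the terminal cost. The terminal cost equals $g(y(T))$ exactly. The $[0,\delta_n]$ contribution vanishes by boundedness of $L$ and by the explicit $\delta_n/2$ kinetic energy. The kinetic part on $[\delta_n,T]$ converges to $\|\alpha\|_2^2/2$ by the exact formula above. For the running cost on $[\delta_n,T]$, the same change of variables yields
\[
\int_{\delta_n}^T L(y_n(\tau),\tau)\,d\tau=\tfrac{T-\delta_n}{T}\int_0^T L\!\left(y(t),\,\delta_n+\tfrac{t(T-\delta_n)}{T}\right)dt,
\]
to which I apply dominated convergence.

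The main obstacle is the spatial discontinuity of $L$ at the vertex $O$: uniform convergence $y_n\to y$ alone does not imply $L(y_n(\cdot),\cdot)\to L(y(\cdot),\cdot)$ pointwise. The construction bypasses this because on $[\delta_n,T]$ the trajectory $y_n$ reuses exactly the spatial points of $y$, only shifted in time; thus the integrand above reduces to $L(y(t),\cdot)$ evaluated at nearby times, and this map is continuous in the time variable separately on $\{t:y(t)=O\}$ (where it equals $\ell_O$) and on $\{t:y(t)\in J_i\setminus\{O\}\}$ (where it equals $\ell_i(y(t),\cdot)$). Together with the uniform bound from~[H1], dominated convergence then closes the argument.
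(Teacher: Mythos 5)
Your proposal is correct and follows essentially the same route as the paper: a unit-speed connecting piece from $x_n$ to $x$ on $[0,\delta_n]$ followed by the time-rescaled copy of $(y,\alpha)$, with the change of variables in the running cost reducing the spatial argument to exactly $y(\xi)$ so that only continuity in time is needed. The only cosmetic differences are that the paper first reduces to the case where $x_n$ and $x$ lie on the same edge (rather than using the geodesic through $O$) and concludes (iii) via a uniform modulus of continuity instead of dominated convergence; both variants are fine.
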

\begin{proof}[Proof of Proposition~\ref{prp:prop1}]
Consider $x$, $x_n$, $(y_n,\alpha_n)$ and $y$ as in the statement. We wish to prove that there exists a control~$\alpha$ such that
\begin{itemize}
\item[i)] $(y,\alpha)$ belongs to~$\Gamma[x]$,
\item[ii)] $(y,\alpha)$ is optimal for $J_0$, i.e.$J_0(x,(y,\alpha))\leq J_0(x,(\hat y,\hat \alpha))$ for every $(\hat y,\hat \alpha)\in\Gamma[x]$.  
\end{itemize}
Fix any $(\hat y,\hat \alpha)\in\Gamma[x]$. Lemma~\ref{lemma1} ensures that there exists a sequence $\{(\hat y_n,\hat \alpha_n)\}_{n\in\N}$ such that $(\hat y_n,\hat \alpha_n)\in\Gamma[x_n]$ and
\begin{equation}\label{eq:prp1}
\begin{array}{c}
\hat y_n \to \hat y \textrm{ uniformly in $[0,T]$ as $n\to\infty$},\qquad\qquad \|\hat \alpha_n\|_{2}\leq \|\hat \alpha\|_{2} +o_n(1),\\
\limsup\limits_{n\to\infty}J_0(x_n;(\hat y_n,\hat \alpha_n))\leq J_0(x;(\hat y,\hat \alpha))
\end{array}
\end{equation}
where $o_n(1)$ is a sequence such that $\lim_n o_n(1)=0$.
On the other hand, the optimality of $(y_n,\alpha_n)$ yields
\begin{equation}\label{eq:prp3}
J_0(x_n;(y_n,\alpha_n))\leq J_0(x_n;(\hat y_n,\hat \alpha_n)).
\end{equation}
From the observations above,  we deduce that $J_0(x_n;(y_n,\alpha_n))$ are uniformly bounded and, in particular that there exists a constant~$C$, independent of~$n$, such that $\int_t^T|\alpha_n(\tau)|^2\, d\tau\leq C$. Hence, repeating the same arguments as those in the proof of Proposition~\ref{prp:ex_OT} (in particular, for obtaining ~\eqref{claim1}), we deduce that $\{\alpha_n\}_{n\in\N}$ converges to some control~$\alpha$ in the weak topology of~$L^2([0,T],\R^d)$ and $(y,\alpha)\in\Gamma[x]$. Hence, point~$i)$ is proved.\\
Taking the~$\liminf_n$ in~\eqref{eq:prp3} and using~\eqref{eq:prp1}, we also deduce $J_0(x,(y,\alpha))\leq J_0(x,(\hat y,\hat \alpha))$. Thanks to the arbitrariness of $(\hat y,\hat \alpha)\in\Gamma[x]$, we deduce point~$ii)$.
\end{proof}

\begin{proof}[Proof of Lemma~\ref{lemma1}]
Without any loss of generality, we may assume that, (possibly  after extracting a subsequence that we still denote $\{x_n\}$)  all the points~$x$ and~$x_n$ belong to the same edge (for simplicity, say $J_1$) for~$n$ sufficiently large, so  $x=\bar x e_1$, $x_n=\bar x_n e_1$ for $\bar x,\bar x_n\in\R^+$.
Indeed, if  $x=O$, we may argue edge by edge since
there are finitely many edges.
Set $\delta_n=d(x,x_n)=|\bar x-\bar x_n|$.
Let us now introduce a control $\alpha_n$ such that the corresponding path~$y_n$ is admissible (i.e. it takes its values on the network). 

Set
\begin{equation*}
\alpha_n(s)=
\left\{\begin{array}{ll}
\left\{\begin{array}{ll}
e_1&\quad\textrm{if }\bar x_n\leq \bar x\\
-e_1&\quad\textrm{if }\bar x_n> \bar x\\
\end{array}
\right\}&\quad \textrm{for }s\in[0,\delta_n]\\
\frac{T}{T-\d_n}\alpha \left((s-\d_n)\frac{T}{T-\d_n}\right)&\quad \textrm{for }s\in(\delta_n,T]\end{array}\right.
\end{equation*}
(note that here the structure $A_i=\{i\}\times \R$ plays a crucial role) and let $y_n$ start from $x_n$ and correspond to $\alpha_n$: 
\[
y_n(s)=x_n+\int_t^s \alpha_n(\tau)\, d\tau.
\]
Observe that for $s\in[0,\d_n]$,
\begin{equation*}
y_n(s)=
x_n+ (s/\delta_n) \;  (x-x_n)   
\end{equation*}
in particular, $y_n(\d_n)=x$.  From the definition of~$\alpha_n$,  we get after a change of variable that,  for $s\in[\d_n,T]$,
\begin{eqnarray}\notag
y_n(s)&=&x+\int_{\d_n}^s\frac{T}{T-\d_n}\alpha \left((\tau-\d_n)\frac{T}{T-\d_n}\right) \, d\tau
=x+\int_{0}^{(s-\d_n)\frac{T}{T-\d_n}}\alpha (\tau) \, d\tau\\ \label{eq:rescal}
&=&y\left((s-\d_n)\frac{T}{T-\d_n}\right).
\end{eqnarray}
The trajectory $(y_n,\alpha_n)$ is admissible  and
\begin{equation}\label{eq:rescal2}y_n(T)=y(T).
\end{equation}
The trajectory $y_n$ starts at $x_n$, moves with speed $1$ until it reaches the point $x$ at time~$\delta_n$ (clearly, in this time interval it always remains in the edge $J_1$)
and, from time~$\delta_n$,  becomes a time-rescaled version of  the trajectory~$y$  such that $y_n(T)=y(T)$.

Let us now estimate~$d(y(s),y_n(s))$.  
For $s\in[0,\delta_n]$, 
\begin{eqnarray*}
d(y(s),y_n(s))&\leq& d(y_n(s),x)+d(y(s),x)\leq (\delta_n-s)+\int_0^s|\bar \alpha(\tau)|\, d\tau \\
&\leq & (\delta_n-s)+\|\alpha\|_2\sqrt{s}
\end{eqnarray*}
(Cauchy-Schwarz inequality is used in the last line).\\
This and \eqref{eq:rescal} imply that for $s\in[\delta_n,T]$, 
\begin{eqnarray*}
d(y(s),y_n(s))&=& d\left(y(s),y\left((s-\d_n)\frac{T}{T-\d_n}\right)\right)
\leq \int_{(s-\d_n)\frac{T}{T-\d_n}}^s|\bar \alpha(\tau)|\, d\tau\\
&\leq& \|\alpha\|_2\sqrt{\d_n}\sqrt{\frac{T-s}{T-\d_n}}.
\end{eqnarray*}
The latter two inequalities easily imply the bound \eqref{eq:lemma1}-$(i)$.


Next, by definition of~$\alpha_n$, 
\begin{eqnarray*}
\|\alpha_n\|_2^2&=&\d_n+\int_{\d_n}^s\left(\frac{T}{T-\d_n}\right)^2\alpha^2 \left((\tau-\d_n)\frac{T}{T-\d_n}\right) \, d\tau= 
\d_n+\int_{0}^T\frac{T}{T-\d_n}\alpha (\tau)^2 \, d\tau\\
&=& \d_n+\|\alpha\|_2^2+\frac{\d_n}{T-\d_n}\|\alpha\|_2^2,
\end{eqnarray*}
which easily implies the bound of \eqref{eq:lemma1}-$(ii)$.

We now prove \eqref{eq:lemma1}-$(iii)$. From \eqref{eq:rescal2}, $g(y_n(T))=g(y(T))$. Hence,
\begin{equation}\label{eq:rescal4}
J_0(x_n;(y_n,\alpha_n))-J_0(x;(y,\alpha))= \sum_{i=1}^4I_i
\end{equation}
where
\begin{eqnarray*}
I_1&=&\int_0^{\d_n}\left[\sum_{i=1}^N \ell_i(y_n(\tau),\tau)\car_{y_n(\tau)\in J_i\setminus\{O\}} +\ell_O(\tau)\car_{y_n(\tau)=O}\right] d\tau,\\
I_2&=& \frac{\|\alpha_n\|_2^2-\|\alpha\|_2^2}{2},\\
I_3&=&\int_{\d_n}^T\left[\sum_{i=1}^N \ell_i(y_n(\tau),\tau)\car_{y_n(\tau)\in J_i\setminus\{O\}} +\ell_O(\tau)\car_{y_n(\tau)=O}\right] d\tau,\\
I_4&=&-\int_0^{T}\left[\sum_{i=1}^N \ell_i(y(\tau),\tau)\car_{y(\tau)\in J_i\setminus\{O\}} +\ell_O(\tau)\car_{y(\tau)=O}\right] d\tau.
\end{eqnarray*}
The boundedness of $\ell_i$ implies
\begin{equation*}
|I_1|\leq K\d_n,
\end{equation*}
for $K=\sum_1^N\|\ell_i\|_\infty+\|\ell_*\|_\infty$.
On the other hand, \eqref{eq:lemma1}-(ii) entails
\begin{equation*}
|I_2|\leq \frac{\d_n}{2}\left(1+\frac{\|\alpha\|_2^2}{T-\d_n}\right).
\end{equation*}
The definition of $\alpha_n$ and \eqref{eq:rescal} 
yield
\begin{multline*}
I_3=\int_{\d_n}^T\left[\sum_{i=1}^N \ell_i\left(y\left((\tau-\d_n)\frac{T}{T-\d_n}\right),\tau\right)\car_{y\left((\tau-\d_n)\frac{T}{T-\d_n}\right)\in J_i\setminus\{O\}}\right.\\
\left. +\ell_O(\tau)\car_{y\left((\tau-\d_n)\frac{T}{T-\d_n}\right)=O}\right] d\tau,
\end{multline*}
which becomes after a change of variable,
\begin{multline*}
I_3=\int_{0}^T\left[\sum_{i=1}^N \ell_i\left(y(\theta),\frac{T-\d_n}{T}\theta+\d_n\right)\car_{y\left(\theta\right)\in J_i\setminus\{O\}}\right.\\
\left. +\ell_O\left(\frac{T-\d_n}{T}\theta+\d_n\right)\car_{y\left(\theta\right)=O}\right] \left(1-\frac{\d_n}{T}\right)d\theta.
\end{multline*}
Let $\cG'$ a bounded subset of $\cG$ such that $y(s)$ belongs to $\cG'$ for all $s\in[0,T]$ and let $\omega$ be a common modulus of continuity of the $\ell_i$ in~$J_i\cap \cG'$. The latter observation  and the definition of $I_4$ yield
\begin{equation*}
|I_3+I_4|\leq \int_0^T (N+1)\omega \left(\d_n\frac{T-\theta}{T}\right)\, d\theta+\d_n(N+1)K\leq (N+1)T \omega(\d_n).
\end{equation*}
Combining all the estimates  with \eqref{eq:rescal4} and taking the $\limsup$, we complete the proof of \eqref{eq:lemma1}-(iii).
\end{proof}

\subsection{Lipschitz continuity  of the value function}
We investigate the Lipschitz continuity of the value function $u$.  We will see below that special assumptions will be needed for it to hold up to the horizon  $T$.

\begin{proposition}\label{prop:loclip}
Under the same assumption as in Theorem~\ref{optcur_lip}, the value function is locally Lipschitz continuous in $\cG\times [0,T)$. 
\end{proposition}
\begin{remark}
Note that in contrast with Remark \ref{rmk:U_holder}, we do not suppose that the costs $\ell_i$ are Lipschitz continuous with respect to time.
\end{remark}
\begin{proof}
The proof borrows some ideas of \cite[Proposition 4.1]{CCC1} and is split into several steps. 
For brevity, we set
\begin{equation}
 \label{eq:cGdelta}   \cG_{\delta}=\{ x\in \cG:  d(x,O)\le \delta\}.
\end{equation}

\texttt{Step 1.} We first prove that $u(\cdot,t)$ is locally Lipschitz continuous in $J_i\setminus\{O\}$ locally uniformly with respect to $t\in[0,T)$. More precisely, having fixed $T_1\in(0,T)$ 
and $R>0$,
we wish to prove that  for any $t\in[0,T_1]$,
$x_0= \bar x_0 e_i \in J_i \cap \cG_R \setminus\{O\}$ 
and $r>0$ sufficiently small, the function~$u(\cdot,t)$ is Lipschitz continuous on $(\bar x_0-r,\bar x_0+r)e_i$ with a Lipschitz constant which depends only on the parameters of the problem and on $T_1$ and $R$ (it is independent of $\bar x_0$, $r$, $t$ and $i$).

For that, 
fix  some $r$, $0<r< \bar x_0/4$. Observe that $(\bar x_0-4r,\bar x_0+4r)e_i\subset J_i\setminus\{O\}$. Consider $\bar x,\bar x_1\in (\bar x_0-r,\bar x_0+r)$, with $\bar x\ne\bar x_1$ and $|\bar x-\bar x_1|\leq 2(T-t)V$, where $V$ is the constant found in Theorem~\ref{optcur_lip} for the set $\cG_{5R/4} \times[0,T_1]$. Set $x=\bar x e_i$, $x_1=\bar x_1 e_i$ and $\tau=\frac{|\bar x-\bar x_1|}{ 2 V}$. For $(y,\alpha)\in \Gamma^{\rm{opt}}_t[x]$, let $(y_1, \alpha_1)$ be the trajectory starting at $x_1$ at time~$t$ and associated  to the control
\begin{equation*}
\alpha_1(s)=\alpha(s)+(x-x_1)/\tau \quad \textrm{for }s\in[t,t+\tau],\qquad
\alpha_1(s)=\alpha(s) \quad \textrm{for }s\in(t+\tau,T].
\end{equation*}
From Theorem~\ref{optcur_lip}, $y$ does not reach the origin~$O$ before time $t+\frac{3r}{ V}$.
On the other hand, $\tau\leq \frac{r}{V}$.
Hence, in the time interval $(t,t+\tau)$, $y$ stays in $J_i\setminus\{O\}$. 

It is clear that  $y(\cdot)= y_1(\cdot)$ in $(t+\tau,T]$. We claim that
\begin{itemize}
\item[(i)] $(y_1, \alpha_1)\in \Gamma_t[x_1]$.
\item[(ii)] $d(y(\cdot),y_1(\cdot))\leq |\bar x-\bar x_1|$ in $(t,T]$
\end{itemize}
Let us prove $(i)$.  From the observation above, it is enough  to prove that $y_1(s)\in J_i$ for $s\in[t,t+\tau]$. We observe that
\begin{eqnarray*}
d(y_1(s),x_0)&\leq& d(y_1(s),x_1)+|\bar x_1-\bar x_0|\leq \int_t^s\left|\alpha(\theta)+\frac{x-x_1}{\tau}\right|d\theta +r\\
&\leq& \int_t^s|\alpha(\theta)|d\theta +\frac{s-t}{\tau}|\bar x-\bar x_1| +r\leq V(s-t) +|\bar x-\bar x_1| +r\\
&\leq& 4r,
\end{eqnarray*}
where the last inequality is due to our choice of $\tau$. 
The inequality found above yields that  $y_1(s)\in J_i$ for $s\in[t,t+\tau]$, then (i).

Let us now prove $(ii)$.
For $s\in (t+\tau,T]$, (ii) is obvious.  For $s\in (t,t+\tau]$, there holds
\begin{equation*}
d(y(s), y_1(s))=\left|(\bar x-\bar x_1)-\int_t^s\frac{\bar x-\bar x_1}{\tau}ds\right|=|(\bar x-\bar x_1)\frac{\tau-(s-t)}{\tau}|\leq |\bar x-\bar x_1|.
\end{equation*}
The claims $(i)$ and  $(ii)$ are proved.

By definition of $u$, and recalling that in the interval $(t,t+\tau)$ both $y$ and $y_1$ stay in $J_i\setminus\{O\}$,
\begin{equation}\label{eq:29dic_1}
u(x_1,t)-u(x,t)\leq \int_t^{t+\tau}\left(\frac{|\alpha_1(s)|^2}{2}-\frac{|\alpha(s)|^2}{2}+\ell_i(y_1(s),s)-\ell_i(y(s),s)\right)\, ds.
\end{equation}
The definition of $\alpha_1$ and Theorem~\ref{optcur_lip} imply that
\begin{eqnarray*}
\int_t^{t+\tau}\left(\frac{|\alpha_1(s)|^2}{2}-\frac{|\alpha(s)|^2}{2}\right)\, ds&\leq& \frac12\int_t^{t+\tau}\left(\frac{|\bar x-\bar x_1|^2}{\tau^2}+2\frac{|\bar x-\bar x_1||\alpha(s)|}{\tau}\right)\, ds\\
&\leq& \frac12 \frac{|\bar x-\bar x_1|^2}{\tau} +|\bar x-\bar x_1|V \\
&\leq&  2V|\bar x-\bar x_1|, 
\end{eqnarray*}
where the last inequality is due to the  choice of $\tau$.
On the other hand, assumption~\eqref{eq:9999} and point~$(ii)$ entail
\begin{equation*}
\int_t^{t+\tau}\left(\ell_i(y_1(s),s)-\ell_i(y(s),s)\right)\, ds\leq L_\ell|\bar x-\bar x_1|\tau =\frac{L_\ell |\bar x-\bar x_1|^2 }{2V} \le \frac{L_\ell r }{V} |\bar x-\bar x_1| 
\le \frac{L_\ell R }{4V}  |\bar x-\bar x_1| 
\end{equation*}
because $|\bar x-\bar x_1| \le 2r \le \bar x_0 /2 \le R/2 $.
The latter two inequalities and  \eqref{eq:29dic_1} yield
\begin{equation*}
u(x_1,t)-u(x,t)\leq \left(2V+\frac{L_\ell R}{4 V}\right)|\bar x-\bar x_1|.
\end{equation*}
Reversing the role of $x$ and $x_1$, we obtain the desired Lipschitz
continuity with constant $2V+ L_\ell R /4V$, and complete {\texttt Step 1}.

\texttt{Step 2.} We observe that the Lipschitz constant found in {\texttt Step 1} is independent of $\bar x_0$, provided that $\bar x_0\in \cG_R$. Hence, $u(\cdot,t)$ is Lipschitz continuous in $(\cG_R\cap J_i)\setminus\{O\}$ with the same Lipschitz constant as above.

\texttt{Step 3.} By the continuity of $u$ (see Proposition~\ref{prp:U_cont}), $u(\cdot, t)$ is Lipschitz continuous in $\cG_R$ with Lipschitz constant $2V+L_\ell R / 4V$. Note that this Lipschitz constant depends implicitly on $T_1$ through $V$.

\texttt{Step 4.} We now prove the Lipschitz continuity in time of $u$ for $t\in[0,T_1]$. 
Consider $x\in\cG_R$ and $t_1,t_2\in[0,T_1]$. Without loss of generality, we may assume that $t_1\leq t_2$.

Consider $(y,\alpha)\in \Gamma^{\rm{opt}}_{t_1}[x]$.
  Observe that  $y(t_2) \in \cG_{R+V T}$. Let $W\ge V$ be the constant found in Theorem ~\ref{optcur_lip} for the set $\cG _{5 (R+V T)/4} \times [0, T_1]$.  Obviously,
\begin{eqnarray*}
|u(x,t_2)-u(x,t_1)|\leq |u(x,t_2)-u(y(t_2),t_2)|+|u(y(t_2),t_2)-u(x,t_1)|.
\end{eqnarray*}
From \texttt{Step 3} and  Theorem~\ref{optcur_lip}, 
\begin{eqnarray*}
|u(x,t_2)-u(y(t_2),t_2)|&\leq&  \left (2W+\frac {L_\ell (R+VT) } {4W}\right) d(x,y(t_2))\leq (2W+ \frac {L_\ell (R+VT)} {4W})\int_{t_1}^{t_2}|\alpha(s)|\, ds\\
&\leq&  \left(2VW+ \frac {L_\ell (R+VT)}4 \right)|t_2-t_1|.
\end{eqnarray*}
On the other hand, the Dynamic Programming Principle (see Proposition~\ref{prp:prop_vf}) ensures that
\begin{equation*}
|u(y(t_2),t_2)-u(x,t_1)|\leq\int_{t_1}^{t_2}\left(\frac{|\alpha(s)|^2}{2}+|L(y(s),s)|\right)\, ds\leq \left(\frac{V^2}{2}+M_\ell\right)|t_2-t_1|.
\end{equation*}
From the latter three inequalities, we deduce that
\begin{equation*}
|u(x,t_2)-u(x,t_1)|\leq
\left(2VW+ \frac {V^2} 2  + \frac {L_\ell (R+VT)}4 +M_\ell \right)|t_2-t_1|.
\end{equation*}
Hence,  \texttt{Step 4} is done.\\
\texttt{Step 5.}  We achieve the proof by combining the results obtained in steps 3 and 4.
\end{proof}

If furthermore the terminal cost $g$ is continuous on $\cG$, then the   Lipschitz continuity of  $u$ w.r.t. $(x,t)$   holds  locally in $x$ and globally in  $t\in [0,T]$:
\begin{corollary}\label{cor:ulipT}
Under the assumptions of Theorem~\ref{optcur_lipT}, the value function~$u$ is locally Lipschitz continuous in~$\cG\times[0,T]$.
\end{corollary}
\begin{proof}
Since $u$ is continuous on $\cG \times [0,T]$, it is enough to repeat the 
proof of Proposition \ref{prop:loclip} using Theorem \ref{optcur_lipT} instead of Theorem~\ref{optcur_lip}.
\end{proof}

The following proposition, which will not be used in the remaining part of the paper, addresses
the local Lipschitz continuity of the value function   with respect to $x$  up to the horizon $T$,  provided that the terminal cost $g$ is Lipschitz continuous on $\cG$ and the running costs $\ell_i$ are Lipschitz continuous w.r.t. $x$, but without assuming $C^2$ continuity of the costs in $J_i\setminus \{O\}$. Note that its proof does not rely on the optimality conditions stated in Lemmas  \ref{lemma:EL} and \ref{lemma:trasver}, in contrast with Corollary \ref{cor:ulipT}.  
\begin{proposition}\label{prop:loclipT}
If the terminal cost $g$ is Lipschitz continuous in~$\cG$ with Lipschitz constant~$L_g$ and the costs $\ell_i$  are bounded ($\|\ell_i\|_\infty\leq M_\ell$) and Lipschitz continuous in~$x$ with Lipschitz constant~$L_\ell$, then, the value function is locally Lipschitz continuous with respect to~$x$ in $\cG\times [0,T]$. 
\end{proposition}
\begin{proof}
For what follows, let us fix $v$  an arbitrary positive constant.

There is no loss of generality in assuming that  $x_1$ and $x_2$ belong to the same edge, say $J_i$, i.e. $x_1=\bar x_1 e_i$ and $x_2=\bar x_2 e_i$. From Remark~\ref{rmk:bound_contr}, there exists $C>0$ such that  for every $(y,\alpha)\in \Gamma^{\rm{opt}}_t[x]$,  $\|\alpha\|_2\leq C$ and $y$ is $1/2$-H\"older continuous with H\"older constant $C$. Let us  distinguish  several cases.

\texttt{Case 1: $x_1,x_2\in J_i\setminus\{O\}$ with $ \bar x_1, \bar x_2\geq C(T-t)^{1/2}$.} Consider $(y_2,\alpha_2)\in \Gamma^{\rm{opt}}_t[x_2]$. Since $d(y_2(s),x_2)\leq C(T-t)^{1/2}$ for every $s\in[t,T]$, the control~$\alpha_2$ is also admissible  for $(x_1,t)$ because $d(x_1,O)\geq C(T-t)^{1/2}$. Let $y_1$ be the path starting from~$x_1$ at time~$t$ and associated to the control ~$\alpha_2$. For $s\in[t,T]$, both $y_2(s)$ and $y_1(s)$ belong to $J_i$, and $d(y_2(s),y_1(s))=d(x_2,x_1)=|\bar x_2-\bar x_1|$. By definition of~$u$, there holds
\begin{eqnarray*}
u(x_1,t)-u(x_2,t)&\leq&\int_t^T\left|\ell_i(y_1(s),s)-\ell_i(y_2(s),s)\right|ds + \left|g(y_1(T))-g(y_2(T))\right|\\
&\leq& (L_\ell T+L_g)\; d(x_1,x_2).
\end{eqnarray*}
The proof is completed by reversing the roles of $x_1$ and  $x_2$.

\texttt{Case 2: $x_1,x_2\in J_i$ with $\bar x_1\leq 2C(T-t)^{1/2}$ and  $x_2=O$.} For $(y_2,\alpha_2)\in \Gamma^{\rm{opt}}_t[x_2]$, set
\begin{equation*}
\alpha_1(s)=-\max\{v,|\alpha_2(s)|\} e_i,\qquad \textrm{for }s\in[t,t_*],
\end{equation*}
where $v$ is the constant fixed above.
Let $y_1$ be the path defined on $[t,t_*]$ such that $y_1(t)=x_1$ and corresponding to the control $\alpha_1$. The time $t_*$ is defined by 
\begin{equation*}
t_*=\min\Bigl\{T,\: \: \: \min\Bigl\{s\in[t,T]\,:\,y_1(s)=y_2(s) \Bigr\}, \:\:\: \min\Bigl\{s\in[t,T]\,:\, y_1(s)=O\Bigr\}\Bigr\}.
\end{equation*}
Then 
\begin{equation}\label{eq:2feb_1}
t_*-t\leq \frac {\bar x_1} v=\frac {d(x_1,x_2)}v.
\end{equation}
because  $y_1(s)\not =O $ for $s\in [t,t_*)$ and $\alpha_1(s)\cdot e_i\leq -v$ for  $s\in [t,t_*]$.

The definition of $\alpha_2$ also implies that
\begin{equation}\label{eq:2feb_2}
d(y_1(s),y_2(s))\leq d(y_1(s),O)+d(y_2(s),O)\leq \bar x_1-\int_t^s|\alpha_2(\tau)|d\tau+\int_t^s|\alpha_2(\tau)|d\tau=d(x_1,x_2),
\end{equation}
for $s\in [t,t_*]$. Again from \eqref{eq:2feb_1}, 
\begin{equation}\label{eq:2feb_4}
\int_t^{t_*}\left[\frac{|\alpha_1(s)|^2}{2}-\frac{|\alpha_2(s)|^2}{2}+L(y_1(s),s)-L(y_2(s),s)\right]ds\leq \left[\frac {v^2} 2+2M_\ell\right]\frac {d(x_1,x_2)}{v}.
\end{equation}
The following arguments will differ  according to the value of $t_*$.
\\
\texttt{Subcase 2-a: $t_*=T$.} From \eqref{eq:2feb_2} and the Lipschitz continuity of $g$,
\begin{equation*}
g(y_1(T))-g(y_2(T))\leq L_g d(x_1,x_2).
\end{equation*}
This inequality and \eqref{eq:2feb_4} yield 
\begin{equation*}
u(x_1,t)-u(x_2,t)\leq \left(\frac {4M_\ell+v^2} {2v}+L_g\right) 
d(x_1,x_2) .
\end{equation*}
\texttt{Subcase 2-b: $t_*=\min\{s\in[t,T]\;:\;y_1(s)=y_2(s) \}<T$.} In this case, set $\alpha_1(s)=\alpha_2(s)$ for $s\in (t_*,T]$. Clearly, $y_1(s)=y_2(s)$ for $s\in (t_*,T]$. This and \eqref{eq:2feb_4} imply
\begin{equation*}
u(x_1,t)-u(x_2,t)\leq (2M_\ell+v^2/2)  \frac {| \bar x_2-\bar x_1|} v.
\end{equation*}

\texttt{Subcase 2-c: $t_*=\min\{s\in[t,T]:\, y_1(s)=O\}<\min\{T,\min\{s\in[t,T]:\, y_1(s)=y_2(s) \}\}$.} 
Then,  $y_2(t_*)$ belongs to some $J_j\setminus\{O\}$ with $j\ne i$ and $y_1(t_*)=O$. Indeed, should 
$y_2(t_*)$ belong to $J_i\setminus\{O\}$, then there would exist a time $\tau\in (t, t_*) $ such that 
$y_1(\tau)=y_2(\tau)$, in contradiction with the definition of $t_*$, and $y_1(t_*)=y_2(t_*)=O$ has been addressed in \texttt{Subcase 2-b}.

Let us  define $(y_1,\alpha_1)$ by 
\begin{equation*}
\alpha_1(s) = |\alpha_2(s)|e_j \qquad \textrm{for }t\in (t_*,t_{**}] ,
\end{equation*}
where $t_{**}=\min\Bigl\{T,\min\Bigl\{s\in(t_*,T]: y_1(s)=y_2(s)\Bigr\}\Bigr\}$. Note that, in~$[t_*,t_{**}]$, both $y_1(\cdot)$ and $y_2(\cdot)$ belong to~$J_i$ with $y_2(\cdot)\ne O$. 
Here again, the arguments differ according to the cases 
 in the definition of~$t_{**}$.\\
\texttt{Subcase 2-c1: $t_{**}=T$.} For $s\in[t_*,t_{**}]$, there holds 
\begin{eqnarray}\notag
d(y_1(s),y_2(s))&=&y_2(t_*)\cdot e_j+\int_{t_*}^s \alpha_2(\tau)\cdot e_j d\tau-\int_{t_*}^s |\alpha_2(\tau)| d\tau\\ \label{eq:2feb_5}
&\leq& y_2(t_*)\cdot e_j= d(y_2(t_*),y_1(t_*))\leq d(x_1,x_2),
\end{eqnarray}
the last inequality stemming from \eqref{eq:2feb_2}. Taking into account estimate~\eqref{eq:2feb_4}, we get
\begin{eqnarray*}
u(x_1,t)-u(x_2,t)&\leq& (2M_\ell+\frac {v^2} 2)d(x_1,x_2)+\int_{t_*}^{t_{**}}\left[L(y_1(s),s)-L(y_2(s),s)\right]ds\\
&&\qquad  +g(y_1(T))-g(y_2(T))\\
&\leq &(L_g+2M_\ell+\frac {v^2} 2)d(x_1,x_2)+\int_{t_*}^{t_{**}}\left[\ell_j(y_1(s),s)-\ell_j(y_2(s),s)\right]ds
\end{eqnarray*}
where the last inequality is due to the Lipschitz continuity of~$g$ and~\eqref{eq:2feb_5}. Then the Lipschitz continuity of~$\ell_j$ and  \eqref{eq:2feb_5} again lead to
\begin{equation}\label{eq:2feb_6}
u(x_1,t)-u(x_2,t)\leq(TL_\ell+L_g+2M_\ell+\frac {v^2} 2)|\bar x_2-\bar x_1|.
\end{equation}
\texttt{Subcase 2-c2: $t_{**}<T$.} Hence, $y_1(t_{**})=y_2(t_{**})$. Set $(y_1,\alpha_1)=(y_2,\alpha_2)$   on $(t_{**},T]$.
The same calculations as in \texttt{Subcase 2-c1} yield  \eqref{eq:2feb_6}.\\
\texttt{Case 3: $x_1,x_2\in J_i$ with $0<\bar x_2<\bar x_1\leq 2C(T-t)^{1/2}$.}
Consider $(y_2,\alpha_2)\in \Gamma^{\rm{opt}}_t[x_2]$ and define the path $y_1$  starting at $x_1$ at time $t$
and corresponding to the control
\begin{equation*}
\alpha_1(s)=-|\alpha_2(s)|e_i \qquad \textrm{for }s\in[t,t_*],
\end{equation*}
where
\begin{equation*}
t_*=\min\Bigl\{T,\min\Bigl\{s\in[t,T]\;:\; y_2(s)=O\Bigr\},\min\Bigl\{s\in[t,T]\;:\; y_1(s)=y_2(s)\Bigr\}\Bigr\}.
\end{equation*}
Observe that, for  $s\in [t,t_*)$, both $y_1(s)$ and $y_2(s)$ belong to $J_i\setminus\{O\}$, and
\begin{equation}\label{eq:2feb_7}
d(y_1(s),y_2(s))\leq \bar x_1-\int_t^s|\alpha_2(\tau)|d\tau-\bar x_2-\int_t^s\alpha_2(\tau)\cdot e_id\tau\leq \bar x_1-\bar x_2=d(x_1,x_2).
\end{equation}
This implies
\begin{equation}\label{eq:2feb_8}
\int_t^{t_*}\left[\frac{|\alpha_1(s)|^2}{2}-\frac{|\alpha_2(s)|^2}{2}+L(y_1(s),s)-L(y_2(s),s)\right]ds\leq L_\ell Td(x_1,x_2).
\end{equation}
Let us argue differently according to the cases in the definition of $t_*$.\\
\texttt{Subcase 3-a: $t_*=T$.} Arguing as in \texttt{Subcase 2-a} and using \eqref{eq:2feb_7}-\eqref{eq:2feb_8} leads to the desired result.\\
\texttt{Subcase 3-b: $t_*=\min\Bigl\{s\in[t,T]\;:\; y_2(s)=O\Bigr\}<T$.} Combining the conclusions in \texttt{Case 2} and \eqref{eq:2feb_7}-\eqref{eq:2feb_8} leads to the desired result.\\
\texttt{Subcase 3-c: $t_*=\min\Bigl \{s\in[t,T]\;:\; y_1(s)=y_2(s)\Bigr\}<T$.} The conclusion follows by setting  $(y_1,\alpha_1)=(y_1,\alpha_1)$ on $(t_*,T]$. \\
\texttt{Case 4: $x_1,x_2\in J_i$ with $0<\bar x_1<\bar x_2\leq 2C(T-t)^{1/2}$.} Consider $(y_2,\alpha_2)\in \Gamma^{\rm{opt}}_t[x_2]$ and the trajectory $(y_1,\alpha_1)$ such that  $\alpha_1(s)=|\alpha_2(s)|e_i$ on $[t,t_*]$, where
\begin{equation*}
t_*=\min\Bigl\{T,\min\Bigl\{s\in[t,T]\;:\; y_1(s)=y_2(s)\Bigr\}\Bigr\}.
\end{equation*}
Note that, in $[t,t_*]$, $\alpha_1(s)\cdot e_i\geq 0$. Hence, $y_2$ cannot hit the vertex $O$ before crossing $y_1$. For $s\in[t,t_*]$ and
\begin{equation}\label{eq:2feb_9}
d(y_1(s),y_2(s))=x_2+\int_t^s\alpha_2(\tau)d\tau-x_1-\int_t^s|\alpha_2(\tau)|d\tau\leq x_2-x_1=d(x_1,x_2).
\end{equation}
This implies
\begin{equation}\label{eq:2feb_10}
\int_t^{t_*}\left[\frac{|\alpha_1(s)|^2}{2}-\frac{|\alpha_2(s)|^2}{2}+L(y_1(s),s)-L(y_2(s),s)\right]ds\leq L_\ell Td(x_1,x_2).
\end{equation}
The  arguments differ according to the cases in the definition of $t_*$.\\
\texttt{Subcase 4-a: $t_*=T$.} Arguing as in \texttt{Subcase 2-a} and using by \eqref{eq:2feb_9}-\eqref{eq:2feb_10} yields the desired result.\\
\texttt{Subcase 4-b: $t_*=\min\Bigl\{s\in[t,T]\;:\; y_1(s)=y_2(s)\Bigr\}$.} The result follows from the same arguments as in \texttt{Subcase 3-c} using ~\eqref{eq:2feb_9}-\eqref{eq:2feb_10}. The proof is complete.
\end{proof}

\subsection{Local semi-concavity of the value function away from the vertex}
Here, we wish to prove that the value function~$u$ is semi-concave with respect to~$x$ with a linear modulus of semi-concavity, locally in $J_i\setminus \{O\}$ and for $t$ bounded away from the horizon $T$. For the definition  of semi-concavity and the main related properties, we refer the reader to the monograph~\cite{CS}. 
\begin{proposition}\label{prop:semiconc_loc}
We keep the assumptions of Theorem~\ref{optcur_lip} and assume furthermore that for all $i$ and $s$, $\ell_i(\cdot, s)\in C^{1,1}(J_i)$ and that $\|\partial_{xx}^2 \ell_i\|_{L^\infty(J_i\times [0,T])}<\infty$. Consider $t\in[0,T)$ and $x,y\in J_i\setminus\{O\}$ with $0<r\leq |x|,|y|\leq R$. Under the same assumptions as in Theorem~\ref{optcur_lip}, there exists a constant $C$ (depending on $r$, $R$ and on $T-t$) such that
\begin{equation*}
\lambda u(x,t)+(1-\lambda)u(y,t)-u(\lambda x+(1-\lambda),t)\leq C\lambda(1-\lambda)|x-y|^2, \qquad \forall\lambda \in[0,1].
\end{equation*}
\end{proposition}
The main technical part of the proof of Proposition \ref{prop:semiconc_loc} makes use of the following lemma. Recall that by \eqref{EL_giugno}  $\alpha(t)$ is well defined.
\begin{lemma}\label{lemma:CC2_3.1}
 Consider $x\in J_i\setminus\{O\}$ for some $i=1,\dots,N$, $t\in[0,T)$ and $(y,\alpha)\in\Gamma^{\rm{opt}}_t[x]$. Set $x=\bar x e_i$. Under the same assumptions as in~Proposition \ref{prop:semiconc_loc},  there exists a constant $C$ (depending on  $|\bar x|$ and on $T-t$) such that
\begin{equation}\label{eq:SCconc1}
u(x+h,t)-u(x,t)+\alpha(t)\cdot h\leq C|h|^2
\end{equation}
for any $h=\bar h e_i$ with $|\bar h|$ sufficiently small.
\end{lemma}
\begin{proof}[Proof of Proposition~\ref{prop:semiconc_loc}]
Our arguments are reminiscent of \cite[Corollary 3.2]{CCC2}.  Lemma~\ref{lemma:CC2_3.1} implies that there exists a constant $C$ (depending on $r$, $R$ and on $T-t$) such that
\begin{equation*}
\frac12 u(x+h,t) + \frac12 u(x-h,t) - u(x,t)\leq C|h|^2, \qquad\forall h,\, |h|\leq |x|.
\end{equation*}
Since $u$ is continuous (see Proposition~\ref{prp:U_cont}),  the latter inequality is equivalent to \eqref{eq:SCconc1}, see \cite[Theorem 2.1.10]{CS}.
\end{proof}
\begin{proof}[Proof of Lemma \ref{lemma:CC2_3.1}]
The arguments are reminiscent of the proof of \cite[Lemma 3.1]{CCC2}.  Consider $t$, $x$, $(y,\alpha)$  as in the statement. 

Take $h=\bar h e_i$ with $|h|< \bar x/2$, and set
\begin{equation*}
t_*=\left(\frac{T-t}{2}\right)\wedge \left(\frac{\bar x}{2V}\right),
\end{equation*}
where $V$ is the constant associated to $\cG_{\bar x}$, see \eqref{eq:cGdelta}.
Consider the trajectory $(y_h,\alpha_h)$ starting at $x+h$ at time $t$ with  the control
\begin{equation}
\label{eq:SCC1}
\alpha_h(s)=\alpha(s)-h/t_*\quad\textrm{for }s\in[t,t+t_*],\quad \hbox{and} \quad \alpha_h(s)=\alpha(s) \quad\textrm{for }s\in[t+t_*,T].
\end{equation}
One easily checks that
\begin{equation}
\label{eq:SCC2}
y_h(s)=y(s)+h\frac{t_*-s+t}{t_*}\quad\textrm{for }s\in[t,t+t_*],\quad \hbox{and} \quad y_h(s)=y(s) \quad\textrm{for }s\in[t+t_*,T],
\end{equation}
and that $y_h(s)\in J_i\setminus\{O\}$ for all $s\in [t,T]$.
Therefore,
\begin{equation*}
u(x+h,t)-u(x,t)\leq \int_t^{t+t_*}\left(\frac{|\alpha_h(s)|^2-|\alpha(s)|^2}{2}+\ell_i(y_h(s),s)-\ell_i(y(s),s)\right)ds.
\end{equation*}
On the other hand, since $\alpha|_{[t,t+t_*]}\in W^{1,\infty}$, \begin{eqnarray*}
\alpha(t)\cdot h&=&-\int_t^{t+t_*}\frac{d}{ds}\left[\alpha(s)\cdot(y_h(s)-y(s))\right]\, ds\\
&=&-\int_t^{t+t_*}\left[\alpha'(s)\cdot(y_h(s)-y(s))+\alpha(s)\cdot(\alpha_h(s)-\alpha(s))\right]\, ds\\
&=&-\int_t^{t+t_*}\left[\partial_x\ell_i(y(s),s)e_i\cdot(y_h(s)-y(s))+\alpha(s)\cdot(\alpha_h(s)-\alpha(s))\right]\, ds,
\end{eqnarray*}
where the latter identity is due to Euler-Lagrange condition \eqref{EL_giugno}.

Combining the latter two observations leads to 
\begin{multline*}
u(x+h,t)-u(x,t)+\alpha(t)\cdot h\leq \int_t^{t+t_*}\frac{|\alpha_h(s)-\alpha(s)|^2}{2}\, ds+\int_t^{t+t_*}\ell_i(y_h(s),s)-\ell_i(y(s),s) ds \\ - \int_t^{t+t_*}\partial_x\ell_i(y(s),s)e_i\cdot(y_h(s)-y(s))ds.
\end{multline*}
In what follows, $C$ is a constant which may change from line to line  and depends only on $\bar x$ and $T-t$. The regularity of $\ell_i$ implies
\begin{eqnarray}\notag
u(x+h,t)-u(x,t)+\alpha(t)\cdot h&\leq& \int_t^{t+t_*}\left(\frac{|\alpha_h(s)-\alpha(s)|^2}{2}+\frac{\|\partial^2_{xx}\ell_i\|_\infty}{2}|y_h(s)-y(s)|^2\right)ds\\ \label{eq:semic}
&\leq&
C\|y_h-y\|_{W^{1,2}([t,t+t_*],\cG)}^2\\
\notag &\leq&
C |h|^2 ,
\end{eqnarray}
the last line being obtained thanks to \eqref{eq:SCC1} and  \eqref{eq:SCC2}. The desired inequality is proved.

\end{proof}

\subsection{Regularity of $u$ along optimal trajectories and optimal synthesis}
Here, we investigate some regularity properties of $u$ in the interiors of the edges. The following lemma is reminiscent of \cite[Lemma 4.9]{C}. 
\begin{lemma}\label{lemma:note4.9}
 Consider $t\in[0,T)$, $x\in J_i\setminus \{O\}$ for some $i=1,\dots,N$, $(y,\alpha)\in \Gamma^{\rm{opt}}_t[x]$ and set
\begin{equation*}
t_*=T\wedge \min \{\tau\in[t,T]\;:\; y(\tau)=O\}.
\end{equation*}
Under the same assumptions as in Proposition \ref{prop:semiconc_loc}, the following properties hold:
\begin{itemize}
\item[(i)] 
For any $s\in(t,t_*)$, $\alpha_{|(s,t_*)}$ is the unique optimal control for $u(y(s),s)$ up to time $t_*$. In other words for any $(y_1,\alpha_1)\in \Gamma^{\rm{opt}}_s[y(s)]$, $\alpha_1$ coincides with $\alpha$ in $(s,t_*)$
\item[(ii)] $\partial_x u(x,t)$ exists if and only if the set
\begin{equation*}
\mathcal{A}(x)=\left\{\alpha(t)\;:\; (y,\alpha)\in \Gamma^{\rm{opt}}_t[x] \right\}
\end{equation*}
is as singleton. Moreover, in this case, $\mathcal{A}=\Bigl\{-\partial_x u(x,t)e_i\Bigr\}$. 
\item[(iii)] For any $s\in(t,t_*)$, the function $u(\cdot,s)$ is differentiable at $y(s)$  with $\partial_x u(y(s),s)e_i=-\alpha(s)$. 
\end{itemize}
\end{lemma}
\begin{proof}
$(i)$. The arguments are similar to  the proof of \cite[Lemma 4.9-(1)]{C}, so we refer the reader to that paper for the details and focus only on the main new aspects.

For any $s\in(t,t_*)$, consider $(y_1,\alpha_1)\in \Gamma^{\rm{opt}}_s[y(s)]$ and set $t_{*,1}=T\wedge \min\{\tau\in[t,T]\;:\; y_1(\tau)=O\}$. For $0<h<(s-t)\wedge(t_*\wedge t_{*,1}-s)$, we consider the following control
\begin{equation*}
\alpha_h(\tau)=\left\{\begin{array}{ll}
\alpha(\tau)&\qquad\textrm{if }\tau \in[t,s-h]\\
\frac{y_1(s+h)-y(s-h)}{2h}&\qquad\textrm{if }\tau \in(s-h, s+h)\\
\alpha_1(\tau)&\qquad\textrm{if }\tau \in[s+h,T]
\end{array}\right.
\end{equation*}
and the corresponding trajectory~$(y_h,\alpha_h)$ which is admissible for $u(x,t)$, from the choice of $h$. Let $(y_0,\alpha_0)$ stand for the concatenation of $(y,\alpha)$ and $(y_1,\alpha_1)$ at time $s$. From Remark~\ref{rmk:concat_opt}, $(y_0,\alpha_0)\in  \Gamma^{\rm{opt}}_t[x]$. Comparing the costs associated  $(y_0,\alpha_0)$ and to $(y_h,\alpha_h)$
and letting $h$ tend to $0$ permits to prove that   $\alpha(s)=\alpha_1(s)$, see \cite{C}. Then, from Lemma~\ref{lemma:EL}, $y(\cdot)$ and $y_1(\cdot)$ satisfy the same second order differential equation with the same initial conditions: $y(s)=y_1(s)$ and $y'(s)=\alpha(s)=\alpha_1(s)=y_1'(s)$. Therefore, $y(\tau)=y_1(\tau)$ and $\alpha(\tau)=\alpha_1(\tau)$ for $\tau\in (s,t_*)$,  and  $t_*=t_{*,1}$.\\
$(ii)$. Assume that $\partial_x u(x,t)$ exists. We wish to prove that $\mathcal{A}(x)$ is a singleton.\\
Let $(y,\alpha)$ belong to $\Gamma^{\rm{opt}}_t[x]$. By the  local semi-concavity of $u$, see  Lemma \ref{lemma:CC2_3.1},
\begin{equation*}
u(x+h,t)-u(x,t)+ \alpha(t) h\leq C h^2\qquad \textrm{for } h {\textrm{ sufficiently small.}} 
\end{equation*}
Then, from \cite[Proposition3.3.4]{CS}, we infer: $-\bar \alpha(t)\in D^+u(x,t)$. Moreover, since $u(\cdot, t)$ is differentiable at $x$, $D^+u(x,t)$ is a singleton. Hence, $\mathcal{A}(x)$ is the singleton $\{-\partial_x u(x,t) e_i\}$.

\medskip

Conversely, assume that $\mathcal{A}(x)$ is a singleton. We wish to prove that $u$ is differentiable at $(x,t)$. To this end, we  claim that, if $p\in D^* u(x,t)$, then the unique solution to 
\begin{equation}\label{Clemma4.9_1}
\xi''(\tau)=\partial_x \ell_i (\xi(\tau),\tau)e_i,\qquad \xi(t)=x,\qquad \xi'(t)=-p e_i
\end{equation}
is such that there exists $(y,\alpha)\in\Gamma^{\rm{opt}}_t[x]$ with $y(\tau)=\xi(\tau)$ for $0\leq \tau\leq t_{*,\xi}:=T\wedge \min\{\tau\in[t,T]\;:\; \xi(\tau)=O\}$.
\\
Before proving the claim, let us first see how to use this intermediate result to conclude: since $\mathcal{A}(x)$ is a singleton, if the claim is true, then  also $D^*u(x,t)$ is a singleton and it coincides with $\mathcal{A}(x)$. Then \cite[Proposition 3.3.4]{CS} yields that $u$ is differentiable at $(x,t)$ with $\partial u(x,t)e_i=-\alpha(t)$ for every $(y,\alpha)\in\Gamma^{\rm{opt}}_t[x]$ and the proof of (ii) is complete.
\\
There remains to prove the claim above:
 since $p\in D^* u(x,t)$, there exists a sequence $\{x_n\}_{n\in\N}$ with $x_n\to x$ and $\partial_x u(x_n,t)\to p$ as $n\to\infty$. Consider the unique solution to 
\begin{equation}\label{Clemma4.9_2}
\xi_n''(\tau)=\partial_x \ell_i (\xi_n(\tau),\tau)e_i,\qquad \xi_n(t)=x_n,\qquad \xi_n'(t)=-\partial_x u(x_n,t) e_i.
\end{equation}
Since $u$ is differentiable at $(x_n,t)$, we have 
already proved  that $\mathcal{A}(x_n)$ is the singleton $\{-\partial_x u(x_n,t)e_i\}$. On the other hand, from Lemma~\ref{lemma:EL}, any trajectory $(y_n,\alpha_n)\in\Gamma^{\rm{opt}}_t[x_n]$ satisfies~\eqref{Clemma4.9_2} on $[t,t_{*,n})$ where $t_{*,n}=T\wedge \min\{\tau\in[t,T]\;:\; y_n(\tau)=O\}$. Observe now that, from Theorem~\ref{optcur_lip}, there exists $t_{*,\textrm{min}}>t$ such that $t_{*,n}\geq t_{*,\textrm{min}}$ for any $n$. Hence, $y_n(\tau)=\xi_n(\tau)$ for $\tau\in[t,t_{*,\textrm{min}}]$. From the uniform Lipschitz continuity of optimal trajectories (see Theorem~\ref{optcur_lip}), we deduce that $y_n$ uniformly converges to $y$ as $n\to\infty$. Next, Proposition~\ref{prp:prop1} ensures that there exists a measurable function~$\alpha$ such that $(y,\alpha)\in\Gamma^{\rm{opt}}_t[x]$. Passing to the limit in~\eqref{Clemma4.9_2}, we infer that $y(\tau)=\xi(\tau)$ in $[t,t_{*,\textrm{min}}]$. The  claim is proved.

$(iii)$. It is enough to combine the previous two statements (see also \cite[Remark 4.10]{C}). 
\end{proof}
\begin{corollary}\label{cor:10000}
 Consider two optimal trajectories $\gamma_i\in\Gamma^{\textrm{opt}}[x_i]$ such that $\gamma_1(t)=\gamma_2(t)\in J_k\setminus\{O\}$ for some  $t\in(0,T)$. Let $I_i$,  $i=1,2$, be the largest open interval containing $t$ such that $\gamma_i(s)\in J_k\setminus\{O\}$ for $s\in I_i$. Under the same assumptions as in Proposition~\ref{prop:semiconc_loc}, $I_1=I_2$.
\end{corollary}
\begin{proof}
There exists $\delta>0$ such that both $\gamma_1(s)$ and $\gamma_2(s)$ lie in $J_k\setminus\{O\}$ for $s\in(t-\delta,t+\delta)$. Let us prove first that $\gamma_1$ and $\gamma_2$ coincide in $ (t,t+\delta)$. For that, let $\gamma_3$ be the concatenation of $\gamma_{1|_{[0,t]}}$ and $\gamma_{2|_{[t,T]}}$. From Lemma~\ref{lemma:note4.9}-($i$), 
  $\gamma_1=\gamma_3$ in $(t-\delta, t+\delta)$. This implies that $\gamma_1=\gamma_2$ in $(t, t+\delta)$.\\
As a second step, from the latter result and Euler-Lagrange optimality condition, we deduce that $\gamma_1$ and $\gamma_2$ coincide also in $(t-\delta,t)$.\\
By a standard connexity argument, $I_1=I_2$ and $\gamma_1$ and $\gamma_2$ coincide in this interval.
\end{proof}
We now tackle the counterpart of \cite[Lemma 4.11]{C} on optimal synthesis in the time interval in which the trajectory remains in the interior of  a given edge. We first need the following definition:
\begin{definition}\label{def:traj_opt_tt1}
Consider $(x,t)\in\cG\times [0,T]$ and $t_1\in (t,T)$. We say that the trajectory $(y,\alpha)\in \Gamma_{t,t_1}[x]$ is optimal for $u(x,t)$ on the interval $(t,t_1)$ if there exists $(\tilde y,\tilde\alpha)\in\Gamma^{\rm{opt}}_t[x]$ with $(y,\alpha)=(\tilde y,\tilde \alpha)$ on $(t,t_1)$.
\end{definition}
\begin{lemma}\label{lemma:OS}
The assumptions are the same as in~Proposition \ref{prop:semiconc_loc}. Consider $t\in[0,T)$, $x\in J_i\setminus \{O\}$ for some $i=1,\dots,N$.

If $u(\cdot,t)$ is differentiable at $x$, then 
there is a unique $t_*\in (t,T]$ and a unique $y$ such that
\begin{equation}\label{eq:C_30}
y'(s)=-\partial_x u(y(s),s)\qquad \textrm{a.e. in }(t,t_*),\quad  \hbox{and} \quad y(t)=x,
\end{equation}
and
$t_*=T\wedge \min\{\tau\in[t,T]\;:\; y(\tau)=O\}$.

The trajectory $(y,y')$ is optimal for $u(x,t)$ on the interval $(t,t_*)$ in the sense of Definition~\ref{def:traj_opt_tt1}.

\end{lemma}
\begin{proof}

The first part of the statement is a consequence of Lemma~\ref{lemma:note4.9}-(ii) and -(iii).

  The arguments for proving the optimality of $y$ on $(t,t_*)$   are reminiscent  of~\cite[Lemma 4.11]{C}. Hence, we focus on the main new aspects and refer the reader to \cite{C} for the details. From Proposition~\ref{prop:loclip}, $u$ is Lipschitz continuous on each interval $[t,t_1]\subset[t,T)$. Hence, also $y$ is Lipschitz continuous on $[t,t_1]$.

The same arguments as  in the proof of~\cite[Lemma 4.11]{C} yield
\begin{equation*}
\frac{d}{ds} u(y(s),s)=-\frac12|y'(s)|^2-\ell_i(y(s),s),
\end{equation*}
for a.a. $s\in(t,t_*)$. Integrating this inequality on $(t,t_*)$ leads to
\begin{equation*}
u(x,t)=u(x,t)+\int_{t}^{t_*}\left(\frac{|y'(s)|^2}{2} +\ell_i(y(s),s)\right)\, ds.
\end{equation*}
From Remark \ref{rmk:quasi_dpp}, we infer that $(y,y')$ is optimal on $(t,t_*)$.
\end{proof}

\subsection{ The PDE satisfied by $u$ on  $\cG\times [0,T)$}

The aim of this paragraph is to prove that 
the value function is the unique viscosity solution 
(in a suitable sense that will defined) 
of  Hamilton-Jacobi equations in the network, with a 
suitable {\sl transmission} condition at the origin. 

\subsubsection{Relaxed controls}
To start with, let us recall the definition of  the relaxed controls introduced  in \cite{ACCT}. They will be  used  to construct the Hamiltonians involved in the Hamilton-Jacobi equations on $\cG\times [0,T)$. For $x\in J_i$, $i=1,\dots,N$, set 
\begin{equation*}
FL_i(x,t)=\co\{(a,a^2/2)\;:\; a\in\R\}, \quad \hbox{and} \quad FL_i^{\downarrow}(x,t)=FL_i(x,t)\cap\{(\zeta,\xi)\in\R^2\;:\; \zeta\geq 0\}.
\end{equation*}
Here, the notation  {\sl co} stands is used for the convex hull. It can be easily checked that  \begin{equation*}
FL_i(x,t)=\{(\zeta,\xi)\in\R^2\;:\; \xi\geq \zeta^2/2\},\quad \hbox{and} \quad FL_i^{\downarrow}(x,t):=\{(\zeta,\xi)\in\R^2\;:\; \zeta\geq 0,\, \xi\geq \zeta^2/2\}.
\end{equation*}
For $x=O$, set
\begin{equation*}
FL(O,t)=\bigcup_{i=1}^N FL_i^{\downarrow}(O,t).
\end{equation*}

\subsubsection{Hamiltonians} For $x\in J_i$, $i=1,\dots,N$, $t\in[0,T]$, $\bar p\in\R$, $p=(p_1,\dots,p_N \in\R^N$, set
\begin{eqnarray*}
H_i(x,t,\bar p)&=&\sup_{(\zeta,\xi)\in FL_i(x,t)}\{-\bar p\zeta-\xi-\ell_i(x,t)\},\\
H_i^{\downarrow}(x,t,\bar p)&=&\sup_{(\zeta,\xi)\in FL_i^{\downarrow}(x,t)}\{-\bar p\zeta-\xi-\ell_i(x,t)\},\\
H_O(t,p)&=&\max\Bigl\{-\ell_*(t),\:\max_{i=1,\dots,N}\left\{-\ell_i(O,t)\right\},\:\max_{i=1,\dots,N}\left\{H_i^{\downarrow}(O,t,p_i)\right\}\Bigr\}\\
&=&\max\Bigl\{-\ell_O(t),\:\max_{i=1,\dots,N}\left\{H_i^{\downarrow}(O,t,p_i)\right\}\Bigr\}.
\end{eqnarray*}
Elementary calculus  yields 
\begin{eqnarray}\label{eq:1dic_easy0}
H_i(x,t,\bar p)&=&\sup_{a\in\R}\left\{-\bar p a-\frac{|a|^2}2-\ell_i(x,t)\right\}=\frac{|p|^2}2-\ell_i(x,t)\quad\forall x\in J_i,\\ \label{eq:1dic_easy}
H_i^{\downarrow}(O,t,\bar p)&=&\max_{\bar\alpha\geq 0}\{-\bar \alpha \bar p-\ell_i(O,t)-|\bar\alpha|^2/2\}=
\left\{\begin{array}{ll}
\frac{|\bar p|^2}{2}-\ell_i(O,t)&\textrm{if }\bar p\leq 0,\\
-\ell_i(O,t)&\textrm{if }\bar p> 0.
\end{array}\right.
\end{eqnarray}

\subsubsection{Hamilton-Jacobi equations on $\cG\times [0,T)$}
We are interested in the system of first-order PDEs on $\cG\times (0,T)$:
\begin{equation}\label{HJ}
\left\{\begin{array}{rcll}
-\partial_t u+ H_i(x,t,Du)&=&0,&\qquad\textrm{if } x\in J_i\setminus\{O\},\\
-\partial_t u+ H_O(t,Du)&=&0,&\qquad\textrm{if } x=O,
\end{array}\right.
\end{equation}
where $Du(x,t) $ is defined in~\eqref{eq:def_deriv} and is a $1$-dimensional 
(resp. $N$-dimensional)
object if $x\in J_i\setminus\{O\}$ (resp. $x=O$).

\subsubsection{Viscosity solution of  \eqref{HJ}}
\begin{definition}
A function $u\in C(\cG\times(0,T))$ is a viscosity subsolution (resp. supersolution) of \eqref{HJ} if for every function $\varphi\in C^1(\cG\times(0,T))$ touching~$u$ from above (resp. below) at $(x,t)\in\cG\times(0, T)$, there holds
\begin{equation*} 
\begin{array}{rcll}
-\partial_t \varphi(x,t)+ H_i(x,t,D\varphi) &\leq& 0\quad (\textrm{resp. }\geq 0)&\qquad\textrm{if } x\in J_i\setminus\{O\},\\
-\partial_t \varphi+ H_O(t,D\varphi)&\leq& 0\quad (\textrm{resp. }\geq 0)&\qquad\textrm{if } x=O.
\end{array}
\end{equation*}
A function $u\in C(\cG\times(0,T))$ is a viscosity solution of \eqref{HJ} if it is both a viscosity subsolution and a viscosity supersolution of \eqref{HJ}.
\end{definition}

\subsubsection{Main result} 
\begin{theorem}\label{thm:HJ}
Under assumptions [H0] and [H1], the value function $u$ defined in~\eqref{eq:4} is a viscosity solution of \eqref{HJ} in $\cG\times (0,T)$. Moreover, for all $x\in \cG$, $t\mapsto u(x,t)$ is continuous in $ [0,T]$ and $u(x,T)=g(x)$. 
\end{theorem}
\begin{proof}
We borrow some arguments from the proof of~\cite[Theorem 6.4]{IM}. Clearly, the standard theory on viscosity solutions can be applied in $\cG\setminus \{O\}$, so it suffices to focus on the origin $O$.

\texttt{Step 1: $u$ is a supersolution at $O$}. Let $\varphi \in C^1(\cG\times[0,T])$ be a function touching~$u$ from below at $(O,\bar t)$, for some $\bar t\in(0,T)$. Without loss of generality, since $u$ is bounded, we may assume that $u-\varphi$ achieves a global minimum at $(O,\bar t)$ with value $0$, i.e. $\varphi(x,t)\leq u(x,t)$ $\forall (x,t)\in\cG\times[0,T]$ and $\varphi(O,\bar t)= u(O,\bar t)$. Let $(y,\alpha)\in\Gamma_{\bar t}[O]$ be an optimal trajectory for $u(O,\bar t)$. The Dynamic Programming Principle in Proposition~\ref{prp:prop_u}-(i) and Remark~\ref{rmk:restr_OC} ensure
\begin{equation*}
u(O,\bar t)=u(y(s),s)+\int_{\bar t}^s\left[L(y(\tau),\tau)+\frac{|\alpha(\tau)|^2}{2}\right]\, d\tau,\qquad\forall s\in[\bar t, T],
\end{equation*}
which entails
\begin{equation*}
\varphi(y(s), s)-\varphi(O,\bar t)+\int_{\bar t}^s\left[L(y(\tau),\tau)+\frac{|\alpha(\tau)|^2}{2}\right]\, d\tau\leq 0,\qquad\forall s\in[\bar t, T].
\end{equation*}
With the same arguments as in \cite[Theorem 6.4 (proof)]{IM}, we deduce
\begin{equation}\label{eq:1dic_1}
\int_{\bar t}^s\left[\partial_t \varphi( y(\tau),\tau)+D \varphi(y(\tau),\tau )\cdot\alpha(\tau)+L(y(\tau),\tau)+\frac{|\alpha(\tau)|^2}{2}\right]\, d\tau\leq 0,\quad\forall s\in[\bar t, T],
\end{equation}
setting $D \varphi(\tau, y(\tau))\cdot\alpha(\tau)=0$ for a.a. $\tau\in \Bigl\{\tau\in[\bar t,T]\;:\; y(\tau)=O\Bigr\}=:{\mathcal T}_0$, which makes sense because from Stampacchia theorem, $\alpha(\tau)=0$ for a.a. $\tau\in{\mathcal T}_0$.

From the uniform bound of the optimal control in~$L^2$, see Remark~\ref{rmk:bound_contr}, there holds
\begin{equation*}
d(y(\tau),O)\leq \int_{\bar t}^\tau |\alpha(s)|\, ds\leq C(\tau-\bar t)^{1/2},\qquad \forall \tau\in[\bar t,T].
\end{equation*}
Hence, from the regularity of~$\varphi$, there exists a constant $K$ such that,  for $\psi=\varphi, \partial_t\varphi, D\varphi$,
\begin{equation}\label{eq:1dic_2}
|\psi(y(\tau),\tau)-\psi(O,\bar t)|\leq K (\tau-\bar t)^{1/2},\qquad \forall\tau\in[\bar t,T].
\end{equation}

It is convenient to introduce the following sets of times:
\begin{equation*}
{\mathcal T}^s_0=\{\tau\in(\bar t,s)\;:\;y(\tau)=O\}\quad \hbox{and} \quad
{\mathcal T}^s_i=\{\tau\in(\bar t,s)\;:\;y(\tau)\in J_i\setminus\{O\}\}\; \textrm{for }i=1,\dots,N.
\end{equation*}
Note that ${\mathcal T}^s_O$ is closed while if $i>0$, then ${\mathcal T}^s_i$ is open, and that $(\bar t,s)=\bigcup_{i=0}^N{\mathcal T}^s_i$. Hence \eqref{eq:1dic_1} becomes:
\begin{equation}\label{eq:1dic_3}
\sum_{i=0}^N\int_{{\mathcal T}^s_i}\xi(\tau)\, d\tau\leq 0, \qquad \forall s\in[\bar t,T],
\end{equation}
where
\begin{equation*}
\xi(\tau)=\partial_t \varphi( y(\tau),\tau)+D \varphi(y(\tau),\tau)\cdot\alpha(\tau)+L(y(\tau),\tau)+\frac{|\alpha(\tau)|^2}{2}.
\end{equation*}
In \eqref{eq:1dic_3}, let us  address separately the terms  corresponding to $i=0$ and  $i=1,\dots,N$.

\medskip

Consider $i=0$ first. From Stampacchia theorem,   $\alpha(\tau) =0$ and $L(y(\tau),\tau)=\ell_O(\tau)$  for a.a. $\tau\in {\mathcal T}^s_0$.
Hence, 
\begin{equation*}
\int_{{\mathcal T}^s_0}\xi(\tau)\, d\tau=
\int_{{\mathcal T}^s_0}\left[\partial_t \varphi(O,\tau)+ \ell_O(\tau)\right]\, d\tau\geq \int_{{\mathcal T}^s_0}\left[\partial_t \varphi(O,\bar t)+ \ell_O(\bar t)\right]\, d\tau- (s-\bar t)\o(s-\bar t),
\end{equation*}
where the inequality is due to~\eqref{eq:1dic_2} and to the continuity of~$\ell_O$, and where $\o$ is a modulus of continuity depending  on the constant $K$ in \eqref{eq:1dic_2} and on the modulus of continuity of $\ell_O$.
On the other hand, the definition of $H_O$ guarantees
\[
\ell_O(\bar t)\geq-H_O(\bar t, p)\qquad\forall p\in\R^N.
\]
The latter two observations imply that
\begin{equation}\label{eq:1dic_4}
\int_{{\mathcal T}^s_0}\xi(\tau)\, d\tau\geq |{\mathcal T}^s_0|\left(\partial_t \varphi(O,\bar t)-H_O(\bar t, D\varphi(\bar t,O))\right)- (s-\bar t)\o(s-\bar t).
\end{equation}

\medskip

Consider now $i\in\{1,\dots,N\}$. For a.a. $\tau\in{\mathcal T}^s_i$, the control $\alpha(\tau)$ has the form $\alpha(\tau)=\bar \alpha(\tau)e_i$ with $\bar \alpha(\tau)\in\R$. From \eqref{eq:1dic_2}, Remark~\ref{rmk:bound_contr} and the continuity of~$\ell_i$, there exists a modulus of continuity~$\o$ such that
\begin{eqnarray}\notag
\int_{{\mathcal T}^s_i}\xi(\tau)\, d\tau&=&
\int_{{\mathcal T}^s_i}\left[\partial_t \varphi(y(\tau),\tau)+D \varphi_{\mid J_i}(y(\tau),\tau)\bar \alpha(\tau)+\ell_i(y(\tau),\tau)+\frac{|\bar \alpha(\tau)|^2}{2}\right]\, d\tau\\\label{eq:1dic_5}
&\geq &\int_{{\mathcal T}^s_i}\left[\partial_t \varphi(O,\bar t)+D \varphi_{\mid J_i}(O,\bar t)\bar \alpha(\tau)+\ell_i(O,\bar t)+\frac{|\bar \alpha(\tau)|^2}{2}\right]\, d\tau-(s-\bar t)\o(s-\bar t).
\end{eqnarray}
Thanks to the convexity of the set $FL_i$, the same arguments as those in \cite[eq.(6.22)]{IM} (as a matter of fact, it is enough to use Jensen inequality in the present case), lead to the  existence of  $(\zeta_i,\xi_i)\in FL_i(O,\bar t)$ such that
\begin{equation*}
\begin{array}{rcl}
\ds \int_{{\mathcal T}^s_i}D \varphi_{\mid J_i}(O,\bar t)\bar\alpha(\tau)\, d\tau&=& \ds D \varphi_{\mid J_i}(O,\bar t)\int_{{\mathcal T}^s_i}\bar \alpha(\tau)\, d\tau=|{\mathcal T}^s_i|D \varphi_{\mid J_i}(O,\bar t)\zeta_i, \\
\ds \int_{{\mathcal T}^s_i}|\bar \alpha(\tau)|^2/2\, d\tau&=&\ds |{\mathcal T}^s_i| \xi_i.
\end{array}
\end{equation*}
Note that the path $y(\bar t)=O$ and that during the interval $(\bar t,s)$ may enter and exit several edges. However, if $y(s)\in J_i\setminus\{O\}$ for $s\in(t_1,t_2)$ and $y(t_1)=y(t_2)=O$, then, there holds
\begin{equation*}
\int_{t_1}^{t_2}\bar \alpha(\tau)\, d\tau=0,
\end{equation*}
and consequently
\begin{equation*}
\int_{{\mathcal T}^s_i}\bar \alpha(\tau)\, d\tau=\left\{\begin{array}{ll}
y(s)&\quad\textrm{if }y(s)\in J_i,\\
0&\quad\textrm{otherwise},
\end{array}\right.
\end{equation*}
which implies that $\zeta_i \geq 0$. Therefore,
\begin{eqnarray*}
\int_{{\mathcal T}^s_i}\left[D \varphi_{\mid J_i}(O,\bar t)\bar\alpha(\tau)+\ell_i(O,\bar t)+\frac{|\bar \alpha(\tau)|^2}{2}\right]\, d\tau &=& |{\mathcal T}^s_i|\left[D \varphi_{\mid J_i}(O,\bar t) \zeta_i+\ell_i(O,\bar t)+\xi_i\right]\\
&\geq& -|{\mathcal T}^s_i|H_i^{\downarrow}(O,\bar t, D \varphi_{\mid J_i}(O,\bar t)).
\end{eqnarray*}
The latter inequality and \eqref{eq:1dic_5} yield
\begin{eqnarray*}
\int_{{\mathcal T}^s_i}\xi(\tau)\, d\tau&\geq& |{\mathcal T}^s_i|\left[\partial_t \varphi(O,\bar t)-H_i^{\downarrow}(O,\bar t, D \varphi_{\mid J_i}(O,\bar t))\right]-(s-\bar t)\o(s-\bar t)\\
&\geq& |{\mathcal T}^s_i|\left[\partial_t \varphi(O,\bar t)-H_O(\bar t, D \varphi(O,\bar t))\right]-(s-\bar t)\o(s-\bar t).
\end{eqnarray*}

This, \eqref{eq:1dic_4} and \eqref{eq:1dic_3} then imply that
\begin{eqnarray*}
(N+1)(s-\bar t)\o(s-\bar t)&\geq&\left(\sum_{i=0}^N|{\mathcal T}^s_i|\right) |\left[\partial_t \varphi(O,\bar t)-H_O(\bar t, D \varphi(O,\bar t))\right]\\
&\geq&(s-\bar t) \left[\partial_t \varphi(O,\bar t)-H_O(\bar t, D \varphi(O,\bar t))\right],\end{eqnarray*}
the last line is obtained because $(\bar t,s)=\cup_{i=0}^N{\mathcal T}^s_i$ and  ${\mathcal T}^s_i\cap {\mathcal T}^s_j=\emptyset$ for $i\ne j$. Dividing t by $(s-\bar t)$ and letting $s$ tend to $\bar t^+$ yield
\[
-\partial_t \varphi(O,\bar t)+H_O(\bar t, D \varphi(O,\bar t))\geq 0,
\]
i.e. the  desired inequality.

\texttt{Step 2: $u$ is a subsolution at $O$}.
Let $\varphi \in C^1(\cG\times[0,T])$ be a function touching $u$ from above at $(O,\bar t)$, for some $\bar t\in(0,T)$.  As above, it may be assumed  that $\varphi(x,t)\geq u(x,t)$ $\forall (x,t)\in\cG\times[0,T]$ and $\varphi(O,\bar t)= u(O,\bar t)$. 
The Dynamic Programming Principle in Proposition \ref{prp:prop_vf} ensures that for any $s\in(\bar t,T)$ and any $(y, \alpha)\in \Gamma_{\bar t,s}[O]$:
\begin{equation*}
u(O,\bar t)\leq
u(y(s),s)+\int_{\bar t}^s \left( L(y(\tau),\tau) +\frac{|\alpha(\tau)|^2}{2} \right)d\tau.
\end{equation*}
This implies that, for any $s\in(\bar t,T)$ and any $(y, \alpha)\in \Gamma_{\bar t,s}[O]$, 
\begin{equation}\label{eq:1dic_6}
\varphi(y(s),s)-\varphi(O,\bar t)+\int_{\bar t}^s \left( L(y(\tau),\tau) +\frac{|\alpha(\tau)|^2}{2} \right)\, d\tau\geq 0.
\end{equation}
Note that \eqref{eq:1dic_6} can be written 
\begin{equation}\label{eq:1dic_7}
\varphi(y(s),s)-\varphi(O,\bar t)+\sum_{i=0}^N\int_{{\mathcal T}_i^s} \left(L(y(\tau),\tau) +\frac{|\alpha(\tau)|^2}{2} \right)d\tau\geq 0,
\end{equation}
where the sets ${\mathcal T}_i^s$ are defined as in Step 1 and depend upon the trajectory $(y,\alpha)$.
The arguments below will differ whether $(y, \alpha)\in \Gamma_{\bar t,s}[O]$ remains at $O$ or enters in a given edge $J_i$.

{\it Case $(a)$: the trajectory remains at $O$}. For any 
$s\in(\bar t,T]$, consider the trajectory $(y, \alpha)\in \Gamma_{\bar t,s}[O]$ with $\alpha(\cdot)=0$. Clearly, $y(\cdot)=O$ and $(\bar t,s)={\mathcal T}_0^s$. Then \eqref{eq:1dic_7} becomes
\begin{equation*}
\varphi(O,s)-\varphi(O,\bar t)+\int_{\bar t}^s \ell_O(\tau)d\tau\geq 0.
\end{equation*}
From the continuity of $\ell_O$ with respect to $t$,
\begin{equation*}
\varphi(O,s)-\varphi(O,\bar t)+(s-\bar t) \ell_O(\bar t)\geq -(s-\bar t)\o(s-\bar t),
\end{equation*}
for some modulus of continuity $\o$.
Dividing by $(s-\bar t)$,letting $s\to \bar t^+$  taking into account the regularity of~$\varphi$ yield
\begin{equation}\label{eq:1dic_a}
\partial_t\varphi(O,\bar t)+\ell_O(\bar t)\geq 0.
\end{equation}
{\it Case $(b)$: the trajectory enters in a given edge}. Fix $i\in\{1,\dots,N\}$. For any $n\in\N\setminus\{0\}$, fix $\bar a\in(0,n)$. For any $s\in(\bar t,T]$, consider the trajectory $(y, \alpha)\in \Gamma_{\bar t,s}[O]$ with $\alpha(\tau)=\bar a e_i$ for $\tau\in(\bar t,s)$. Clearly, $\alpha(\tau)\in A_i$ and $y(\tau)\in J_i\setminus\{O\}$ for $\tau\in(\bar t,s)$. Thus $(\bar t,s)={\mathcal T}_i^s$. Note that here the unboudedness of~$J_i$ is not essential. Indeed, if $J_i$ had a finite length $l_i$, then it would be  enough to choose $s\leq \bar t+l_i/\bar a$.
By the same arguments as in Step~$1$ (see \eqref{eq:1dic_1}), inequality~\eqref{eq:1dic_7} can be written 
\begin{equation*}
\int_{\bar t}^s\left[\partial_t \varphi( y(\tau),\tau)+D \varphi_{\mid J_i}(y(\tau),\tau )\bar a+\ell_i(y(\tau),\tau)+\frac{\bar a^2}{2}\right]\, d\tau\geq 0, \quad \quad\forall s\in[\bar t, T].
\end{equation*}
As in Step~$1$, taking into account Remark~\ref{rmk:bound_contr}, estimate~\eqref{eq:1dic_2} and the uniform continuity of~$\ell_i$ in any neighbourhood of~$O$, we get
\begin{equation*}
\int_{\bar t}^s\left[\partial_t \varphi(O,\bar t)+D \varphi_{\mid J_i}(O,\bar t)\bar a+\ell_i(O,\bar t)+\frac{\bar a^2}{2}\right]\, d\tau\geq -(s-\bar t)\o(s-\bar t),\quad \quad\forall s\in[\bar t, T]
\end{equation*}
for a suitable modulus of continuity~$\o$. Dividing the previous inequality by $(s-\bar t)$ and letting~$s\to\bar t^+$ yield
\begin{equation*}
\partial_t \varphi(O,\bar t)+D \varphi_{\mid J_i}(O,\bar t)\bar a+\ell_i(O,\bar t)+\frac{\bar a^2}{2}\geq 0.
\end{equation*}
Since $\bar a\in(0,n)$ is arbitrary,
\begin{equation*}
\partial_t \varphi(O,\bar t)-\sup_{\bar a\in(0,n)}\left\{-D \varphi_{\mid J_i}(O,\bar t)\bar a-\ell_i(O,\bar t)-\frac{\bar a^2}{2}\right\}\geq 0,
\end{equation*}
and, since  $n$ is arbitrary,
\begin{equation*}
\partial_t \varphi(O,\bar t)-\sup_{\bar a\geq 0}\left\{-D \varphi_{\mid J_i}(O,\bar t)\bar a-\ell_i(O,\bar t)-\frac{\bar a^2}{2}\right\}\geq 0.
\end{equation*}
Then \eqref{eq:1dic_easy} yields
\begin{equation*}
\partial_t \varphi(O,\bar t)-H_i^{\downarrow}(O,\bar t,D \varphi_{\mid J_i}(O,\bar t))\geq 0.
\end{equation*}
Since  $i$ is arbitrary,  and from inequality~\eqref{eq:1dic_a}, we deduce 
\begin{equation*}
\partial_t \varphi(O,\bar t)-\max\left\{\max\left\{-\ell_*(\bar t),\max_{i=1,\dots,N}\left\{-\ell_i(O,\bar t)\right\}\right\},\max_{i=1,\dots,N}\left\{H_i^{\downarrow}(x,t,D \varphi_{\mid J_i}(O,\bar t))\right\}\right\}
\geq 0,
\end{equation*}
i.e. the desired inequality.
\end{proof}
\begin{remark}
If  $g\in C(\cG)$, then there is a unique viscosity solution $u$ of  \eqref{HJ}
such that $u\in C_b(\cG\times [0,T])$ and $u(\cdot,T)=g$, see e.g. \cite{IM}.
We have not found any relevant literature on uniqueness when  $g$ is not  continuous and plan to address this topic in a future work.
\end{remark}

\section{Relaxed Mean Field Games equilibria}\label{sect:MFGequil}
%
%
We are now ready to tackle Mean Field Games. Relying on the results contained in Section~\ref{sec:OC}, we prove that there exists a relaxed MFG equilibrium and study the related mild solutions.

\subsection{Setting and notations}\label{subsec:setting}

\paragraph{Probability sets and evaluation map.} Let $\cP(\cG)$ denote the set of Borel probability measures on $\cG$ endowed with the narrow topology. Similarly, $\cP(\Gamma)$ stands for the set of Borel probability measures on $\Gamma$.
For $t\in [0,T]$, the evaluation map $e_t:\Gamma\to \cG$ is defined by $e_t(y_x, \alpha)=y_x(t)$.
For any $\mu\in \cP(\Gamma)$ and  $t\in [0,T]$, the Borel probability measure $m^\mu(t)$ on $ \cG$ is defined by $m^\mu(t)=e_t\sharp \mu$.

\paragraph{Costs.} The running cost and the terminal cost depend on the distribution of the population.
We consider the costs $L_i\in C (\cP(\cG);C_b(\cG\times [0,T]))$, for $i=1,\dots,N$, and $L_*\in C(\cP(\cG);C([0,T])$.
Similarly, let $G_i:\cP(\cG)\to C_b(\cG)$, $i=1,\dots,N$, and $G_*:\cP(\cG)\to \R$ be continuous functions.
The images of $m\in \cP(\cG)$ by $L_i$, respectively by $G_i$ are denoted by $L_i[m](\cdot,\cdot)$, respectively $G_i[m](\cdot)$, and we introduce similar notations for $L_*$ and $G_*$.

Let the real number  $K$ be defined as follows:
\begin{multline}\label{eq:37}\tag{$H^{\textrm{MFG}}_1$}
K=\max\left(\sup_{m\in \cP(\cG)} \|L_*[m]\|_{L^\infty} ,\max_{i=1,\dots,N}\sup_{m\in \cP(\cG)} \|L_i[m]\|_{L^\infty} , \sup_{m\in \cP(\cG)} \|G_*[m]\|_{L^\infty} ,\right.\\ \left. \max_{i=1,\dots,N}\sup_{m\in \cP(\cG)} \|G_i[m]\|_{L^\infty}\right)\in\R^+.  
\end{multline}
For brevity,  we  write
\begin{equation}
\label{eq:37b}
\begin{array}[c]{rcl}
    L[m](x,t)&=&\ds \sum_{i=1}^N L_i[m](x,t)\car_{x\in J_i\setminus\{O\}} + L_O[m](t)\car_{x=O},\\  
     G[m](x)&=&\ds \sum_{i=1}^N G_i[m](x)\car_{x\in J_i\setminus\{O\}} +\min\left\{G_*[m],\min_{i=1,\dots,N}G_i[m](O)\right\}\car_{x=O},
\end{array}
\end{equation}
for $x\in \cG$ and $t\in[0,T]$,
where
\begin{equation*}
L_O[m](\tau)=\min\left\{L_*[m](\tau),\min_{i=1,\dots,N}L_i[m](O,\tau)\right\}.
\end{equation*}
\paragraph{Admissible paths.}
Let us introduce the sets of admissible {\it paths}
\begin{equation}\label{eq:gamma_tilde_x}
\tilde \Gamma_C[x]=\left\{y\in Y_{x,0}\;:\; d(y(s),O)\leq C,\  \forall s\in[0,T],\  \|\dot y\|_2\leq C\right\},\quad
\tilde \Gamma_C=\bigcup_{x\in\cG}\tilde \Gamma_C[x],
\end{equation}
and endow $\tilde \Gamma_C$  with the topology of uniform convergence. Note that a {\it path} is the sole $y\in Y_{y(0),0}$ while a {\it trajectory} is formed by the couple $(y,\alpha)\in\Gamma$.
\begin{lemma}\label{lemma:Gcomp}
For every positive constant~$C$, the set~$\tilde \Gamma_C$ is compact.
\end{lemma}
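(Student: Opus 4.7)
The plan is to combine Arzelà--Ascoli with weak $L^2$ compactness of the derivatives, and then recover admissibility of the limit curve by reproducing the argument used to establish \eqref{claim1} in the proof of Proposition~\ref{prp:ex_OT}.

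First I would take any sequence $\{y_n\}\subset\tilde\Gamma_C$ with $y_n\in\tilde\Gamma_C[x_n]$, and observe that the starting points $x_n=y_n(0)$ all lie in the closed ball $\{x\in\cG:\, d(x,O)\leq C\}$, which is compact since $\cG$ consists of finitely many half-lines and this ball is a finite union of closed intervals. Passing to a subsequence, $x_n\to x$ for some $x\in\cG$. Next, I would verify Arzelà--Ascoli for $\{y_n\}$ viewed as a sequence in $C([0,T];\cG)$: uniform boundedness comes directly from $d(y_n(s),O)\leq C$; for equicontinuity, the fact that the geodesic distance is bounded by the length of the connecting piece of curve together with Hölder's inequality gives, for all $0\leq s<t\leq T$,
\begin{equation*}
d(y_n(s),y_n(t))\leq \int_s^t |\dot y_n(\tau)|\,d\tau\leq \|\dot y_n\|_2\sqrt{t-s}\leq C\sqrt{t-s}.
\end{equation*}
Extracting a further subsequence, $y_n\to y$ uniformly on $[0,T]$, and the pointwise bound $d(y(s),O)\leq C$ is preserved.

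Second I would use weak compactness in $L^2$: since $\|\dot y_n\|_{L^2([0,T];\R^d)}\leq C$, up to a subsequence there exists $\alpha\in L^2([0,T];\R^d)$ with $\dot y_n\rightharpoonup \alpha$ weakly in $L^2$. Passing to the limit in $y_n(s)=x_n+\int_0^s\dot y_n(\tau)\,d\tau$ gives $y(s)=x+\int_0^s\alpha(\tau)\,d\tau$; hence $y\in W^{1,2}([0,T];\R^d)$ with $\dot y=\alpha$, and the lower semicontinuity of the norm under weak convergence yields $\|\dot y\|_2\leq C$.

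Third I would check that $\dot y(s)\in\tilde F(y(s))$ for a.e. $s\in(0,T)$, which is the only missing ingredient for $y\in\tilde\Gamma_C[x]$. This is exactly the argument carried out to prove \eqref{claim1} in Proposition~\ref{prp:ex_OT}: at a point $s$ with $y(s)\in J_i\setminus\{O\}$, uniform convergence combined with the uniform $1/2$-Hölder bound obtained above forces $y_n(\tau)\in J_i\setminus\{O\}$ for all $\tau$ in a neighborhood of $s$ and all $n$ large, so $\dot y_n$ is parallel to $e_i$ on that neighborhood, and the weak limit $\dot y$ inherits the same property; at a point $s$ with $y(s)=O$, Stampacchia's theorem applied to $y\in W^{1,2}$ forces $\dot y(s)=0$ a.e. on $\{y=O\}$, which belongs to $F_0(O)\subset\tilde F(O)$. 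This proves $y\in\tilde\Gamma_C[x]\subset\tilde\Gamma_C$, establishing compactness.

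The main obstacle is the third step, i.e.\ confirming that the weak $L^2$ limit of the velocities still takes admissible values with respect to the state-dependent set $\tilde F(y(s))$; everything else is a standard Arzelà--Ascoli argument on a metric graph. Fortunately the obstacle is only apparent, since the reasoning already performed in the proof of Proposition~\ref{prp:ex_OT} transfers verbatim.
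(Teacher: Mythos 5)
Your proof is correct and follows essentially the same route as the paper's: extract a weakly $L^2$-convergent subsequence of the derivatives and a uniformly convergent subsequence of the curves, pass to the limit in the integral representation, and recover admissibility of the limit by the same argument used for \eqref{claim1} in Proposition~\ref{prp:ex_OT}. The paper's version is just terser (it does not spell out the convergence of the starting points or the Arzel\`a--Ascoli step), so no further comment is needed.
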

\begin{proof}
Fix $C>0$ and consider a sequence $\{y_n\}_{n\in\N}$, with $y_n\in \tilde \Gamma_C$. Possibly for a subsequence (still denoted by $y_n$), 
$\{\dot y_n\}_n$ converges in the weak topology of~$L^2([0,T],\R^d)$ to some~$\alpha\in L^2([0,T],\R^d)$, with $\|\alpha\|_2\leq C$. Then, $\{y_n\}_n$ converges  uniformly to some  $y\in C([0,T],\cG)$. Clearly, $\alpha=\dot y$. The same arguments as in the proof of Proposition~\ref{prp:ex_OT} yield that the path~$y$ is admissible, i.e. $y\in Y_{y(0),0}$, and consequently that $y$ belongs to~$\tilde \Gamma_C$.
\end{proof}
%
%
\paragraph{Lipschitz admissible paths.}
Given two positive constants $V$ and $C$,
let us  introduce the sets of Lipschitz admissible {\it paths} 
\begin{equation}\label{gamma_lip}
\Gamma^{\textrm{Lip}}_{C,V}[x]=\left\{y\in \tilde \Gamma_C[x]\;:\; \|y'\|_\infty\leq V\right\},\quad
\Gamma^{\textrm{Lip}}_{C,V}=\bigcup_{x\in\cG}\Gamma^{\textrm{Lip}}_{C,V}[x],
\end{equation}
and endow 
$\Gamma^{\textrm{Lip}}_{C,V}$ with the topology of uniform convergence. The same arguments as in Lemma~\ref{lemma:Gcomp} yield that $\Gamma^{\textrm{Lip}}_{C,V}$ is compact.

\paragraph{The set $\cP(\tilde \Gamma_C)$ and the associated costs.} Let $\cP(\tilde \Gamma_C)$ denote the set of probability measures on~$\tilde \Gamma_C$ endowed with the narrow topology. 
For $t\in [0,T]$, the evaluation map $e_t:\tilde \Gamma_C\to \cG$ is defined by $e_t(y)=y(t)$.
For any $\mu\in \cP(\tilde \Gamma_C)$ and  $t\in [0,T]$, define the Borel probability measure $m^\mu(t)$ on $ \cG$ by $m^\mu(t)=e_t\sharp \mu$. Clearly, $\textrm{supp}(m^\mu(t))\subset\{x\in\cG\;:\;d(x,O)\leq C\}$.
It is possible to prove that, if $\mu\in \cP(\tilde\Gamma_C)$, then the map $t\mapsto m^\mu(t)$ belongs to $C^{1/2}([0,T],\cP(\cG))$, see Lemma~\ref{lemma:3.1bis} below. Hence, for all $(y,\alpha)\in \Gamma$, the functions~$t\mapsto F_i[m^\mu(t)](y(t))$ are continuous and bounded by the constant $K$ introduced in (\ref{eq:37}).
\\
With $\mu\in \cP(\tilde\Gamma_C)$ and $(y,\alpha)\in\Gamma[x]$, we associate the cost
\begin{equation}\label{costMFG}
J^\mu(x;(y,\alpha))
=\int_0^T \left(L[m^\mu(\tau)] (y(\tau),\tau)+\frac{|\alpha(\tau)|^2}{2} \right)d\tau+  G[ m^\mu(T)](y(T)).
\end{equation}
\begin{remark}\label{rmk:alpha_y}
We recall that for each $y\in\tilde\Gamma_C[x]$ there exists $\alpha\in L^2([0,T],\R^d)$ such that $(y,\alpha)\in\Gamma[x]$, from Theorem~\ref{sec:optim-contr-probl} and Remark~\ref{rmk:2.2} . Such a control~$\alpha$ is unique for a.e. $t\in\{t\in[0,T]:y(t)\ne O\}$, which is not the case in $\{t\in[0,T]:y(t)= O\}$. However, the associated cost is independent of the choice of this control, namely: for any $y\in\tilde\Gamma_C[x]$, there holds
\begin{equation*}
J^\mu(x;(y,\alpha_1))=J^\mu(x;(y,\alpha_2))\qquad \forall (y,\alpha_1),(y,\alpha_2)\in \Gamma[x]. 
\end{equation*}
\end{remark}
For every  $y\in\tilde\Gamma_C[x]$, we define $\alpha_y$ the control such that $(y,\alpha_y)\in \Gamma[x]$ and $\alpha_y(t)=0\in A_0$ for a.e. $t\in\{t\in[0,T]:y(t)= O\}$. Note that this control is uniquely defined up to a set of null measure.
\paragraph{Optimal trajectories.} Fix $\mu\in \cP(\tilde\Gamma_C)$; for any $x\in \cG$, let us set
\begin{equation}\label{eq:42}
\Gamma^{\mu,{\rm opt}}[x]=\left\{(y,\alpha)\in \Gamma[x]\;:\; J^\mu(x;(y,\alpha))=\min_{( \widetilde y,\widetilde \alpha)\in \Gamma[x]}  J^\mu(x; (\widetilde y,\widetilde \alpha)) \right\}
\end{equation}
where $J^\mu $ is defined in (\ref{costMFG}).
Proposition~\ref{prp:ex_OT} entails that for each $\mu\in \cP(\tilde\Gamma_C)$ and $x\in \cG$, the set $\Gamma^{\mu,{\rm opt}}[x]$ of optimal trajectories starting from~$x$ is not empty.\\
We set $\Gamma^{\mu,{\rm opt}}=\cup_{x\in\cG} \Gamma^{\mu,{\rm opt}}[x]$.
\begin{remark}\label{rmk:OT_unif}
From assumption~\eqref{eq:37}, there exists a positive constant~$\tilde C$ such that, for every~$\mu\in \cP(\tilde\Gamma_C)$, $x\in\cG$ and $(y,\alpha)\in \Gamma^{\mu,{\rm opt}}[x]$, there holds $\|\alpha\|_2\leq \tilde C$.
In particular, if $m_0\in\cP(\cG)$ has compact support,  then for every~$\mu\in \cP(\tilde\Gamma_C)$, $x\in \textrm{supp}(m_0)$ and $(y,\alpha)\in \Gamma^{\mu,{\rm opt}}[x]$, there holds $y\in \tilde\Gamma_{\tilde C}[x]$ (possibly after taking a larger value of the constant~$\tilde C$).
\end{remark}

\paragraph{The set $\cP_{m_0}(\tilde \Gamma_C)$.}
We assume
\begin{equation}\label{eq:371}\tag{$H^{\textrm{MFG}}_2$}
m_0\in \cP(\cG)\qquad \textrm{has compact support.}
\end{equation}
Let $\cP_{m_0}(\tilde \Gamma_C)$ denote the set of measures $\mu\in\cP(\tilde \Gamma_C)$ such that $e_0\sharp \mu=m_0$.
In general, $\cP_{m_0}(\tilde \Gamma_C)$ may be empty.
However, in the present framework, this is not the case:
\begin{lemma}\label{lemma:Pm0_notempty}
Under assumptions~\eqref{eq:37} and~\eqref{eq:371}, for $C$ sufficiently large, $\cP_{m_0}(\tilde \Gamma_C)$ is not empty.
\end{lemma}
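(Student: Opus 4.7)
The plan is to construct an explicit measure $\mu \in \cP_{m_0}(\tilde \Gamma_C)$ by pushing $m_0$ forward along the map that sends each starting point $x$ to the constant curve staying at $x$. Since $m_0$ has compact support by~\eqref{eq:371}, choosing $C$ sufficiently large is harmless, and the construction avoids any delicate measurable-selection argument.

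More precisely, I would proceed as follows. First, since $\textrm{supp}(m_0)$ is compact in $\cG$, there exists $C_0 > 0$ such that $d(x,O) \leq C_0$ for every $x \in \textrm{supp}(m_0)$; I take $C \geq C_0$ (and one may of course take $C \geq \tilde C$ from Remark~\ref{rmk:OT_unif} for later convenience). For each $x \in \cG$ the constant curve $y_x : [0,T] \to \cG$, $y_x(t) \equiv x$, satisfies $\dot y_x(t) = 0 \in \tilde F(x)$ for a.e. $t$, hence $y_x \in Y_{x,0}$; moreover $d(y_x(s),O) = d(x,O)$ and $\|\dot y_x\|_2 = 0$, so $y_x \in \tilde \Gamma_C[x]$ whenever $x \in \textrm{supp}(m_0)$.

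Next, define the map
\[
\Phi : \textrm{supp}(m_0) \longrightarrow \tilde \Gamma_C, \qquad \Phi(x) = y_x.
\]
This map is continuous: if $x_n \to x$ in $\cG$, then $\sup_{t \in [0,T]} d(y_{x_n}(t), y_x(t)) = d(x_n, x) \to 0$, which is exactly convergence in the topology of uniform convergence that equips $\tilde \Gamma_C$. In particular $\Phi$ is Borel measurable. I then set
\[
\mu := \Phi \sharp m_0 \in \cP(\tilde \Gamma_C).
\]

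Finally, one checks that $e_0 \sharp \mu = m_0$. Indeed, for every Borel set $A \subseteq \cG$,
\[
(e_0 \sharp \mu)(A) = \mu\bigl(e_0^{-1}(A)\bigr) = m_0\bigl(\Phi^{-1}(e_0^{-1}(A))\bigr) = m_0\bigl(\{x : y_x(0) \in A\}\bigr) = m_0(A),
\]
since $y_x(0) = x$. Hence $\mu \in \cP_{m_0}(\tilde \Gamma_C)$, which is therefore nonempty. I do not foresee a genuine obstacle in this argument; the only point that could look delicate is the measurability of $\Phi$, but this is handled by the straightforward continuity check above, which in turn relies on the chosen topology on $\tilde \Gamma_C$ given by Lemma~\ref{lemma:Gcomp}.
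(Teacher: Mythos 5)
Your proposal is correct and is essentially identical to the paper's own argument, which also pushes $m_0$ forward along the map sending each $x\in\textrm{supp}(m_0)$ to the constant curve $t\mapsto x$ and verifies $e_0\sharp\mu=m_0$. The extra details you supply (the bound $C\geq C_0$ from compactness of the support and the continuity of the map $x\mapsto y_x$) are accurate and harmless.
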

\begin{proof}
The proof consists of adapting some arguments in \cite[Remark 3.2]{CC}. For $C\geq \tilde C$ (where $\tilde C$ is the constant introduced in Remark~\ref{rmk:OT_unif}),  consider the map: $j:\textrm{supp}(m_0)\to \tilde \Gamma_C$, $j(x)(t)=x$ for any $t\in[0,T]$. Set $\tilde m_0=m_{0\mid \textrm{supp}(m_0)}$, the restriction of $m_0$ to its support. Observe that $e_0\#(j\#\tilde m_0)= m_0$, hence $(j\#\tilde m_0)\in \cP_{m_0}(\tilde \Gamma_C)$.
\end{proof}
\paragraph{The set $\cPmzeroGammaLipCV$.}
We assume \eqref{eq:371}.
Let $\cPmzeroGammaLipCV$ denote the set of measures $\mu\in\cP( \Gamma^{\mathrm{Lip}}_{C,V})$ such that $e_0\sharp \mu=m_0$.
Adapting the arguments in the proof of Lemma~\ref{lemma:Pm0_notempty}, we obtain that, for $C$ and $V$ sufficiently large, $\cPmzeroGammaLipCV$ is not empty.

Let us give an example, particularly simple because the agents do not interact, in which the distribution of states may develop a singularity.

\begin{example}\label{exa:dirac}
In a junction with two edges, 
consider the costs: $L_1[m]\equiv -1$, $L_2[m]\equiv 1$, $L_*[m]=-1$ and $G_i[m]\equiv 0$ ($i=*,1$) and $G_2[m]= m(J_2)$ for every $m\in\cP(\cG)$. Assume that the initial distribution of states is uniform on $[0,1/2]e_1\cup [0,1/2]e_2$. 
Fix $\bar x\in(0,1/2]$. Let $(y,\alpha)$ be an optimal trajectory starting at~$\bar x e_2$ at time $t=0$.\\
We claim that, for $T$ sufficiently large, $(y,\alpha)$ reaches~$O$ at time $t_{\bar x}=\bar x/2$ and stops there.
Indeed, either $y(\cdot)=\bar x e_2$ in $[0,T]$ (and the corresponding cost is equal to $T$) or there exists $s_1\in(0,T]$ such that $s_1=\min\{s\in[0,T]:y(s)=O\}$ because the other possibilities are less convenient. In the latter case, $y(\cdot)=O$ in $[s_1,T]$  is the optimal choice among all the trajectories $(\tilde y, \tilde \alpha )$ such that $\tilde \alpha(s)=\alpha(s)$ if $s\in[0,s_1]$.

Then, from the Euler-Lagrange condition in Lemma~\ref{lemma:EL}, there holds $\alpha(\cdot)=-\bar \alpha e_2$ in $(0,s_1)$ for a constant $\bar \alpha>0$. Hence, $s_1=\bar x/\bar\alpha$. The resulting cost is 
\[
\frac{\bar x\bar\alpha}{2}+\frac{2\bar x}{\bar\alpha}-T,
\]    
whose minimum w.r.t. $\bar\alpha\in(0,+\infty)$ is attained when $\bar\alpha=2$. With this choice of $\bar \alpha$, the cost is $2\bar x-T$ which is the minimal one, provided that $T$ is sufficiently large. Our claim is completely proved.

Therefore, the distribution of agents develops a singularity at  the vertex $O$ immediately after time $s=0$: for $s\in (0,T]$, the  singularity is $c(s)\delta_O$ (here, $\delta_O$ is the Dirac delta at $O$) with $c(s)=2s$ for $s\in(0,1/4]$ and $c(s)=1/2$ for $s\in[1/4,T]$.

Analogously, for $L_1=L_2=1$ and $L_*=-1$, a Dirac delta immediately appears at $O$ and after the time $1/4$, the whole population is concentrated at $O$.
\end{example}
In the next example, again without interactions, the distribution  of states develops a singularity that, after a while, starts travelling inside the edges.

\begin{example}\label{exa:moving_Dirac}
Consider a network with two vertices $V_1$ and~$V_2$ and three edges $J_1$, $J_2$ and $J_3$ such that $J_1\cap J_2=V_1$, $J_2\cap J_3=V_2$, $J_1\cap J_3=\emptyset$. For simplicity, assume that $V_1$ coincides with the origin $O$. The edges $J_1$ and $J_3$ are unbounded while the edge $J_2$ has length equal to $1$, say $J_2=[0,1]e_2$ for some unit vector $e_2$ ( i.e. $V_1=0e_2$ and $V_2=e_2$). The running cost $L$ and the terminal cost $G$ are defined on the three edges as follows: for any measure $m$ on the network
\begin{equation*}
L[m](x,t)=\left\{\begin{array}{ll}
0,&\qquad\textrm{if }x\in J_1\cup\{V_1\},\\
k_L,&\qquad\textrm{if }x\in (J_2\cup J_3\cup\{V_2\})\setminus\{V_1\},
\end{array}\right.
\end{equation*}
\begin{equation*}
G[m](x,t)=\left\{\begin{array}{ll}
0,&\qquad\textrm{if }x\in (J_1\cup J_2\cup\{V_1\})\setminus\{V_2\},\\
-k_G,&\qquad\textrm{if }x\in J_3\cup\{V_2\},\end{array}\right.
\end{equation*}
for some positive constants $k_L$ and $k_G$ which will be chosen later on. Note that these costs fulfill the assumptions~\eqref{eq:460} and \eqref{eq:461}. The time horizon~$T$ will be chosen suitably large later on.

Assume for a moment that, for $T>(2k_L)^{-1/2}$ and $k_G> 2\sqrt{2k_L}$  
, for any $t\in[0,T-(2k_L)^{-1/2}]$, any $(y,\alpha)\in \Gamma^{{\rm opt}}_t[V_1]$ is such that 
\begin{equation}\label{eq:26feb_1}
y(s)=\left\{\begin{array}{ll}
V_1,&\qquad \textrm{for }s\in [t,T-(2k_L)^{-1/2}],\\
(2k_L)^{1/2}\left(-T+(2k_L)^{-1/2}+s\right)e_2,&\qquad \textrm{for }s\in (T-(2k_L)^{-1/2},T],
\end{array}\right.
\end{equation}
i.e. the  trajectory remains at $V_1$ up to time $T-(2k_L)^{-1/2}$ and enters afterwards  in $J_2$ with constant velocity, so to reach $V_2$ at time $T$.

Under the latter assumption, let us prove that,  for $T$ sufficiently large, if $(y,\alpha)\in \Gamma^{{\rm opt}}_0[\bar x e_2]$, with $\bar x\in[0,1]$, then 
\begin{equation}\label{eq:26feb_2}
y(s)=\left\{\begin{array}{ll}
\left(-(2k_L)^{1/2} s+\bar x\right)e_2,&\qquad \textrm{for }s\in [0,\bar x(2k_L)^{-1/2}),\\
V_1,&\qquad \textrm{for }s\in [\bar x(2k_L)^{-1/2},T-(2k_L)^{-1/2}],\\
(2k_L)^{1/2}\left(-T+(2k_L)^{-1/2}+s\right)e_2,&\qquad \textrm{for }s\in (T-(2k_L)^{-1/2},T],
\end{array}\right.
\end{equation}
i.e., the trajectory moves towards $V_1$ with velocity $(2k_L)^{1/2}$, reaches $V_1$ at time $(2k_L)^{-1/2}\bar x $ and remains there until time $T-(2k_L)^{-1/2}$, then moves towards $V_2$ with velocity $(2k_L)^{1/2}$ and reaches $V_2$ at the horizon $T$. Clearly, if $m_0$ is supported in $J_2$, then \eqref{eq:26feb_2} entails that all the agents first reach $V_1$, (so a singularity appears in the distribution), then all together start to move toward $V_2$  at time $T-(2k_L)^{-1/2}$.

Let us prove \eqref{eq:26feb_2}.
Since $\bar x e_2\in J_2\setminus\{V_1,V_2\}$,  Euler-Lagrange condition in Lemma~\ref{lemma:EL} implies that  the control $\alpha$ is constant on an interval $[0,\tau)$, for some $\tau\in(0,T]$. 

Let us list all the possible strategies and  compare the corresponding costs.
\\
{\it Strategy $A$:} $y(s)=\bar x e_2$ and $\alpha(s)=0$ for all $s\in [0,T]$. The cost is $J_A=k_L T$.\\
{\it Strategy $B$:} $\alpha$ is constant on $[0,\tau)$, where $\tau=\min\{T, \min\{s>0:\, y(s)\in \{V_1,V_2\}\}\}$.
Note that, if $y$ remains in $J_2\setminus\{V_1,V_2\}$ in the whole interval $[0,T]$, then the cost $J_A$ is not larger.
Thus  we may assume $\tau=\min\{s>0:\, y(s)\in \{V_1,V_2\}\}< T$. We distinguish two subcases whether $y(\tau)=V_1$ or $y(\tau)=V_2$.\\
{\it Strategy $B1$:} $y(\tau)=V_2$. Euler-Lagrange conditions yields $\alpha(s)=(1-\bar x)/\tau \; e_2$ and $y(s)= 
\tau^{-1} [\bar x \tau +(1-\bar x)s]e_2$ on $[0,\tau)$. It is then clear that $y(s)=V_2$ for $s\in [\tau,T]$, because the other possibilities lead to higher costs. The corresponding cost is $
(1-\bar x)^2/(2\tau) \, +k_LT-k_G$.  Since the latter quantity is strictly decreasing w.r.t. $\tau$, its minimum in $\tau$ is 
achieved by $\tau=T$.  Hence the optimal cost with Strategies of type   $B1$ is $J_{B1}=(1-\bar x)^2/(2T) \, +k_LT-k_G$.\\
{\it Strategy $B2$:} $y(\tau)=V_1$.  Euler-Lagrange condition yields  $\alpha(s)=-(\bar x/\tau) e_2$ and $y(s)=\bar x(1-s/\tau)e_2$ on $[0,\tau)$. Then \eqref{eq:26feb_1} implies that 
\begin{equation*}
y(s)=\left\{\begin{array}{ll}
V_1&\qquad \textrm{for }s\in [\tau,T-(2k_L)^{-1/2}],\\
(2k_L)^{1/2}\left(-T+(2k_L)^{-1/2}+s\right)e_2&\qquad \textrm{for }s\in (T-(2k_L)^{-1/2},T].
\end{array}\right.
\end{equation*}
The cost corresponding to this trajectory is  
\[
\frac12\frac{\bar x^2}{\tau}+k_L\tau+\frac 12\sqrt{2k_L}+k_LT-k_L\left(T-\frac{1}{\sqrt{2k_L}}\right)-k_G,
\]
and its minimum w.r.t. $\tau\in[0,T]$ is achieved by $\tau=\bar x / \sqrt{2k_L}$. Hence the optimal cost in Strategy  $B2$  is $J_{B2}=(1+\bar x)   \sqrt{2k_L}-k_G$.\\
{\it Conclusion.} Comparing the costs $J_A$, $J_{B1}$ and $J_{B2}$, we obtain that $J_{B1}>J_{B2}$ and $J_A>J_{B2}$ for $k_L>1$, $k_G$ satisfying the assumptions before \eqref{eq:26feb_1} and $T$ sufficiently large. Hence, the optimal trajectory is that of Strategy $B2$.

There remains to prove \eqref{eq:26feb_1}. To this end, let us distinguish several possible strategies.\\
{\it Strategy $\tilde A$:} $y(s)=V_1$ in $[t,T]$. The cost is $J_{\tilde A}=0$.\\
{\it Strategy $\tilde B$:} Immediately or after a while, the trajectory $y$ enters in $J_1\setminus \{V_1\}$ and remains in $J_1$. Since the cost associated to the kinetic energy is higher than with Strategy $\tilde A$, Strategy $\tilde B$ is strictly suboptimal.
\\
{\it Strategy $\tilde C$:} Immediately or after a while, the trajectory $y$ enters in $J_2\setminus \{V_1\}$ and is such that  $y(T)\in (J_1\cup J_2)\setminus J_3$, in particular, $y(T)\ne V_2$. Since the cost associated to the kinetic energy is higher than with Strategy $\tilde A$, Strategy $\tilde C$ is strictly suboptimal.
\\
{\it Strategy $\tilde D$:} Immediately or after a while, the trajectory $y$ enters in $J_2$  and is such that $y(T)= V_2$. Hence,
\begin{itemize}
\item $y(s)=V_1$ on $[t,s_1]$ for some $s_1\in[t,T]$
\item for $s_2=\min\{s\in(s_1,T]:\, y(s)=V_2\}$, there holds: $y(s_2)=V_2$ and $y(s)\in J_2\setminus\{V_1\}$ for $s\in(s_1,s_2)$. Then, from Euler-Lagrange condition, $\alpha(s)=(s_2-s_1)^{-1}e_2$ for $s\in(s_1,s_2)$
\item $y(s)=V_2$ for $s\in[s_2,T]$ because all the other possibilities result in a higher cost.
\end{itemize}
The resulting cost is
\[
\frac12\frac{1}{s_2-s_1}+k_L T-k_Ls_1-k_G.
\]
Let us minimize the latter cost w.r.t. $s_1\in[t,T)$ and $s_2\in(s_1,T]$. Since it is strictly decreasing w.r.t. $s_2$, 
let us choose $s_2=T$ so there remains to minimize $\frac12\frac{1}{T-s_1}+k_L T-k_Ls_1-k_G$ with respect to $s_1\in[t,T)$.
The minimum is reached at $s_1=T-(2k_L)^{-1/2}$, and 
takes the value $J_{\tilde D}=\sqrt{2k_L}-k_G$ which is less than $J_{\tilde A}$ from the assumption on $k_G$.\\
{\it Strategy $\tilde E$:} Immediately or after a while, the trajectory $y$ enters  in $J_2$ and is such that $y(T)\in J_3\setminus \{V_2\}$.
Comparing the resulting cost with that of Strategy $\tilde D$, one can check that  Strategy $\tilde E$ is strictly suboptimal.
\end{example}
\subsection{Relaxed MFG equilibrium}
Fix $\mu\in \cP_{m_0}(\tilde\Gamma_C)$; for any $x\in \cG$, let us set
\begin{equation}\label{eq:42new}
\GammamuoptC[x]=\left\{y\in \tilde\Gamma_C[x]\;:\; J^\mu(x;(y,\alpha_y))=\min_{( \widetilde y,\widetilde \alpha)\in \Gamma[x]}  J^\mu(x; (\widetilde y,\widetilde \alpha)) \right\}
\end{equation}
where $J^\mu$ is defined in (\ref{costMFG}) and $\alpha_y$ is a control such that $(y,\alpha_y)\in\Gamma[x]$ (see Remark~\ref{rmk:alpha_y}).
\begin{definition}
  \label{sec:setting-notation-3}
The complete probability measure $\mu \in \cP_{m_0}(\tilde\Gamma_C)$ is a relaxed mean field game equilibrium associated with the initial distribution $m_0$ if 
\begin{equation}\label{eq:43}
  {\rm{supp}}(\mu)\subset \mathop \bigcup_{
x\in {\rm{supp}}(m_0)}\GammamuoptC[x].
\end{equation}
\end{definition}
The following two theorems address the existence of MFG equilibria under different hypothesis.
\begin{theorem}\label{sec:setting-notation-6}
Assume~$(H_0)$,~\eqref{eq:37} and in~\eqref{eq:371}; consider $C\geq \tilde C$ (where $\tilde C$ is the constant introduced in Remark~\ref{rmk:OT_unif}). Then, there exists a relaxed mean field equilibrium $\mu\in \cP_{m_0}(\tilde\Gamma_C)$. 
\end{theorem}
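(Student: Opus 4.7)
The strategy is to apply Kakutani's fixed point theorem to a suitable set-valued map on the convex compact set $\cP_{m_0}(\tilde\Gamma_C)$. Since $\tilde\Gamma_C$ is compact (Lemma~\ref{lemma:Gcomp}), the space $\cP(\tilde\Gamma_C)$ endowed with the narrow topology is compact and metrizable; the marginal constraint $e_0\sharp\mu=m_0$ is preserved under narrow convergence (because $e_0$ is continuous), so $\cP_{m_0}(\tilde\Gamma_C)$ is a nonempty (Lemma~\ref{lemma:Pm0_notempty}), convex, and compact subset. I would define
\[
\Psi:\cP_{m_0}(\tilde\Gamma_C)\to 2^{\cP_{m_0}(\tilde\Gamma_C)},\qquad
\Psi(\mu)=\Bigl\{\nu\in\cP_{m_0}(\tilde\Gamma_C)\,:\,{\rm supp}(\nu)\subset\bigcup_{x\in{\rm supp}(m_0)}\tilde\Gamma_C^{\mu,{\rm opt}}[x]\Bigr\},
\]
so that a fixed point of $\Psi$ is exactly a MFG equilibrium in the sense of Definition~\ref{sec:setting-notation-3}.

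To apply Kakutani's theorem I must check that, for each $\mu$, the value $\Psi(\mu)$ is nonempty and convex, and that $\Psi$ has closed graph. For nonemptiness, I would observe that, thanks to (a $\mu$-dependent version of) Proposition~\ref{prp:prop1}, the set-valued map $x\mapsto\tilde\Gamma_C^{\mu,{\rm opt}}[x]$ has closed graph in $\cG\times\tilde\Gamma_C$, so it is a measurable multifunction with nonempty closed values. A Kuratowski--Ryll-Nardzewski selection then produces a Borel map $\sigma:{\rm supp}(m_0)\to\tilde\Gamma_C$ with $\sigma(x)\in\tilde\Gamma_C^{\mu,{\rm opt}}[x]$ for every $x\in{\rm supp}(m_0)$, and $\nu:=\sigma\sharp m_0$ lies in $\Psi(\mu)$. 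Convexity of $\Psi(\mu)$ is immediate since both the marginal constraint and the support inclusion are preserved under convex combinations of probability measures.

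The main obstacle is the closed graph property, which requires the following generalization of Proposition~\ref{prp:prop1}: if $\mu_n\to\mu$ narrowly in $\cP(\tilde\Gamma_C)$, $x_n\to x$ in $\cG$, and $y_n\in\tilde\Gamma_C^{\mu_n,{\rm opt}}[x_n]$ converges uniformly to a curve $y$, then $y\in\tilde\Gamma_C^{\mu,{\rm opt}}[x]$. To prove it, I would first note that narrow convergence $\mu_n\to\mu$, combined with the equicontinuity of $t\mapsto m^{\mu_n}(t)$ provided by Lemma~\ref{lemma:3.1bis}, forces $m^{\mu_n}(t)\to m^{\mu}(t)$ narrowly, uniformly in $t\in[0,T]$. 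Using the continuity of $L_i,L_*,G_i,G_*$ in $m$ and the uniform bound~\eqref{eq:37}, this yields that for any competitor $(\hat y,\hat\alpha)\in\Gamma[x]$ and the approximating sequence $(\hat y_n,\hat\alpha_n)\in\Gamma[x_n]$ furnished by Lemma~\ref{lemma1}, $J^{\mu_n}(x_n;(\hat y_n,\hat\alpha_n))\to J^{\mu}(x;(\hat y,\hat\alpha))$. Inserting this into the optimality inequality $J^{\mu_n}(x_n;(y_n,\alpha_n))\leq J^{\mu_n}(x_n;(\hat y_n,\hat\alpha_n))$ and passing to the $\liminf$ via the same weak $L^2$ lower semicontinuity arguments used in the proof of Proposition~\ref{prp:ex_OT}, we obtain $J^{\mu}(x;(y,\alpha_y))\leq J^{\mu}(x;(\hat y,\hat\alpha))$ for every admissible competitor, which is the desired optimality of $y$.

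With this enhanced closed graph property in hand, the closed graph of $\Psi$ follows by a standard disintegration argument: if $\mu_n\to\mu$, $\nu_n\in\Psi(\mu_n)$ and $\nu_n\to\nu$ narrowly, then for every $y\in{\rm supp}(\nu)$ one extracts, via compactness of $\tilde\Gamma_C$, curves $y_n\in{\rm supp}(\nu_n)\subset\tilde\Gamma_C^{\mu_n,{\rm opt}}[y_n(0)]$ converging uniformly to $y$; the enhanced closed graph property then gives $y\in\tilde\Gamma_C^{\mu,{\rm opt}}[y(0)]$, so $\nu\in\Psi(\mu)$. Kakutani's theorem applied to $\Psi$ therefore produces a fixed point $\mu^{*}\in\Psi(\mu^{*})$, which is the sought constrained MFG equilibrium.
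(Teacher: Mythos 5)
Your proposal is correct and follows essentially the same route as the paper: Kakutani's theorem applied to a set-valued map on the compact convex set $\cP_{m_0}(\tilde\Gamma_C)$, with nonemptiness via a Borel measurable selection of $x\mapsto\tilde\Gamma_C^{\mu,{\rm opt}}[x]$, convexity by linearity, and the closed graph obtained from the joint $(x,\mu)$ closed-graph property of optimal trajectories (the paper's Proposition~\ref{prop2}) together with the Kuratowski approximation of points in the support. The only (harmless) cosmetic difference is that you define the map through the global support condition of Definition~\ref{sec:setting-notation-3}, whereas the paper phrases it through the disintegration $\hat\mu_x$ of $\hat\mu$ with respect to $m_0$.
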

The proof of Theorem \ref{sec:setting-notation-6} is postponed to subsection~\ref{subsect:proofmain}.
\begin{theorem}\label{thm:MFGlip}
Keeping the assumptions of  Theorem~\ref{sec:setting-notation-6}, we also assume that, for some positive constant~$K$, 
\begin{equation}\label{eq:431}\tag{$H^{\textrm{MFG}}_3$}
\left\{\begin{array}{c}
G_i[m]\in C^2(J_i)\qquad \textrm{and}\qquad L_i[m](\cdot,t)\in C^2(J_i)\qquad \forall m\in\cP(\cG),\, t\in[0,T]\\
\sup_{m\in\cP(\cG)}\max_{i=1,\dots,N}\|\partial G_i[m]\|_\infty\leq K\\ 
\sup_{m\in\cP(\cG)}\max_{i=1,\dots,N}\sup_{t\in[0,T]}\|\partial L_i[m](\cdot,t)\|_\infty\leq K,
\end{array}\right.
\end{equation}
Then, there exists a  relaxed MFG equilibrium $\mu\in \cPmzeroGammaLipCV$,
 where the constants  $V$ and $C$  appear respectively in Theorem~\ref{optcur_lip} with $t=0$ and $x\in \textrm{supp}(m_0)$ and in Theorem~\ref{sec:setting-notation-6}. 
\end{theorem}
The proof of Theorem \ref{thm:MFGlip} is postponed to subsection~\ref{subsect:proofmain2}.
%
%
\subsection{Preliminary results}

\begin{lemma}\label{sec:setting-notation-4}
Let a sequence of probability measures $\{\mu_n\}_{n\in \N}$,  $\mu_n\in\cP(\tilde\Gamma_C)$, be narrowly convergent to $\mu\in \cP(\tilde\Gamma_C)$ as $n\to\infty$. For all $t\in [0,T]$, the sequence $\{m^{\mu_n}(t)\}_{n\in \N}$ is narrowly convergent to $m^{\mu}(t)$.
\end{lemma}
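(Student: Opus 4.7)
The plan is to use the standard continuity of the pushforward under continuous maps. Specifically, I will show that the evaluation map $e_t:\tilde\Gamma_C\to\cG$ is continuous, and then deduce narrow convergence of the pushforwards from the narrow convergence of $\mu_n$ to $\mu$ by testing against bounded continuous functions.

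First, I would observe that since $\tilde\Gamma_C$ is equipped with the uniform convergence topology (as fixed right after \eqref{eq:gamma_tilde_x}), the evaluation map $e_t(y)=y(t)$ is continuous: if $y_n\to y$ uniformly on $[0,T]$, then in particular $y_n(t)\to y(t)$ in $\cG$. Therefore, for any bounded continuous test function $\varphi\in C_b(\cG)$, the composition $\varphi\circ e_t:\tilde\Gamma_C\to\R$ is bounded and continuous.

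Next, invoking the narrow convergence $\mu_n\to\mu$ in $\cP(\tilde\Gamma_C)$ applied to the test function $\varphi\circ e_t$, I get
\begin{equation*}
\int_{\tilde\Gamma_C} \varphi(e_t(y))\, d\mu_n(y) \;\longrightarrow\; \int_{\tilde\Gamma_C} \varphi(e_t(y))\, d\mu(y) \qquad\text{as }n\to\infty.
\end{equation*}
By the definition of pushforward measure, this identity rewrites as
\begin{equation*}
\int_{\cG} \varphi(x)\, dm^{\mu_n}(t)(x) \;\longrightarrow\; \int_{\cG} \varphi(x)\, dm^{\mu}(t)(x).
\end{equation*}
Since $\varphi\in C_b(\cG)$ was arbitrary, this is exactly the narrow convergence of $m^{\mu_n}(t)$ to $m^\mu(t)$ in $\cP(\cG)$, which concludes the proof.

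There is no substantial obstacle here: the statement is essentially a general fact that pushforward by a continuous map is continuous for the narrow topology, and everything reduces to the (trivial) continuity of $e_t$ coming from the choice of uniform topology on $\tilde\Gamma_C$. The only point worth noting is that the argument is pointwise in $t$ and does not yield any regularity of $t\mapsto m^\mu(t)$; that time regularity is the subject of the separate Lemma~\ref{lemma:3.1bis} mentioned earlier in the text.
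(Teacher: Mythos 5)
Your proof is correct and is essentially the paper's own argument: the paper likewise tests against $f\in C^0_b(\cG;\R)$, rewrites $\int_\cG f\,dm^{\mu_n}(t)$ as $\int_{\tilde\Gamma_C} f(y(t))\,d\mu_n(y)$ via the pushforward definition, and passes to the limit using the continuity of $y\mapsto f(y(t))$ for the uniform topology on $\tilde\Gamma_C$. No difference worth noting.
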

\begin{proof}
Adapting the arguments of \cite[Lemma 3.1]{AMMT2} leads to
\begin{equation*}
\ds \int_{\cG} f(x) dm^{\mu_n}(t)(x)= \ds \int_{\tilde\Gamma_C} f(y(t)) d\mu_n(y)
\to\ds  \int_{\tilde\Gamma_C} f(y(t)) d\mu(y)= \ds\int_{\cG} f(x) dm^{\mu}(t)(x),
\end{equation*}
for all $f\in C^0_b(\cG;\R)$.
\end{proof}
\begin{lemma}\label{lemma:3.1bis}
Assume \eqref{eq:371}. There holds
\[
\sup\limits_{\mu\in \cP_{m_0}(\tilde\Gamma_C)}\textrm{Wass}_1(m^\mu(t),m^\mu(s))\leq C|t-s|^\half \qquad\forall t,s\in[0,T].
\]
Similarly,
\[
\sup\limits_{\mu\in  \cPmzeroGammaLipCV}\textrm{Wass}_1(m^\mu(t),m^\mu(s))\leq V|t-s| \qquad\forall t,s\in[0,T].
\]
\end{lemma}
\begin{proof}
Consider any $\mu\in \cP_{m_0}(\tilde\Gamma_C)$. For any $t,s\in[0,T]$, there holds
\begin{multline*}
\sup\limits_\phi\int_\cG\phi(x)[dm^\mu(t)-dm^\mu(s)](x)=
\sup\limits_\phi\int_{\tilde\Gamma_C}\left[\phi(y(t))-\phi(y(s))\right]d\mu(y)\\ 
\leq \int_{\tilde\Gamma_C}|y(t)-y(s)|d\mu(y)\leq |t-s|^\half\|\alpha\|_2
\end{multline*}
where the supremum is performed over all the continuous $1$-Lipschitz function. Owing to the definition of~$\tilde\Gamma_C$ in~\eqref{eq:gamma_tilde_x} and to the arbitrariness of $\mu\in \cP_{m_0}(\tilde\Gamma_C)$, the latter relation entails the first statement.
The second statement is obtained in a singular way.
\end{proof}

It is useful to recall the disintegration theorem:
\begin{theorem}\label{sec:setting-notation-1}
Let $X$ and $Y$ be Radon metric spaces, $\pi: X\to Y$ be a Borel map,  $\mu$ be a probability measure on $X$. Set $\nu=\pi\sharp\mu$. There exists a $\nu$-almost everywhere uniquely defined Borel measurable family of probability measures $(\mu_y)_{y\in Y}$ on $X$ such that
\begin{equation*}
  \label{eq:38}
\mu_y(X\setminus \pi^{-1}(y))=0,\quad \hbox{ for } \nu\hbox{-almost all } y\in Y,
\end{equation*}
and for every Borel function $f: X\to [0,+\infty]$,
\begin{equation*}
  \label{eq:39}
\int_X f(x) d\mu(x)=\int_Y  \left( \int_{X}f(x) d\mu_y(x) \right) d\nu(y)=\int_Y  \left( \int_{\pi^{-1}(y)}f(x) d\mu_y(x) \right) d\nu(y).
\end{equation*}
Recall that $(\mu_y)_{y\in Y}$ is a Borel family  of probability measures if for any Borel subset $B$ of $X$, $Y\ni y\mapsto \mu_y(B)$ is a Borel 
function from $Y$ to $[0,1]$.
\end{theorem}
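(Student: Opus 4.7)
The plan is to build the family $(\mu_y)_{y\in Y}$ as a Borel measurable selection of conditional Riesz representations. The essential input is that, for each bounded Borel $f:X\to\R$, the set function $B\mapsto \int_{\pi^{-1}(B)} f\,d\mu$ defines a finite signed Borel measure on $Y$, dominated in total variation by $\|f\|_\infty\,\nu$; hence it is absolutely continuous with respect to $\nu$, and the Radon--Nikodym theorem produces a $\nu$-measurable density $T_f:Y\to\R$ with $\int_B T_f\,d\nu=\int_{\pi^{-1}(B)} f\,d\mu$ for every Borel $B\subset Y$. Intuitively $T_f(y)=\int_X f\,d\mu_y$, and the task is to upgrade this pointwise linear functional on test functions to an honest probability measure $\mu_y$ for $\nu$-a.e.\ $y$.

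First I would fix a countable $\Q$-linear subspace $\cD\subset C_b(X)$ which is dense in $C_0(X)$ (using that $X$ is a Radon metric space, hence strongly Lindel\"of, so such a $\cD$ exists), and contains $1$. Each $f\in\cD$ gives a Radon--Nikodym derivative $T_f$, defined $\nu$-a.e. On a $\nu$-full measure set $Y_0\subset Y$ one simultaneously has, for all $f,g\in\cD$ and $q,r\in\Q$: $T_{qf+rg}(y)=qT_f(y)+rT_g(y)$, $|T_f(y)|\le\|f\|_\infty$, $T_f(y)\ge 0$ if $f\ge 0$, and $T_1(y)=1$ — each of these identities holds $\nu$-a.e.\ by uniqueness of the density and the linearity/positivity of the integral, and only countably many of them must hold. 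For $y\in Y_0$, the map $f\mapsto T_f(y)$ extends by continuity to a positive linear functional of norm $1$ on $C_0(X)$, so by the Riesz--Markov theorem (using that $X$ is Radon) it is represented by a Radon probability measure $\mu_y$ with $\int_X f\,d\mu_y=T_f(y)$ for $f\in\cD$. For $y\notin Y_0$ set $\mu_y$ arbitrarily, e.g.\ a fixed probability on $\pi^{-1}(y)$ if non-empty.

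Next I would verify the two claimed properties. Concentration: taking $f=\mathbf{1}_{\pi^{-1}(A)}$ for Borel $A\subset Y$, the defining identity gives $\int_B \mu_y(\pi^{-1}(A))\,d\nu(y)=\mu(\pi^{-1}(A\cap B))=\nu(A\cap B)=\int_B \mathbf{1}_A(y)\,d\nu(y)$ for every Borel $B$, so $\mu_y(\pi^{-1}(A))=\mathbf{1}_A(y)$ for $\nu$-a.e.\ $y$; applied to a countable generating algebra of Borel sets of $Y$ (available because $Y$ is a separable metric space) and to $A=\{y\}^c$ handled via a monotone class argument, this yields $\mu_y(X\setminus\pi^{-1}(y))=0$ for $\nu$-a.e.\ $y$. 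The integration formula $\int_X f\,d\mu=\int_Y\int_X f\,d\mu_y\,d\nu(y)$ holds by construction for $f\in\cD$; one extends it to indicators of open sets by lower semicontinuous approximation using inner regularity of the Radon measures involved, then to Borel indicators by a $\pi$--$\lambda$ argument, then to arbitrary Borel $f:X\to[0,+\infty]$ by monotone convergence. Borel measurability of $y\mapsto\mu_y(B)$ then propagates from the countable generating family to all Borel $B$ via the monotone class theorem, giving the Borel family property.

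For uniqueness, any other Borel family $(\mu'_y)_{y\in Y}$ satisfying the disintegration identity must give the same integrals against each $f\in\cD$ for $\nu$-a.e.\ $y$, hence (by countability of $\cD$ and density in $C_0(X)$) produce the same Riesz functional, so $\mu'_y=\mu_y$ $\nu$-a.e. The main technical obstacle is the simultaneous selection in the second paragraph: one must perform countably many Radon--Nikodym identifications on a common $\nu$-full set $Y_0$ \emph{before} invoking Riesz representation, rather than after — doing it the other way round would require measurable selection on an uncountable family. Separability of a dense test subspace $\cD$ and the Radon hypothesis on $X$ (needed both for Riesz and for the regularity arguments that extend the formula from continuous to Borel functions) are the exact properties that make this selection go through.
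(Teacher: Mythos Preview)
The paper does not prove this statement: it is merely recalled as the classical disintegration theorem (the authors write ``It is useful to recall the disintegration theorem'' and then use it as a tool in the proof of Theorem~\ref{sec:setting-notation-6}); the implicit reference is \cite[Theorem~5.3.1]{AGS}. So there is no proof in the paper to compare your proposal against.

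Your outline follows the standard Radon--Nikodym/Riesz route and is essentially the textbook argument. One technical point deserves care: when $X$ is non-compact, a positive linear functional of norm~$1$ on $C_0(X)$ only yields a Radon \emph{sub}-probability measure via Riesz--Markov, and the condition $T_1(y)=1$ does not automatically transfer because $1\notin C_0(X)$. You need an explicit tightness argument here---this is exactly where the Radon hypothesis on $X$ enters, ensuring that $\mu$ (hence each $\mu_y$) puts full mass on a $\sigma$-compact set, so no mass escapes. The usual fix is either to work in a metrizable compactification of $X$ and then show the disintegrands concentrate on $X$, or to exhaust $X$ by compacta $K_n$ with $\mu(X\setminus K_n)\to 0$ and verify $\mu_y(X\setminus K_n)\to 0$ for $\nu$-a.e.\ $y$. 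Apart from this, the selection-on-a-countable-dense-family strategy and the monotone-class extension are correct and standard.
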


%
%
\subsection{A closed graph property}

 Choosing $C\geq \tilde C$, where $\tilde C$ is the constant introduced in Remark~\ref{rmk:OT_unif}, we  first establish a closed graph property for the map~$\GammamuoptC[x]$.
\begin{proposition}\label{prop2}
Consider $\mu\in\cP_{m_0}(\tilde\Gamma_C)$ and $x\in\textrm{supp}(m_0)$. Consider also a sequence of probability measures $\{\mu_n\}_{n\in\N}$, with $\mu_n\in\cP_{m_0}(\tilde\Gamma_C)$, narrowly convergent to~$\mu$ as $n\to\infty$ and a sequence of points $\{x_n\}_{n\in\N}$, with $x_n\in\cG$ and $x_n\to x$ as $n\to\infty$. 
Let~$\{y_n\}_{n\in\N}$ be a sequence of paths such that $y_n\in \tilde\Gamma_C^{\mu_n,{\rm opt}}[x_n]$ and $y_n$ uniformly converge to some path~$y$ as $n\to\infty$. 
Then, $y$ belongs to $\GammamuoptC[x]$, namely any trajectory~$(y,\alpha_y)$ is an optimal trajectory for~$J^\mu$. In other words, the multivalued map $(x,\mu)\rightrightarrows \GammamuoptC[x]$ enjoys the closed graph property.
\end{proposition}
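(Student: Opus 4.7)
\textbf{Proof plan for Proposition~\ref{prop2}.} The strategy is to merge the argument used for Proposition~\ref{prp:prop1} (closed graph without measure dependence) with the narrow continuity of the map $t\mapsto m^{\mu_n}(t)$ provided by Lemma~\ref{sec:setting-notation-4}. First I would observe that $y\in\tilde\Gamma_C[x]$: by Lemma~\ref{lemma:Gcomp}, $\tilde\Gamma_C$ is compact for the uniform topology, so the uniform limit $y$ of the $y_n\in\tilde\Gamma_C$ lies in $\tilde\Gamma_C$; moreover $y(0)=\lim_n y_n(0)=\lim_n x_n=x$. Fixing a representative control, I would associate with $y$ the control $\alpha_y$ (unique up to a null set, see Remark~\ref{rmk:alpha_y}) and note that the controls $\alpha_{y_n}$ are uniformly bounded in $L^2$ by Remark~\ref{rmk:OT_unif}, so some subsequence converges weakly in $L^2([0,T];\R^d)$ to an admissible control which yields the same cost as $\alpha_y$.

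Next, I would fix an arbitrary competitor $(\hat y,\hat\alpha)\in\Gamma[x]$ and apply Lemma~\ref{lemma1} to obtain $(\hat y_n,\hat\alpha_n)\in\Gamma[x_n]$ with $\hat y_n\to \hat y$ uniformly, $\hat y_n(T)=\hat y(T)$, $\|\hat\alpha_n\|_2^2\leq \|\hat\alpha\|_2^2+o_n(1)$ and an explicit time-rescaling representation of $\hat\alpha_n$. The optimality of $y_n$ against $\mu_n$ then reads
\begin{equation*}
J^{\mu_n}(x_n;(y_n,\alpha_{y_n}))\;\leq\; J^{\mu_n}(x_n;(\hat y_n,\hat\alpha_n)).
\end{equation*}
The whole proof reduces to showing
\begin{equation*}
\liminf_{n\to\infty} J^{\mu_n}(x_n;(y_n,\alpha_{y_n}))\;\geq\; J^{\mu}(x;(y,\alpha_y))
\quad\textrm{and}\quad
\limsup_{n\to\infty} J^{\mu_n}(x_n;(\hat y_n,\hat\alpha_n))\;\leq\; J^{\mu}(x;(\hat y,\hat\alpha)),
\end{equation*}
which by the arbitrariness of $(\hat y,\hat\alpha)$ gives $y\in \tilde\Gamma_C^{\mu,\rm{opt}}[x]$.

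For both inequalities the new ingredient with respect to Propositions~\ref{prp:ex_OT} and~\ref{prp:prop1} is that the running and terminal costs depend on the time-varying measures $m^{\mu_n}(\tau)$. By Lemma~\ref{sec:setting-notation-4}, $m^{\mu_n}(\tau)$ converges narrowly to $m^{\mu}(\tau)$ for each $\tau\in[0,T]$; the continuity assumptions $L_i\in C(\cP(\cG);C(\cG\times[0,T]))$, $L_*\in C(\cP(\cG);C([0,T]))$, $G_i\in C(\cP(\cG);C(\cG))$ and the uniform bound~\eqref{eq:37} then yield, for each $\tau$ fixed, the uniform convergences $L_i[m^{\mu_n}(\tau)]\to L_i[m^{\mu}(\tau)]$ on $\cG\times[0,T]$, $L_*[m^{\mu_n}(\tau)]\to L_*[m^{\mu}(\tau)]$ on $[0,T]$, together with $G_i[m^{\mu_n}]\to G_i[m^\mu]$ uniformly on $\cG$ and $G_*[m^{\mu_n}]\to G_*[m^\mu]$. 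Combining these convergences with the pointwise convergence $y_n(\tau)\to y(\tau)$ and dominated convergence (based on the uniform bound $K$), I can repeat verbatim the integrand-by-integrand analysis carried out in the proof of Proposition~\ref{prp:ex_OT}: the quadratic control term is handled by lower semicontinuity under weak $L^2$-convergence, the running cost is split into the four contributions corresponding to $y_n(\tau)$ and $y(\tau)$ lying in $J_i\setminus\{O\}$ or at $O$, and the terminal cost exploits the $\min$-structure of $G[\,\cdot\,]$ at $O$ exactly as for~\eqref{part5}.

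The limsup bound for $(\hat y_n,\hat\alpha_n)$ is obtained by re-running the computation of Lemma~\ref{lemma1}-$(iii)$ with the measure-dependent cost $L[m^{\mu_n}(\cdot)]$: the time rescaling $\tau\mapsto (\tau-\delta_n)T/(T-\delta_n)$ produces a modulus of continuity that vanishes as $n\to\infty$, and the additional error coming from replacing $L[m^{\mu_n}(\tau)]$ by $L[m^{\mu}(\tau)]$ is controlled uniformly thanks to the continuity of the $L_i$, together with the pointwise narrow convergence $m^{\mu_n}(\tau)\to m^{\mu}(\tau)$. The main obstacle, I expect, is precisely this coupling between (i) the weak convergence of controls and uniform convergence of curves, (ii) the pointwise-in-$\tau$ narrow convergence of measures, and (iii) the lack of continuity of $L[m](\cdot,t)$ at the vertex; all three must be reconciled edge by edge and on the set $\{y(\tau)=O\}$ using the same argument as for inequality~\eqref{claim_fin}.
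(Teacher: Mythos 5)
Your proposal follows essentially the same route as the paper's proof: reduce to the two inequalities $\liminf_n J^{\mu_n}(x_n;(y_n,\alpha_{y_n}))\geq J^{\mu}(x;(y,\alpha_y))$ and $\limsup_n J^{\mu_n}(x_n;(\hat y_n,\hat\alpha_n))\leq J^{\mu}(x;(\hat y,\hat\alpha))$ via Lemma~\ref{lemma1} and the optimality of $y_n$, then handle the measure dependence with Lemma~\ref{sec:setting-notation-4}, the continuity assumptions on $L_i,G_i$, weak lower semicontinuity of the $L^2$ norm, and the Fatou-type argument at the vertex as in Proposition~\ref{prp:ex_OT}. The only detail worth making explicit is that the time-rescaled measure argument $m^{\mu}\bigl(\tfrac{T-\delta_n}{T}\xi+\delta_n\bigr)$ versus $m^{\mu}(\xi)$ requires the H\"older continuity of $t\mapsto m^{\mu}(t)$ from Lemma~\ref{lemma:3.1bis}, which your plan invokes only implicitly.
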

\begin{proof}[Proof of Proposition~\ref{prop2}]
There are similar arguments as in the proof of Proposition~\ref{prp:prop1}, so we will mailnly focus on the new aspects. We wish to prove that
\begin{equation*}
(i)\quad y\in \tilde\Gamma_C[x],\qquad (ii)\quad (y,\alpha_y)\textrm{ is optimal for }J^\mu.
\end{equation*}
From the definition of~$\tilde\Gamma_C$, the controls $\alpha_{y_n}$ are uniformly bounded in~$L^2$.
The same arguments as in the proof of Proposition~\ref{prp:prop1} show that, possibly up to a subsequence (still denoted by $\alpha_{y_n}$),  $\{\alpha_{y_n}\}_n$ converges in the weak topology of~$L^2((0,T),\R^d)$ to some control~$\alpha_y$, with $\|\alpha_y\|_2\leq C$, that $(y,\alpha_y)\in \Gamma[x]$ and $y\in\tilde\Gamma_C[x]$. The proof of point~$(i)$ is done.\\
Concerning ~$(ii)$, it suffices to prove that
\begin{equation*}
J^\mu(x;(y,\alpha_y))\leq J^\mu(x;(\hat y,\hat \alpha))\qquad \forall (\hat y,\hat \alpha)\in\Gamma[x].
\end{equation*}
Fix any $(\hat y,\hat \alpha)\in\Gamma[x]$. Lemma~\ref{lemma1} ensures that there exists a sequence $\{\hat y_n,\hat \alpha_n)\}_{n\in\N}$ such that $(\hat y_n,\hat \alpha_n)\in\Gamma[x_n]$, $\hat y_n(T)=\hat y(T)$ and
\begin{equation*}
\hat y_n \to \hat y \textrm{ uniformly in $[0,T]$ as $n\to\infty$},\qquad\qquad \|\hat \alpha_n\|_{2}\leq \|\hat \alpha\|_{2} +o_n(1),
\end{equation*}
where $o_n(1)$ is a sequence such that $\lim_n o_n(1)=0$.
Since $y_n\in \tilde\Gamma_C^{\mu_n,{\rm opt}}[x_n]$, 
\begin{equation}\label{eq:34}
J^{\mu_n}(x_n;(y_n,\alpha_{y_n}))\leq J^{\mu_n}(x_n;(\hat y_n,\hat \alpha_n)).
\end{equation}
Let us now study separately the two sides of \eqref{eq:34}. For the right hand side, the construction and the properties of~$(\hat y_n,\hat \alpha_n)$ entail
\begin{eqnarray*}
J^{\mu_n}(x_n;(\hat y_n,\hat \alpha_n))
&\leq& J^{\mu}(x;(\hat y,\hat \alpha))+\sum_{i=1}^4\bar I_i
\end{eqnarray*}
where, for $\d_n=d(x,x_n)$,
\begin{equation*}
\begin{array}{ll}
\ds \bar I_1=\int_0^{\d_n}L[m^{\mu_n}(\tau)] (\hat y_n(\tau),\tau)d\tau, \quad  &\ds \bar I_2= \frac{\|\alpha_n\|_2^2-\|\alpha\|_2^2}{2}\leq  o_n(1),\\
\ds \bar I_3=\int_{\d_n}^T L[m^{\mu_n}(\tau)] (\hat y_n(\tau),\tau)d\tau,&
\ds \bar I_4=-\int_0^{T} L[m^{\mu}(\tau)](\hat y(\tau),\tau) d\tau.
\end{array}
\end{equation*}
The boundedness of $L$ implies that $\lim_{n\to\infty}\bar I_1=0$.
Then, arguing as in the proof of Lemma~\ref{lemma1}, 
\begin{eqnarray*}
\bar I_3&=& \int_{0}^T \left[\sum_{i=1}^N L_i\left[m^{\mu_n}\left(\frac{T-\d_n}{T}\xi+\d_n\right)\right]\left(\hat y(\xi),\frac{T-\d_n}{T}\xi+\d_n\right)\car_{\hat y(\xi)\in J_i\setminus\{O\}}\right.\\&& \left.+L_O\left[m^{\mu_n}\left(\frac{T-\d_n}{T}\xi+\d_n\right)\right]\left(\frac{T-\d_n}{T}\xi+\d_n\right)\car_{\hat y(\xi)=O}\right]\left(1-\frac{\d_n}{T}\right)\, d\xi\\
&=&\int_{0}^T L\left[m^{\mu_n}\left(\frac{T-\d_n}{T}\xi+\d_n\right)\right]\left(\hat y(\xi),\frac{T-\d_n}{T}\xi+\d_n\right)\left(1-\frac{\d_n}{T}\right)\, d\xi,
\end{eqnarray*}
and consequently,
\begin{equation*}
\bar I_3+\bar I_4=\bar I_5+\bar I_6+\bar I_7+\bar I_8
\end{equation*}
where
\begin{equation*}
\begin{array}{rcl}
\bar I_5&=&\ds-\frac{\d_n}{T} \int_{0}^T L\left[m^{\mu_n}\left(\frac{T-\d_n}{T}\xi+\d_n\right)\right]\left(\hat y(\xi),\frac{T-\d_n}{T}\xi+\d_n\right)\, d\xi,\\
\bar I_6&=&\ds\int_{0}^T\left( L\left[m^{\mu_n}\left(\frac{T-\d_n}{T}\xi+\d_n\right)\right]\left(\hat y(\xi),\frac{T-\d_n}{T}\xi+\d_n\right)-  L\left[m^{\mu_n}\left(\xi\right)\right]\left(\hat y(\xi),\frac{T-\d_n}{T}\xi+\d_n\right) \right) \, d\xi,\\
\bar I_7&=&\ds\int_{0}^T\left( L\left[m^{\mu_n}\left(\xi\right)\right]\left(\hat y(\xi),\frac{T-\d_n}{T}\xi+\d_n\right)- L\left[m^{\mu}(\xi)\right]\left(\hat y(\xi),\frac{T-\d_n}{T}\xi+\d_n\right)\right)\, d\xi,\\
\bar I_8&=&\ds \int_{0}^T\left( L\left[m^{\mu}(\xi)\right]\left(\hat y(\xi),\frac{T-\d_n}{T}\xi+\d_n\right)
- L\left[m^{\mu}(\xi)\right]\left(\hat y(\xi),\hat \alpha(\xi),\xi\right)\right)\, d\xi.
\end{array}
\end{equation*}
The boundedness of $L$ entails: $|\bar I_5|=o_n(1)$.
From Lemma~\ref{lemma:3.1bis}, the assumptions on the costs $L_i$ and Lebesgue dominated convergence theorem, $|\bar I_6|=o_n(1)$. From Lemma~\ref{sec:setting-notation-4}, again the assumptions on the costs $L_i$ and Lebesgue dominated convergence theorem,  $|\bar I_7|=o_n(1)$. Finally, since $\hat y$ is  bounded  and $L_i[m]$ are continuous,  $|\bar I_8|=o_n(1)$.

To summarize, there holds
\begin{equation}\label{eq:sum1}
\limsup_{n}J^{\mu_n}(x_n;(\hat y_n,\hat \alpha_n))\leq J^{\mu}(x;(\hat y,\hat \alpha)).
\end{equation}
The left hand side of~\eqref{eq:34} is addressed with arguments from the proof of Proposition~\ref{prp:ex_OT}. By definition of cost~\eqref{costMFG}, 
\begin{eqnarray}\label{eq:34bis}
J^{\mu_n}(x_n;(y_n,\alpha_{y_n}))
&=& \int_0^T \frac{|\alpha_{y_n}(\tau)|^2}{2}d\tau+\sum_{i=1}^5 \hat I_i,
\end{eqnarray}
where
\begin{eqnarray*}
\hat I_1&=& \int_0^T\sum_{i=1}^N L_i[m^{\mu_n}(\tau)](y_n(\tau),\tau)\car_{y_n(\tau)\in J_i\setminus\{O\}} \car_{y(\tau)\in J_i\setminus\{O\}} d\tau,\\
\hat I_2&=& \int_0^T\sum_{i=1}^N L_i[m^{\mu_n}(\tau)](y_n(\tau),\tau)\car_{y_n(\tau)\in J_i\setminus\{O\}} \car_{y(\tau)\in \cG\setminus J_i} d\tau,\\
\hat I_3&=& \int_0^T\sum_{i=1}^N L_i[m^{\mu_n}(\tau)](y_n(\tau),\tau)\car_{y_n(\tau)\in J_i\setminus\{O\}} \car_{y(\tau)=O} d\tau,\\
\hat I_4&=&\int_0^T L_O[m^{\mu_n}(\tau)](\tau)\car_{y_n(\tau)=O} d\tau,\\
\hat I_5&=&G[m^{\mu_n}(\tau)](y_n(T)).
\end{eqnarray*}
The convergence in the weak topology of $L^2([t,T];\R^d)$ entails
\begin{equation*}
\int_0^T\frac{|\alpha(\tau)|^2}{2}d\tau\leq \liminf_{n\to\infty} \int_0^T\frac{|\alpha_n(\tau)|^2}{2}d\tau.
\end{equation*}
Recall from Lemma~\ref{sec:setting-notation-4} that, for each $t\in[0,T]$, the map $\cP(\tilde\Gamma_C)\ni\mu\mapsto m^\mu(t)\in\cP(\cG)$ is continuous. Hence, by our assumption, for every $i\in\{1,\dots,N\}$, $L_i[m^{\mu_n}(t)](\cdot,\cdot)$ and $G_i[m^{\mu_n}(T)](\cdot)$ converge uniformly respectively to $L_i[m^{\mu}(t)](\cdot,\cdot)$ and to $G_i[m^{\mu}(T)](\cdot)$ as $n\to\infty$.
Therefore, the dominated convergence theorem yields
\[
\hat I_1\to \int_0^T\sum_{i=1}^N L_i[m^{\mu}(\tau)](y(\tau),\tau)\car_{y(\tau)\in J_i\setminus\{O\}} d\tau\qquad \textrm{and}\qquad\hat I_2\to 0, \quad\textrm{as }n\to\infty.
\]
The same arguments as in the proof of Proposition~\ref{prp:ex_OT} and the definition of~$G[m]$ in~\eqref{eq:37b} imply
\begin{equation*}
\liminf_{n\to\infty}\hat I_5\geq  G[m^\mu(T)](y(T)).
\end{equation*}
Furthermore, 
\begin{eqnarray*}
\hat I_3+\hat I_4&=&\int_0^T\left[\sum_{i=1}^N L_i[m^{\mu_n}(t)](y_n(\tau),\tau)\car_{y_n(\tau)\in J_i\setminus\{O\}} +L_O[m^{\mu_n}(t)](\tau)\car_{y_n(\tau)=O}\right]\car_{y(\tau)=O} d\tau\\
&&+\int_0^T L_O[m^{\mu_n}(t)](\tau)\car_{y_n(\tau)=O}\car_{y(\tau)\ne O} d\tau.
\end{eqnarray*}
Again the dominated convergence theorem ensures
\[
\int_0^T L_O[m^{\mu_n}(t)](\tau)\car_{y_n(\tau)=O}\car_{y(\tau)\ne O} d\tau\to 0 \qquad\textrm{as }n\to\infty.
\]
Then, from Fatou's Lemma and the boundedness of $L_i$, 
\begin{multline*}
\liminf_{n\to\infty}\int_0^T\left[\sum_{i=1}^N L_i[m^{\mu_n}(\tau)](y_n(\tau),\tau)\car_{y_n(\tau)\in J_i\setminus\{O\}} \right.\\
\qquad\left. + L_O[m^{\mu_n}(\tau)](\tau)\car_{y_n(\tau)=O}\right]\car_{y(\tau)=O} d\tau\geq \int_0^T  L_O[m^{\mu}(\tau)](\tau)\car_{y(\tau)=O} d\tau.
\end{multline*}
Combining all the observations above with \eqref{eq:34bis} yields
\begin{eqnarray}
\liminf_{n\to\infty} J^{\mu_n}(x_n;(y_n,\alpha_{y_n}))&\geq& \int_0^T\left[\frac{|\alpha(\tau)|^2}{2}+\sum_{i=1}^N L_i[m^{\mu}(\tau)](y(\tau),\tau)\car_{y(\tau)\in J_i\setminus\{O\}}\right.\notag \\&&\qquad \left.+ L_O[m^{\mu}(\tau)](\tau)\car_{y(\tau)=O}\right] d\tau
+ G[m^\mu(T)](y(T)) \notag\\ \label{eq:34quat}
&=&J^\mu(x;(y,\alpha_y)).
\end{eqnarray}
In conclusion, \eqref{eq:34}, \eqref{eq:sum1} and~\eqref{eq:34quat} entail
\[
J^\mu(x;(y,\alpha_y))\leq J^\mu(x;(\hat y,\hat \alpha)).
\]
Since $(\hat y,\hat \alpha)\in\Gamma[x]$ is
arbitrary, we get
$J^\mu(x;(y,\alpha_y))=\min_{(\hat y,\hat \alpha)\in \Gamma[x]}J^\mu(x;(\hat y,\hat \alpha))$
which is equivalent to~$(ii)$.
\end{proof}

\subsection{Proof of Theorem \ref{sec:setting-notation-6}}\label{subsect:proofmain}
Let us first recall some notations. For every $\mu\in\cP_{m_0}(\tilde\Gamma_C)$, let $J^\mu$ be the associated cost as in~\eqref{costMFG}; for any $x\in\cG$, let $\GammamuoptC[x]$ be the set of optimal paths starting from~$x$ for the cost~$J^\mu$ as in~\eqref{eq:42}. Proposition~\ref{prp:ex_OT} ensures: $\GammamuoptC[x]\ne\emptyset$ for every $x\in\cG$. It is worth recalling that the set~$\tilde\Gamma_C$ is compact, from Lemma~\ref{lemma:Gcomp}. By Prokhorov theorem \cite[Theorem 5.1.3]{AGS},  $\cP(\tilde\Gamma_C)$ is also compact.

The multivalued map $E:\cP_{m_0}(\tilde\Gamma_C)\rightrightarrows \cP_{m_0}(\tilde\Gamma_C)$  is defined as follows:
\begin{equation}\label{eq:mappa}
E(\mu)=\{\hat \mu\in\cP_{m_0}(\tilde\Gamma_C)\;:\; \textrm{supp}\,\hat \mu_x\subset \GammamuoptC[x]\quad m_0-\textrm{a.e. } x\in\cG\},
\end{equation}
where $\{\hat \mu_x\}_{x\in\cG}$ is the family of Borel probability measures on $\cP_{m_0}(\tilde\Gamma_C)$ obtained applying the disintegration Theorem~\ref{sec:setting-notation-1} to $\mu$, $X$, $Y$ and $\pi$ being replaced respectively by $\hat \mu$, $\cP_{m_0}(\tilde\Gamma_C)$, $\cG$ and $e_0$ (so, clearly, $\nu$ coincides with $m_0$).
The proof of the theorem amounts to proving that the map~$E$ admits a fixed point.
Let us assume for the moment the following properties
\begin{itemize}
\item[(i)] for every $\mu\in\cP_{m_0}(\tilde\Gamma_C)$, the set $E(\mu)$ is not empty and convex
\item[(ii)] the map~$E$ enjoys the closed graph property.
\end{itemize}
Then, Kakutani fixed point theorem ensures that $E$ admits a fixed point $\mu$. Without any loss of generality, we can complete the measure~$\mu$ and obtain a relaxed MFG equilibrium. It remains to prove the above mentioned two properties.\\
(i).  Recall that $\GammamuoptC[x]\ne\emptyset$ for every $x\in\cG$ and that the map $\GammamuoptC[\cdot]$ has the closed graph property, from Proposition~\ref{prp:ex_OT} and Proposition~\ref{prp:prop1},
Therefore, the result~\cite[Theorem 8.1.4]{AF} guarantees that the map $\GammamuoptC[\cdot]$ has a Borel measurable selection  denoted $x\mapsto y^\mu_x$ for every $x\in\cG$.
We introduce a measure $\hat \mu$ on $\tilde\Gamma_C$ as follows:
\begin{equation*}
\hat \mu(B)=\int_{\cG}\delta_{y^\mu_x}(B)\, m_0(dx)\qquad \forall  \textrm{ Borel }B\subset\tilde\Gamma_C,
\end{equation*}
where $\delta_{y^\mu_x}(\cdot)$ is the Dirac delta-function centered in $y^\mu_x$. Note that $\hat \mu_x=\delta_{y^\mu_x}$ for $m_0$-a.e. $x\in\cG$. Hence, $\hat \mu$ belongs to~$E(\mu)$.\\ 
Let us now prove that $E(\mu)$ is convex. Fix $\mu^1,\mu^2\in E(\mu)$ and $\lambda\in[0,1]$. By easy calculation, one obtains $\lambda \mu^1+(1-\lambda)\mu^2\in \cP_{m_0}(\tilde\Gamma_C)$. On the other hand, for $i=1,2$, since $\mu^i\in E(\mu)$, by the disintegration theorem~\ref{sec:setting-notation-1}, there exists a Borel measurable family $\{\mu^i_{x}\}_{x\in\cG}$ of probability measures (which is $m_0$-a.e. uniquely defined and ``disintegrate'' $\mu^i$ with respect to $m_0$) and a set ${\mathcal A}_i\subset \cG$ such that $m_0({\mathcal A}_i)=0$ and $\textrm{supp}\, \mu^i_x\subset \GammamuoptC[x]$ for every $x\in\cG\setminus {\mathcal A}_i$. 
Therefore, the measure~$\lambda \mu^1+(1-\lambda)\mu^2$ can be disintegrated as follows: for each Borel function~$f$ on~$\tilde\Gamma_C$, there holds
\[
\int_{\tilde\Gamma_C} f(\gamma)(\lambda \mu^1+(1-\lambda)\mu^2)\, (d\gamma)=
\int_{\cG}\left(\int_{\tilde\Gamma_C} f(\gamma)(\lambda \mu^1_x+(1-\lambda)\mu^2_x)\, (d\gamma)\right)m_0(dx)
\]
with $m_0({\mathcal A}_1\cup {\mathcal A}_2)=0$ and
\[\textrm{supp}\, (\lambda \mu^1_x+(1-\lambda)\mu^2_x)\subset \GammamuoptC[x]\qquad\forall x\in\cG\setminus({\mathcal A}_1\cup {\mathcal A}_2).
\]
Hence, $\lambda \mu^1+(1-\lambda)\mu^2$ belongs to~$E(\mu)$, so $E(\mu)$ is convex.\\
(ii). Consider a sequence $\{\mu_n\}_{n\in\N}$ of probability measures $\mu_n\in \cP_{m_0}(\tilde\Gamma_C)$ which narrowly converges to some $\mu\in \cP_{m_0}(\tilde\Gamma_C)$ as $n\to\infty$. Consider also a sequence $\{\hat \mu_n\}_{n\in\N}$, with $\hat \mu_n\in E(\mu_n)$ for any $n\in\N$, which narrowly converges to some $\hat\mu\in \cP_{m_0}(\tilde\Gamma_C)$ as $n\to\infty$. Our aim is to prove that $\hat \mu$ belongs to~$E(\mu)$.\\
By the disintegration theorem, there exists a $m_0$-a.e. uniquely defined Borel measurable family of measures $\{\hat \mu_x\}_{x\in\cG}$ on~$\tilde\Gamma_C$ and ${\mathcal A}\subset\cG$ such that: $m_0({\mathcal A})=0$, $\hat\mu_x(\tilde \Gamma_C\setminus e^{-1}_0(\{x\}))=0$ for every $x\in \cG\setminus {\mathcal A}$ and
\[
\int_{\tilde\Gamma_C}f(y)\hat \mu(dy)=\int_{\cG}\left(\int_{\tilde\Gamma_C[x]}f(y)\hat \mu_x(dy)\right)m_0(dx).
\]
Consider $x\in\cG\setminus {\mathcal A}$ and $\hat y\in \textrm{supp}\, \hat \mu_x$.
Kuratowski theorem (\cite[Proposition 5.1.8]{AGS}) ensures that there exists a sequence $\{y_n\}_{n\in\N}$, with $y_n\in\textrm{supp}\, \hat \mu_n$, which converges to~$\hat y$ in the topology of~$\tilde\Gamma_C$. Let $x_n=e_0(y_n)$. Since $\hat \mu_n\in E(\mu_n)$, there holds: $y_n\in \tilde\Gamma_C^{\mu_n,\rm{opt}}[x_n]$. By Proposition~\ref{prop2}, we infer $\hat y\in \tilde\Gamma_C^{\mu,\rm{opt}}[x]$. By the arbitrariness of $\hat y\in\textrm{supp}\, \hat \mu_x$, we obtain $\textrm{supp}\, \hat \mu_x\subset \tilde\Gamma_C^{\mu,\rm{opt}}[x]$ and consequently, by the arbitrariness of $x\in \cG\setminus {\mathcal A}$, that $\hat \mu$ belongs to~$E(\mu)$. 
\hfill\qed

%
%
\subsection{Proof of Theorem~\ref{thm:MFGlip} }\label{subsect:proofmain2}
This paragraph contains the proof of Theorem~\ref{thm:MFGlip}. We proceed adapting the proof of Theorem \ref{sec:setting-notation-6} and using some ideas from \cite[Theorem 4.1]{CCC1}. We consider the multivalued map~$E$, defined in~\eqref{eq:mappa} which has the closed graph property (see point~$(ii)$ in the proof of Theorem~\ref{sec:setting-notation-6}). We then introduce the multivalued map~$E_0$ as the restriction of~$E$ to the set $\cPmzeroGammaLipCV$ where $C$ and $V$ are chosen as in the statement of the theorem. The proof consists of checking that $E_0$ fulfills the hypotheses of Kakutani fixed point theorem. To this end, we need to check that
\begin{itemize}
\item[(i)] $E_0(\mu)\subset \cPmzeroGammaLipCV$, $\forall\mu\in \cPmzeroGammaLipCV$
\item[(ii)] $\cPmzeroGammaLipCV$ is compact
\item[(iii)] $E_0(\mu)$ is a not empty convex set, $\forall\mu\in \cPmzeroGammaLipCV$
\item[(iv)] $E_0$ has the closed graph property.
\end{itemize}
Let us successively address the four properties.\\
$(i)$. Consider $\mu\in \cPmzeroGammaLipCV$. As in the proof of Theorem \ref{sec:setting-notation-6}, we see that $\tilde\Gamma_C^{\mu,\rm{opt}}[x]\ne\emptyset$ for any $x\in\textrm{supp}(m_0)$. On the other hand, from Theorem~\ref{optcur_lip}, $\tilde\Gamma_C^{\mu,\rm{opt}}[x]\subset \Gamma^{\textrm{Lip}}_{C,V}[x]$. Hence, $\textrm{supp}(\hat\mu)\subset \Gamma^{\textrm{Lip}}_{C,V}$, for any $\hat\mu\in E_0(\mu)$. Invoking \cite[Lemma 4.1]{CCC1}, we get: $\hat\mu\in \cPmzeroGammaLipCV$ and the proof of $(i)$ is achieved.\\
$(ii)$. From Lemma~\ref{lemma:Gcomp}, the set $\tilde \Gamma_C$ is compact. Then, from Ascoli-Arzel\`a Theorem,  $\Gamma^{\textrm{Lip}}_{C,V}$ is compact. From Prokhorov theorem, $\cP^{\textrm{Lip}}(\Gamma^{\textrm{Lip}}_{C,V})$ is compact so, in particular, $\cPmzeroGammaLipCV$ is compact.\\
$(iii)$ and $(iv)$. These properties have already been obtained in the proof of Theorem \ref{sec:setting-notation-6}. We refer the reader to that proof for the details.\\
In conclusion, by Kakutani theorem, there exists a fixed point of the map~$E_0$, namely a  relaxed MFG equilibrium in $\cPmzeroGammaLipCV$.  \hfill\qed

\subsection{Mild solutions}

Let $\mu\in\cP_{m_0}(\Gamma)$ be a relaxed MFG equilibrium whose existence is guaranteed by 
Theorem~\ref{sec:setting-notation-6}. We consider the value function naturally associated to $\mu$:
\begin{equation}\label{eq:4.1}
u(x,t)= \inf_{( y_x, \alpha)\in \Gamma_t[x]}  J^\mu_t(x;( y_x, \alpha) ),
\end{equation}
where $J^\mu_t$ is the cost defined in~\eqref{costMFG}.
\begin{definition}\label{def:mildsol}
Let $\mu\in\cP_{m_0}(\Gamma)$ be a relaxed MFG equilibrium. The pair $(u,m)$ is the associated mild solution if $u$ is the value function defined in~\eqref{eq:4.1} and $m\in C([0,T],\cP(\cG))$ is defined by $m(t)=e_t\#\mu$.
\end{definition}
\begin{remark}\label{rmk:reg_m}
Lemma~\ref{lemma:3.1bis} ensures that $m\in C^{1/2}([0,T],\cP(\cG))$.
\end{remark}

For simplicity of notations, we set
\begin{equation}\label{eq:cost_ell_MFG}
\ell_i(x,t)=L_i[m^{\mu}(t)](x,t)\qquad\textrm{and} \qquad g_i(x)=G_i[m^{\mu}(T)](x) \qquad\forall (x,t)\in \cG\times[0,T]
\end{equation}
and we shall also use the abridged notation $L$ as in~\eqref{eq:462}. 
The costs $\ell_i$ are those  payed by the agents in the MFG. By Lemma~\ref{lemma:3.1bis}, the functions~$\ell_i$ fulfill assumption~$(H_1)$. 

The purpose of this section is to derive several properties of the value function from the results of Section~\ref{sec:OC}.
As a preliminary step, invoking Proposition~\ref{prp:prop_vf}, Proposition~\ref{prp:U_cont}, Remark~\ref{rmk:U_holder} and Lemma~\ref{lemma:3.1bis}, we obtain the following proposition:
%
%
\begin{proposition}\label{prp:prop_u}
Under the  assumptions of Theorem~\ref{sec:setting-notation-6}, the value function~$u$ defined in~\eqref{eq:4.1} has the following properties:
\begin{itemize}
\item[(i)] (Dynamic programming principle) 
\[
u(x,t)= \inf_{( y_x, \alpha)\in \Gamma_{t,\bar t}[x]}\left\{
u(y_x(\bar t),\bar t)+\int_t^{\bar t} \left( L(y_x(\tau),\tau) +\frac{|\alpha(\tau)|^2}{2} \right)d\tau
\right\}
\]
where
\[
\Gamma_{t,\bar t}[x]=\left\{
\begin{array}[c]{ll}
(y_x, \alpha)  \ds \in L^2([t,\bar t],M): \quad   & y_x\in W^{1,2} ([t,\bar t];\cG), \\
&\ds  y_x(s)=x+\int_t^s \alpha(\tau) d\tau \quad  \hbox{ in }[t,\bar t]
\end{array}\right\};
\]
\item[(ii)] the value function is continuous in~$\cG\times[0,T)$.
\end{itemize}
\end{proposition}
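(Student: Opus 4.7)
The strategy is to freeze the equilibrium~$\mu$ and recognise~$u$ as the value function of a single-agent optimal control problem of the form treated in Section~\ref{sec:OC}, to which Propositions~\ref{prp:prop_vf} and~\ref{prp:U_cont} can then be applied verbatim. Concretely, the effective running costs in the edges are those introduced in~\eqref{eq:cost_ell_MFG}, while the extra vertex pieces needed to parallel the setup of Section~\ref{sec:OC} are $\ell_*(t):=L_*[m^\mu(t)](t)$ and $g_*:=G_*[m^\mu(T)]$. With these identifications, the aggregated running cost~$L$ of~\eqref{eq:462} matches the MFG running cost $L[m^\mu(\tau)](\cdot,\tau)$ appearing in the functional~\eqref{costMFG}, and the terminal cost matches~\eqref{eq:461}. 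Once this dictionary is set, (i) is a direct transcription of Proposition~\ref{prp:prop_vf} and (ii) is a direct transcription of Proposition~\ref{prp:U_cont}, provided hypothesis [H1] holds for the effective costs.

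The only substantive task is therefore the verification of [H1]. Boundedness of all the $\ell_i$'s, of $\ell_*$, of the $g_i$'s and of $g_*$ is immediate from the uniform bound~$K$ in assumption~\eqref{eq:37}. For continuity, the key input is Lemma~\ref{lemma:3.1bis}, which asserts that $t\mapsto m^\mu(t)$ is $1/2$-Hölder continuous in the Wasserstein-$1$ metric, and hence continuous for the narrow topology. Composing this with the standing hypothesis that $L_i:\cP(\cG)\to C(\cG\times[0,T])$ is continuous yields that $t\mapsto L_i[m^\mu(t)]\in C(\cG\times[0,T])$ is continuous; evaluating at~$\tau$ and using the uniform continuity of elements of $C(\cG\times[0,T])$ then produces joint continuity of $(x,\tau)\mapsto\ell_i(x,\tau)$ on each $J_i\times[0,T]$. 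The same argument, with $C([0,T])$ in place of $C(\cG\times[0,T])$, handles $\ell_*$, and the continuity of $g_i$ on~$J_i$ is a direct consequence of the assumption $G_i:\cP(\cG)\to C(\cG)$.

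With [H1] thus established for the effective costs, (i) follows from Proposition~\ref{prp:prop_vf} and (ii) from Proposition~\ref{prp:U_cont}, whose proofs relied only on [H0] and [H1] and made no use of the particular origin of the running and terminal costs. I do not foresee any genuine obstacle; the sole subtlety worth flagging is that the modulus of continuity in time obtained this way is the modulus of continuity of~$L_i$ on $\cP(\cG)$ composed with the $1/2$-Hölder modulus of $t\mapsto m^\mu(t)$ coming from Lemma~\ref{lemma:3.1bis}, and is in general merely continuous rather than Hölder. For this reason, the Hölder regularity of~$u$ recorded in Remark~\ref{rmk:U_holder} does not automatically transfer to the MFG setting: it would require an additional quantitative (e.g.~Hölder or Lipschitz) dependence of $L_i$ on $m$ with respect to the Wasserstein distance, which is not assumed here.
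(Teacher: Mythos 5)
Your proposal is correct and follows essentially the same route as the paper, which likewise freezes the equilibrium $\mu$, notes (just before the proposition) that the effective costs $\ell_i$ of~\eqref{eq:cost_ell_MFG} satisfy [H1] thanks to Lemma~\ref{lemma:3.1bis} and the uniform bound~\eqref{eq:37}, and then simply invokes Propositions~\ref{prp:prop_vf} and~\ref{prp:U_cont}. Your closing caveat about the H\"older regularity of Remark~\ref{rmk:U_holder} not transferring without a quantitative modulus for $m\mapsto L_i[m]$ is a fair and pertinent observation, but it does not affect the statement as claimed.
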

Applying Theorem~\ref{thm:HJ}, it can now be proved that $u$ solves the HJ problem associated with the costs~$\ell_i$.

\begin{theorem}\label{thm:mildMFG}
Under the same assumptions as in Theorem~\ref{sec:setting-notation-6}, the value function~$u$ defined in~\eqref{eq:4.1} is a solution to \eqref{HJ} with the costs~$\ell_i$ defined in~\eqref{eq:cost_ell_MFG}. Moreover, for all $x\in \cG$, $t\mapsto u(x,t)$ is continuous in $ [0,T]$ and $u(x,T)=g(x)$ with the costs $g_i$  defined in~\eqref{eq:cost_ell_MFG}.
\end{theorem}
\begin{corollary}\label{cor:MFGulip}
Under the same assumptions as in Theorem~\ref{thm:MFGlip},  
there holds
\begin{itemize}
\item[(a)] $u$ is locally Lipschitz continuous in~$\cG\times[0,T)$
\item[(b)] if, moreover, $G[m]$ is Lipschitz continuous for every $m\in\cP(\cG)$, then ~$u$ is locally Lipschitz continuous in~$\cG\times[0,T]$.
\end{itemize}
\end{corollary}
\begin{proof}
Assumption~\eqref{eq:431} entails that the costs $\ell_i$ and $g_i$ associated to~$\mu$ in~\eqref{eq:cost_ell_MFG} fulfill the assumption of Theorem~\ref{optcur_lip}. Hence, for proving points~$(a)$ and~$(b)$, it is enough to apply respectively Proposition~\ref{prop:loclip} and Corollary~\ref{cor:ulipT}.
\end{proof}
\begin{remark}[Uniqueness of the mild solution] 
We say that $F:\cG\times\cP(\cG)\to \R$ is monotone if, for any $m_1,m_2\in \cP(\cG)$, there holds $\int_{\cG} (F(x,m_1)-F(x,m_2))(m_1-m_2)(dx)\ge 0$. The strict monotonicity holds if furthermore $\int_{\cG} (F(x,m_1)-F(x,m_2))(m_1-m_2)(dx)=0$ if and only if $F(\cdot,m_1)\equiv F(\cdot,m_2)$.
 If for all $t$, $(x,m)\mapsto L[m](x,t)$ and $(x,m)\mapsto G[m](x)$ are strictly monotone, then it can be proved with the same arguments as in \cite[Theorem 4.1 and Remark 4.1]{CC}, that if $(u_1,m_1)$ and $(u_2,m_2)$ are mild solutions respectively associated to two  relaxed equilibria $\mu_1$ and $\mu_2$,  then $u_1=u_2$. Under a more restrictive monotonicity assumption on $L$, it can also be proved that $m_1=m_2$.
\\
It is worth noticing that the uniqueness of the mild solution does not imply the uniqueness of the relaxed MFG equilibrium as  shown in the following example.
\end{remark}

\begin{example}\label{exa:no!MFGeq}
Let us exhibit two probabilities $\mu_1,\mu_2\in\cP(\Gamma)$ such that
\begin{equation}\label{eq:no!MFGeq}
\mu_1\ne\mu_2,\qquad\textrm{and} \qquad e_t\#\mu_1=e_t\#\mu_2,\qquad\forall t\in[0,T].
\end{equation}
For $0<t_1\leq t_2<T$, consider four paths $\gamma_i\in \Gamma$ ($i=1,\dots,4$) such that 
\begin{equation*}
\begin{array}{ll}
\gamma_1=\gamma_2\qquad\textrm{and}\qquad \gamma_3=\gamma_4 &\qquad \textrm{on }[0,t_1]\\
\gamma_1=\gamma_2=\gamma_3=\gamma_4 &\qquad \textrm{on }[t_1,t_2]\\
\gamma_1=\gamma_3\qquad\textrm{and}\qquad \gamma_2=\gamma_4 &\qquad \textrm{on }[t_2,T],
\end{array}
\end{equation*}
and such that $\gamma_1$ does not coincide with $\gamma_3$ on $[0,t_1]$ and with $\gamma_2$ on $[t_2,T]$.
Then \eqref{eq:no!MFGeq} holds for the probabilities on $\Gamma$ defined by
\begin{equation*}
\mu_1=\frac14\sum_{i=1}^4 \delta_{\gamma_i},\quad \hbox{ and} \quad \mu_2=\frac12 \delta_{\gamma_1} + \frac12 \delta_{\gamma_4}.
\end{equation*}
\end{example}
Let us provide examples of strictly monotone operators.

\begin{example}\label{exa:no!MFGeq2}
Fix a function $\kappa\in C^\infty_0(\R)$ with $0\leq \kappa \leq 1$, $\kappa'(0)=\kappa''(0)=0$ and let $K:\cG\times\cG\to \R$ be  defined by $K(x,y)= \kappa(d(x,y))$ where $d$ is the distance in~$\cG$.
The function $K$ has the following properties:
\begin{equation*}\label{eq:26giu_1}
K(x,y)=K(y,x), \quad \forall x,y\in\cG.
\end{equation*}
For any $m\in\cP(\cG)$, define $K*m:\cG\to\R$  by 
\begin{equation*}
K*m(x)=\int_{\cG} K(x,y) m(dy).
\end{equation*}
Let the running cost $L:\cP(\cG)\times\cG\to \R$ be defined by
\begin{equation*}
L[m](x)=\int_\cG \ell \left(y,K*m(y)\right) K(x,y)dy
\end{equation*}
where $\ell:\cG\times \R\to\R$ is a smooth function such that $w\mapsto \ell(y,w)$ is strictly increasing for every $y\in\cG$. It is standard that 
\begin{equation}\label{eq:26_giu_2}
\int_{\cG} (L[m_1](x)-L[m_2](x)) (m_1-m_2)(dx)\ge 0,
\end{equation}
and the equality in \eqref{eq:26_giu_2} holds true if and only if $L[m_1](x)=L[m_2](x)$ for every $x\in\cG$.

Note that  $x\mapsto L[m](x)$ is continuous on $\cG$, and $C^2$ on $J_i\setminus\{0\}$, $i=1,\dots, N$.

\end{example} 

\begin{example}\label{exa:no!MFGeq3}
With $L$ defined in Example \ref{exa:no!MFGeq2},
consider 
\begin{equation*}
\widetilde{L}[m](x)=
L[m](x) + \sum_{i=1}^N a_i  \car_{x\in J_i\setminus\{O\}}  
\end{equation*}  
for a collection $(a_i)_{1\le i \le N}$ of positive weights.
Clearly $\widetilde L$ is strictly monotone, Lipschitz continuous w.r.t. $m$ (for the Wasserstein distance ${\hbox{Wass}}_1$) and  $\widetilde L[m]$ is discontinuous in $x$ at $O$.
\end{example} 

\subsection{Regularity of~$u$ in the interior of the edges}
In what follows, we collect several properties of the value function in a mild solution, starting with easy consequences of the results contained in Section~\ref{sec:OC}. Then we aim at obtaining more accurate information at the points $(x,t)$ such that $x$ lies in the support of $m(t)$, i.e. the points that are actually hit by optimal trajectory.
\begin{lemma}\label{lemma:reg_u_mfg}
We make the same assumptions as in Theorem~\ref{thm:MFGlip}. Let $\mu\in\cP_{m_0}(\Gamma)$ be a relaxed MFG equilibrium and $(u,m)$ be  the related mild solution. 
\begin{itemize}
\item[(a)] The function $u$ is locally semi-concave in $(J_i\setminus\{O\})\times(0,T)$
\item[(b)] Lemma~\ref{lemma:note4.9} holds replacing $\Gamma_t^{\textrm{opt}}[x]$ with $\Gamma_t^{\textrm{opt},\mu}[x]$, in particular, the characterization of the optimal control with the $\partial_x u$ away from the vertex
\item[(c)] Corollary \ref{cor:10000}  holds replacing $\Gamma^{\textrm{opt}}[x]$ with $\Gamma^{\textrm{opt},\mu}[x]$ 
\item[(d)] Lemma~\ref{lemma:OS} holds.
\end{itemize}
\end{lemma}
\begin{proof}
Assumption~\eqref{eq:431} entails that the costs $\ell_i$ and $g_i$ associated to~$\mu$ in~\eqref{eq:cost_ell_MFG} fulfill the assumption of Theorem~\ref{optcur_lip}. We end the proof by applying Proposition~\ref{prop:semiconc_loc}, Lemma~\ref{lemma:note4.9},  
Corollary \ref{cor:10000} 
and Lemma~\ref{lemma:OS}.
\end{proof}
Next, let us prove that $u$ is a bilateral subsolution (see \cite[Definition III.2.27]{BCD}) of the Hamilton-Jacobi equation and is differentiable at least at the points~$(x,t)$ such that $x$ belongs to the support of $m(t)$ and does not coincide with~$O$. To this end, some new notations are useful. Set
\begin{eqnarray*}
\textrm{supp}(m(t))&=&\left\{x\in\cG\;:\; \forall \omega, \textrm{ open neighborhood of }x,\quad m(t)(\omega)>0\right\}\\
Q_m&=&\left\{(x,t)\in\cG\times(0,T)\;:\; x\in(\cG\setminus\{O\})\cap \textrm{supp}(m(t))\right\}\\
\partial Q_m&=&\left\{(O,t)\;:\; t\in(0,T],\quad O\in \textrm{supp}(m(t))\right\}
\end{eqnarray*}
and introduce the subdifferential of $u$ at $(x,t)\in Q_m$ as
\begin{equation*}
D^+u(x,t)=\left\{(\pi,q)\in \R^2\;:\; \limsup_{y\to x,\theta\to t}\frac{u(y,\theta)-u(x,t)-\pi(\theta-t)-q(\bar y-\bar x)}{|\bar y-\bar x|+|\theta-t|}\leq 0\right\},
\end{equation*}
where $x=\bar x e_j$, $y=\bar ye_j$ (note that   $j$ is uniquely defined, from the definition of~$Q_m$).
\begin{remark}\label{rmk:5_13apr}
Similar arguments as those in the proof of \cite[Theorem 4.5]{CCC2} yield that for any $t\in(0,T)$, for $\mu$-a.e. $\gamma\in\G$, the point $(\gamma(t),t)$ belongs to $Q_m\cup \partial Q_m$.
\end{remark}
\begin{proposition}\label{prp:CCC2_4.1+4.2}
The assumptions are those of Theorem \ref{thm:MFGlip} and Corollary \ref{cor:MFGulip}. Let $\mu\in\cP_{m_0}(\G)$ be a relaxed MFG equilibrium and $(u,m)$ be the associated mild solution. Then, for any $(x,t)\in Q_m$, 
\begin{itemize}
\item[$(a)$] there holds
\begin{equation*}
-p_1+H(x,t,p_2)=0\qquad\forall(p_1,p_2)\in D^+u(x,t)
\end{equation*}
\item[$(b)$] $u$ is differentiable at $(x,t)$.
\end{itemize}
\end{proposition}
\begin{proof}
\noindent $(a)$. The arguments are reminiscent of those used in ~\cite[Theorem 4.1]{CCC2}. Fix $(x,t)\in Q_m$ and consider $(p_1,p_2)\in D^+u(x,t)$. Without any loss of generality, let us assume that $x\in J_i$ with $x=\bar x e_i$. From Theorem~\ref{thm:mildMFG}, $u$ is a viscosity solution to problem~\eqref{HJ} with the cost $\ell_i$ defined in~\eqref{eq:cost_ell_MFG} and $g=G[m^\nu(T)]$. Hereafter, for simplicity, we refer to \eqref{HJ} as {\sl the HJ-problem}. Since $u$ is a viscosity subsolution to the HJ-problem, 
\begin{equation*}
-p_1+H(x,t,p_2)\leq 0.
\end{equation*}
Let us now prove the reverse inequality. Since $x\in\textrm{supp}(m(t))$, there exists a trajectory $(\gamma,\gamma')\in \Gamma^{{\rm opt}}[\gamma(0)]$ with $\gamma(t)=x$. Let $r$ be small enough such that $\gamma(t-s)\in J_i\setminus\{O\}$ for every $s\in[0,r]$; we write $\gamma(t-s)=\bar \gamma(t-s)e_i$. From the  definition of the subdifferential, 
\begin{equation*}
u(\gamma(t-s),t-s)-u(x,t)\leq -p_1s-p_2(x-\bar \gamma(t-s))+o(r)=-p_1s-p_2\int_{t-s}^t\gamma'(\tau)d\tau+o(r).
\end{equation*}
On the other hand, from Remark~\ref{rmk:restr_OC}, $(\gamma_{|[t-s,T]},\gamma'_{|[t-s,T]})$ and $(\gamma_{|[t,T]},\gamma'_{|[t,T]})$ belong respectively to $\Gamma^{{\rm opt}}_{t-s}[\gamma(t-s)]$ and to $\Gamma^{{\rm opt}}_{t}[\gamma(t)]$. Hence, 
\begin{equation*}
u(\gamma(t-s),t-s)-u(x,t)= \int_{t-s}^t\left(\frac{|\gamma'(\tau)|^2}2+\ell[m(\tau)](\gamma(\tau))\right)d\tau.
\end{equation*}
The latter two observations yield
\begin{equation*}
\int_{t-s}^t\left(\frac{|\gamma'(\tau)|^2}2+\ell_i[m(\tau)](\gamma(\tau))\right)d\tau\leq -p_1s-p_2\int_{t-s}^t\gamma'(\tau)d\tau+o(r).
\end{equation*}
Next, the regularity of~$m$ (see Theorem~\ref{thm:MFGlip}) and of~$\gamma$ in~$(t-s,t)$ (see the Euler-Lagrange relation in Lemma~\ref{lemma:EL}) entail
\begin{equation*}
\ell_i[m(\tau)](\gamma(\tau))=\ell_i[m(t)](x)+O(s),\quad\textrm{and}\qquad
\gamma'(\tau)= \gamma'(t)+o(1)
\end{equation*}
for any $\tau\in(t-s,t)$. This implies
\begin{equation*}
\frac{1}{s}\int_{t-s}^t\left(\frac{|\gamma'(t)|^2}2+\ell_i[m(t)](x)\right)d\tau+o(1)\leq -p_1-p_2\gamma'(t)+o(1).
\end{equation*}
Letting $s\to 0^+$, we infer
\begin{equation*}
0\leq -p_1-p_2\gamma'(t)-\frac{|\gamma'(t)|^2}2-\ell_i[m(t)](x)\leq -p_1+H(x,t,p_2)
\end{equation*}
where the last inequality comes from the  definition of~$H$, see \eqref{eq:1dic_easy0}.\\
\noindent $(b)$. Point $(b)$ is obtained with the arguments in the proof of \cite[Proposition 4.2]{CCC2} replacing \cite[Theorem 4.1]{CCC2} and 
\cite[Corollary 4.1]{CCC2} respectively with  point~$(a)$ and Lemma~\ref{lemma:reg_u_mfg}-$(a)$.
\end{proof}

\subsection{Properties of $m$}
Consider a mild solution $(u,m)$ associated to a given relaxed MFG equilibrium~$\mu\in \cPmzeroGammaLipCV$. Here, we wish to investigate the behaviour of the  point masses of $m$ if they exist.\\
Let us recall from Example~\ref{exa:dirac} and Example~\ref{exa:moving_Dirac} that $m$ may develop a singularity of the form of a point mass at the origin and that the latter singularity may be transported into the edges. Below, we prove that each singular point conserves its mass when it travels in the interior of an edge. This implies that point masses cannot
appear/vanish in the interior of a given edge. In particular, the creation of a point mass can occur only at the vertex.
Finally, we  provide an example with two vertices in which  $m$ is a Dirac mass at the first vertex until some time $t_1$, a Dirac mass at the second vertex after $t_2>t_1$, and in which there is no mass points  between the two vertices at all $t$, $t_1<t<t_2$.

\begin{theorem}\label{thm:sing_in_edge}
Under the assumptions of Theorem~\ref{thm:MFGlip}, let $\mu\in \cPmzeroGammaLipCV$ be a relaxed MFG equilibrium and $(u,m)$ be the corresponding mild solution. Consider 
$x\in\textrm{supp}(m(t))\cap J_j\setminus\{O\}$ for some $j=1,\dots,N$ 
and  $t\in(0,T)$. The following holds:
\begin{itemize}
\item[$(a)$] there exists $x_0\in\textrm{supp}(m_0)$ and $\gamma\in \Gamma^{\textrm{Lip}}_{C,V}$ with $(\gamma,\alpha_\gamma)\in \Gamma^{\mu,\textrm{opt}}[x_0]$ and $\gamma(t)=x$ (recall that the control $\alpha_\gamma$ was introduced in Remark~\ref{rmk:alpha_y})
\item[$(b)$] for $i=1,2$, consider $x_{0,i}$ and $\gamma_i$ satisfying point~$(a)$ and denote $t_{*,i}=\sup\{s\in[0,t]\:;\: \gamma_i(s)=O\}$ and $t_{*,i}=0$ if the latter set is empty and, similarly, $t^{*,i}=\inf\{s\in[t,T]\:;\: \gamma_i(s)=O\}$ and $t^{*,i}=T$ if the latter set is empty. Then, there holds
\begin{equation*}
t_{*,1}=t_{*,2}=:t_* \qquad t^{*,1}=t^{*,2}=:t^* \qquad\textrm{and} \qquad \gamma_1=\gamma_2\quad\textrm{on }(t_{*},t^*)
\end{equation*}
\item[$(c)$] for every $s\in(t_*,t^*)$ and every $\gamma$ as in point~$(a)$, there holds: $m(s)(\{\gamma(s)\})=m(t)(\{x\})$.
\end{itemize}
\end{theorem}
\begin{proof}
$(a)$.
For every positive $r$,  $m(t)(B(x,r))>0$. Hence, 
\begin{eqnarray*}
0&<&m(t)(B(x,r))=\int_{\cG}\car_{\{\xi\in\cG \cap B(x,r)\}}m(t)(d\xi)\\
&=&\int_{\{y\in\Gamma^{\textrm{Lip}}_{C,V}\;:\;(y,\alpha_y)\in\Gamma^{\mu,\textrm{opt}},\, y(0)\in\textrm{supp}(m_0)\}}\car_{\{y\;:\;y(t)\in B(x,r)\}}\mu(dy)
\end{eqnarray*}
where the last equality comes from the definition of $m$.
Consequently the set
\begin{equation*}
E=\left\{y\in\Gamma^{\textrm{Lip}}_{C,V}\;:\;(y,\alpha_y)\in\Gamma^{\mu,\textrm{opt}},\, y(0)\in\textrm{supp}(m_0),\,y(t)\in B(x,r)\right\}
\end{equation*}
is not empty for every  $r>0$.
We infer that there exist a sequence $\{x_n\}_n$, with $x_n\in B(x,1/n)$ and a sequence $\gamma_n\in \Gamma^{\textrm{Lip}}_{C,V}$ with $(\gamma_n,\alpha_{\gamma_n})\in\Gamma^{\mu,\textrm{opt}}$, $\gamma_n(0)\in\textrm{supp}(m_0)$ and $\gamma_n(t)=x_n$.
By standard arguments, we see that, as $n\to \infty$, $\gamma_n$ uniformly converge to some path $\gamma\in \Gamma^{\textrm{Lip}}_{C,V}$ with $\gamma(0)\in\textrm{supp}(m_0)$ and $\gamma(t)=x$. From the  stability of optimal trajectories, $(\gamma,\alpha_\gamma)$ belongs to $\Gamma^{\mu,\textrm{opt}}$. Point $(a)$ is proved.\\
$(b)$. It is a direct consequence of Lemma~\ref{lemma:reg_u_mfg}-$(c)$.\\
$(c)$. Consider $s\in(t_*,t^*)$. 
The definition of~$m$ entails
\begin{eqnarray}\label{eq:2_27apr}
m(t)(\{x\})
&=&\int_{\{y\in\Gamma^{\textrm{Lip}}_{C,V}\;:\;(y,\alpha_y)\in\Gamma^{\mu,\textrm{opt}},\, y(0)\in\textrm{supp}(m_0)\}}\car_{\{y\;:\;y(t)=\gamma(t)\}}\mu(dy)
\end{eqnarray}
where $\alpha_y$ is the control defined in Remark~\ref{rmk:alpha_y}.
From point~$(b)$, there holds
\begin{multline*}
\{y\in\Gamma^{\textrm{Lip}}_{C,V}\;:\;(y,\alpha_y)\in\Gamma^{\mu,\textrm{opt}},\, y(0)\in\textrm{supp}(m_0),\, y(t)=\gamma(t)\}\\=\{y\in\Gamma^{\textrm{Lip}}_{C,V}\;:\;(y,\alpha_y)\in\Gamma^{\mu,\textrm{opt}},\, y(0)\in\textrm{supp}(m_0),\, y(s)=\gamma(s)\}
\end{multline*}
for every $s\in(t_*,t^*)$. Combining the latter identity  and \eqref{eq:2_27apr} yields
\begin{equation*}
m(t)(\{x\})=\int_{\{y\in\Gamma^{\textrm{Lip}}_{C,V}\;:\;(y,\alpha_y)\in\Gamma^{\mu,\textrm{opt}},\, y(0)\in\textrm{supp}(m_0)\}}\car_{\{y\;:\;y(s)=\gamma(s)\}}\mu(dy)=m(s)(\{\gamma(s)\})
\end{equation*}
for every $s\in(t_*,t^*)$ which is our statement.
\end{proof}
The following result is  direct consequence of Theorem~\ref{thm:sing_in_edge}-$(c)$.
\begin{proposition}\label{cor:sin_in_edge}
Under the hypotheses of Theorem~\ref{thm:MFGlip}, let $\mu\in \cPmzeroGammaLipCV$ be a relaxed  MFG equilibrium and $(u,m)$ be the corresponding mild solution. For every $\gamma$ as in Theorem~\ref{thm:sing_in_edge}-$(a)$ such that there exists $t\in(0,T)$ with $\gamma(t)\in J_i\setminus\{O\}$, there holds
\begin{equation*}
m(s)(\{\gamma(s)\})=m(s')(\{\gamma(s')\}) \qquad \forall s,s'\in(t_*,t^*)
\end{equation*}
where
\begin{equation*}
t_*=\sup\{s\in[0,t]\;:\; \gamma(s)=O\},\qquad t^*=\inf\{s\in[t,T]\;:\; \gamma(s)=O\}
\end{equation*}
($t_*=0$ and respectively $t^*=T$ when the corresponding set is empty).  This implies that if $m$ has a point mass, then the latter is  conserved as long as it stays in the interior of a given edge.
\end{proposition}
We now focus on the case when no optimal trajectory hits $x$ at time $t$.
\begin{proposition}\label{prp:0mass}
Let $\mu\in \cPmzeroGammaLipCV$ be a relaxed  MFG equilibrium and $(u,m)$ be the corresponding mild solution. If $m(t)(\{x\})=0$ and point~$(a)$ in Theorem~\ref{thm:sing_in_edge} does not hold, then there exists $\delta>0$ such that
\begin{equation*}
m(s)(\{x\})=0 \qquad \forall s\in(t-\delta,t+\delta).
\end{equation*}
\end{proposition}
\begin{proof}
From Theorem~\ref{thm:sing_in_edge}-$(a)$, $x\notin \textrm{supp}(m(t))$. Then, there exists a positive number $r$ such that $B(x,r)\cap \textrm{supp}(m(t))=\emptyset$ and, consequently, 
the set
\begin{equation*}
E=\left\{y\in\Gamma^{\textrm{Lip}}_{C,V}\;:\;(y,\alpha_y)\in\Gamma^{\mu,\textrm{opt}},\, y(0)\in\textrm{supp}(m_0),\,y(t)\in B(x,r)\right\}
\end{equation*}
is negligible for the measure~$\mu$.
Taking into account the uniform Lipschitz continuity of the optimal trajectories, we obtain that there exists a sufficiently small $\delta>0$ such that
\begin{equation*}
E(s):=\left\{y\in\Gamma^{\textrm{Lip}}_{C,V}\;:\;(y,\alpha_y)\in\Gamma^{\mu,\textrm{opt}},\, y(0)\in\textrm{supp}(m_0),\,y(s)\in B(x,r/2)\right\}\subset E
\end{equation*}
for every $s\in(t-\delta,t+\delta)$. Since $\mu$ is complete, $E(s)$ is also negligible for~$\mu$ and 
\begin{equation*}
m(s)(B(x,r/2))=0\qquad\forall s\in(t-\delta,t+\delta),
\end{equation*}
which achieves the proof.
\end{proof}

We now provide an example with two vertices 
in which 
\begin{itemize}
    \item there is a Dirac mass at the first vertex which disappears 
    \item a Dirac mass arises at the second vertex
     \item  no Dirac mass travels in the edge between the two vertices.
\end{itemize}

\begin{example}\label{exa:sing_disapp} 
With the same network as in Example \ref{exa:moving_Dirac}, consider the costs which do not depend on $m$ (no interaction between the agents):
\begin{equation*}
L[m](x)=\left\{\begin{array}{ll}
K_L&\quad\textrm{if }x\in\cG\setminus J_2,\\ 1&\quad\textrm{if }x\in J_2\setminus\{V_1,V_2\},\\ 0&\quad\textrm{if }x\in \{V_1,V_2\},
\end{array}\right.\quad\hbox{and }\quad 
G[m](x)=\left\{\begin{array}{ll}
K_G&\quad\textrm{if }x\ne V_2,\\ 0 &\quad\textrm{if }x=V_2,
\end{array}\right.
\end{equation*}
where $K_L>1$ and $K_G$ are positive constants that will chosen later (note that they fulfill assumptions~\eqref{eq:460} and \eqref{eq:461}). The time horizon $T$ will be chosen later. Take $m_0=\delta_{V_1}$. It is obvious that every optimal trajectory $(y,\alpha)$ with $y(0)=V_1$ must remain at $V_1$ until a time $\tau_1\in(0,T]$, then  move inside $J_2$ if $\tau_1<T$ so to reach $V_2$ at some time $\tau_2\in(\tau_1,T]$ and finally remain at $V_2$ until  $T$. The constants  $T$, $K_L$ and $K_G$ will be chosen sufficiently large so that $\tau_1<T$.

From the Euler-Lagrange condition \eqref{EL_giugno}, there exists $c\in\R$ such that $\alpha(s)=c$ for $s\in (\tau_1,\tau_2)$ with $\tau_2=\tau_1+1/c$.
Hence, 
\begin{equation*}
y(s)=\left\{\begin{array}{ll}
V_1&\quad\textrm{for }s\in[0,\tau_1],\\
c (s-\tau_1)e_2&\quad\textrm{for }s\in(\tau_1,\tau_1+1/c],\\
V_2&\quad\textrm{for }s\in(\tau_2, T].
\end{array}\right.
\end{equation*}
Let us minimize the cost $J_0(V_1;(y,\alpha))$ with respect to $\tau_1$ and $c$. There holds
\begin{equation*}
J_0(V_1;(y,\alpha))=\int_{\tau_1}^{\tau_1+1/c}\left(\frac{c^2}{2}+1\right)ds=\frac{c}{2}+\frac{1}{c}.
\end{equation*}
Hence, the minimum of $J_0(V_1;(y,\alpha))$ is achieved by $c=\sqrt{2}$ independently of $\tau_1$.

Let us  take $T$ larger than $1+1/\sqrt{2}$ and  introduce the family  $\{y_\tau\}_{\tau\in[0,1]}$ \begin{equation*}
y_\tau(s)=\left\{\begin{array}{ll}
V_1&\quad\textrm{for }s\in[0,\tau],\\
\sqrt{2}(s-\tau) e_2&\quad\textrm{for }s\in(\tau,\tau+1/\sqrt{2}],\\
V_2&\quad\textrm{for }s\in(\tau+1/\sqrt{2}, T],
\end{array}\right.
\end{equation*}
which are all optimal from the above calculations. There exists  a positive constant $C$ sufficiently large such  each $y_\tau$ belongs  to $\tilde \Gamma_C$. Define the  measure $\mu$ on $\tilde \Gamma_C$ (defined in \eqref{eq:gamma_tilde_x})
as follows: for all Borel set $A\subset  \tilde \Gamma_C$,
\begin{equation*}
\mu(A)={\mathcal L} \left(\left\{\tau\in[0,1]\;:\; y_\tau\in A\right\}\right),
\end{equation*}
where ${\mathcal L}$ is the Lebesgue measure. The measure $\mu$ fulfills
\begin{itemize}
\item[$\bullet$] $\textrm{supp}(\mu)\subset  \Gamma_C^{\mu,\textrm{opt}}[V_1]$ 
\item[$\bullet$] $e_0\#\mu (\{V_1\})=\mu(\{\gamma\in \tilde \Gamma_C: \gamma(0)=V_1\})={\mathcal L} (\{ \tau\in[0,1]: y_\tau(0)=V_1\})=1=m_0(\{V_1\})$;
\end{itemize}
therefore, the measure $\mu$ is a relaxed MFG equilibrium. Let $(u,m)$ be the corresponding mild solution. We claim that for all 
$x\in\cG\setminus\{V_1,V_2\}$ and all $t\in [0,T]$
\begin{equation*}
m(t)(\{x\})=0.
\end{equation*}
Indeed, 
\begin{equation*}
m(t)(\{x\})=\mu(\{\gamma\in \tilde \Gamma_C\;:\; \gamma(t)=x\})={\mathcal L} \left(\left\{\tau\in[0,1]\;:\; y_\tau(t)=x\right\}\right)=0,
\end{equation*}
the last equality is true since the set $\left\{\tau\in[0,1]\;:\; y_\tau(t)=x\right\}$ contains at most one value.
\end{example}

\subsection{The continuity equation}
\label{cont_eq}

Consider a mild solution $(u,m)$ associated to some relaxed MFG equilibrium~$\mu\in \cPmzeroGammaLipCV$. Here, we make the same hypotheses as in Theorem~\ref{thm:MFGlip}, and
we study the evolution of the distribution $m(t)$.  We obtain that $m$ satisfies (in a suitable weak sense) a continuity equation in which the drift is given as the optimal feedback from the Hamilton-Jacobi equation. 
\begin{theorem}\label{thm:cont_eq}
Under the hypotheses of Theorem~\ref{thm:MFGlip}, let $\mu\in \cPmzeroGammaLipCV$ be a relaxed MFG equilibrium and $(u,m)$ be a related mild solution.  Then, for every $\phi\in C^\infty(\cG\times[0,T])$ such that $\textrm{supp}(\phi(\cdot,t))$ is contained in a compact subset of~$\cG$ independent of~$t$, there holds
\begin{multline}\label{eq:continuity}
m(t)(\{O\})\partial_t\phi(O,t)+\sum_{i=1}^N\int_{\cG}\car_{x\in J_i\setminus\{O\}}\left[\partial_t\phi(x,t)-Du(x,t)D\phi(x,t)\right]m(t)(dx)\\=\frac{d}{dt}\left[\int_{\cG}\phi(x,t)m(t)(dx)\right].
\end{multline}
For any $i=1,\dots, N$, 
\begin{equation}
\label{eq:defqi}  
 q_i= \frac d {dt} \left(\int_\G \car_{\gamma(\cdot)\in J_i\setminus\{O\}} \mu(d\gamma)\right)=  \frac d {dt} \Bigl( m(\cdot)\left(J_i\setminus\{O\}\right) \Bigr),
\end{equation}
is well defined in  $ {\mathcal D}'(0,T)$,
 and there holds 
\begin{equation}\label{eq:claim2_EC}
\frac d {dt}\left[m(\cdot)(\{O\})\right]+\sum_{i=1}^N q_i=0
\end{equation}
in the sense of ${\mathcal D}'(0,T)$.
\end{theorem}
\begin{remark}
In fact, in the proof of Theorem \ref{thm:cont_eq}, we will also 
obtain that, for all $i=1,\dots,N$,
\begin{multline}\label{eq:continuity22}
\frac d {dt} \left(\int_\G \car_{\gamma(\cdot)\in J_i\setminus\{O\}} \phi(\gamma(\cdot),\cdot)
\mu(d\gamma)\right)= \\ \phi(O,\cdot) q_i+ \int_{\cG}\car_{x\in J_i\setminus\{O\}}\left[\partial_t\phi(x,\cdot)-Du(x,\cdot)D\phi(x,\cdot)\right]m(\cdot)(dx)
\end{multline}
in the sense of ${\mathcal D}'(0,T)$.    
\end{remark}

\begin{remark}
Equation \eqref{eq:continuity} implies in particular that
for all $\phi\in C_0^\infty (\cG\times (0,T) )$,
\begin{multline*}
\int_0^T m(t)(\{O\})\partial_t\phi(O,t) dt \\
+\sum_{i=1}^N   \int_0^T \int_{\cG}\car_{x\in J_i\setminus\{O\}}\left[\partial_t\phi(x,t)-Du(x,t)D\phi(x,t)\right]m(t)(dx) dt =0.
\end{multline*}
\end{remark}

\medskip

Before giving the proof of Theorem  \ref{thm:cont_eq}, let us state a few useful lemmas.
\begin{lemma}\label{lemma:1_13apr}
For any $x\in\cG$, $t\mapsto m(t)(\{x\})$ is a measurable bounded function on~$[0,T]$. In particular, it admits a derivative in ${\mathcal D}'(0,T)$.
Moreover, for all $x\in \cG$, the set $\{t\in[0,T]\;:\; m(t)(\{x\})>0\}$ is measurable.
\end{lemma}
\begin{proof}
We consider only $x=O$ because the other cases are similar or simpler. Let us introduce a continuous and piecewise linear function $\phi_\epsilon$ on $\cG$ such that $\phi_\epsilon(O)=1$ and $\phi_\epsilon(x)=0$ for any $x\in\cG$ with $d(x,O)\geq \epsilon$. Clearly, $\{\phi_\epsilon\}_\epsilon$ is a monotone sequence of Lipschitz continuous functions with 
\[
\lim_{\epsilon\to 0}\phi_\epsilon(x)=\left\{\begin{array}{ll}
1&\qquad\textrm{if }x=O\\0&\qquad \textrm{if }x\ne O.
\end{array}\right.
\]
Monotone convergence theorem ensures: $m(t)(\{O\})=\lim_{\epsilon\to 0}\int_\cG\phi_\epsilon(x)m(t)(dx)$ for each $t\in[0,T]$.  On the other hand, by the definition of $\cPmzeroGammaLipCV$, for each $\epsilon>0$, the map $t\mapsto\int_\cG\phi_\epsilon(x)m(t)(dx)$ is Lipschitz continuous.
Hence, $t\mapsto m(t)(\{O\})$ is a measurable function because it is the pointwise limit of a sequence of (Lipschitz) continuous functions. In particular, $\{t\in[0,T]\;:\; m(t)(\{O\})>0\}$ is measurable. 
\end{proof}
\begin{lemma}\label{lemma:p5_13apr}
Consider $\phi$ as in Theorem~\ref{thm:cont_eq}. Then, for any $i\in\{1,\dots,N\}$, the function
\[
\mathcal{F}_{\phi,i}(t)= \int_{\Gamma}\phi(\gamma(t),t)\car_{\gamma(t)\in J_i\setminus\{O\}}\mu(d\gamma)
\]
is a bounded measurable function on~$(0,T)$. In particular, $\mathcal{F}_{\phi,i}$ admits a derivative in ${\mathcal D}'(0,T)$.
\end{lemma}
\begin{proof}
Fix $\phi$ and $i$ as in the statement. Consider a family of functions~$\{\psi_\epsilon\}$ such that: 
$\psi_\epsilon(x)\in [0,1]$, $\psi_\epsilon\in C^\infty(\cG)$, $\textrm{supp}(\psi_\epsilon)\subset J_i\setminus\{O\}$ and $\psi_\epsilon(x)=1$ for $d(x,O)\geq \epsilon$. Clearly, $\lim_{\epsilon\to 0}\psi_\epsilon(x)=\car_{J_i\setminus\{O\}}(x)$ for any $x\in \cG$.\\
The functions
\[
t\mapsto \int_{\Gamma}\phi(\gamma(t),t)\psi_\epsilon(\gamma(t))\mu(d\gamma)
\]
are (Lipschitz) continuous. On the other hand, from the dominated convergence theorem, 
\[
\lim_{\epsilon\to 0}\int_{\Gamma}\phi(\gamma(t),t)\psi_\epsilon(\gamma(t))\mu(d\gamma)= \mathcal{F}_{\phi,i}(t).
\]
Being the pointwise limit of bounded continuous functions, the function~$\mathcal{F}_{\phi,i}$ is measurable and bounded.
\end{proof}
\begin{proof}[Proof of Theorem~\ref{thm:cont_eq}]
Fix $\phi$ as in the statement. By the regularity of~$\phi$ and of~$m$ with respect to~$t$, the function
\[
\zeta(t)=\int_\cG\phi(x,t)m(t)(dx)
\]
is Lipschitz continuous on~$(0,T)$; in particular, $\partial_t\zeta\in L^\infty(0,T)$. There holds
\begin{eqnarray*}
\zeta(t)&=&\phi(O,t)m(t)(\{O\})+\sum_{i=1}^N \int_\cG\phi(x,t)\car_{x\in J_i\setminus\{O\}} m(t)(dx)\\
&=&\phi(O,t)m(t)(\{O\})+\sum_{i=1}^N \int_\Gamma\phi(\gamma(t),t)\car_{\gamma(t)\in J_i\setminus\{O\}}\mu(d\gamma).
\end{eqnarray*}
Note that Lemma~\ref{lemma:1_13apr} and Lemma~\ref{lemma:p5_13apr} ensure that each contribution in the right hand side of the latter identity  has a derivative in ${\mathcal D}'(0,T)$. We may therefore calculate the distributional derivative of $\zeta$. From now on, the notation $\langle \cdot,\cdot\rangle $ stands for the duality between ${\mathcal D}'(0,T)$ and $C^\infty_0(0,T)$.
We claim that, for distributions $q_i\in {\mathcal D}'(0,T)$, $i=1,\dots,N$, that will be characterized later, 
there holds
\begin{multline}\label{eq:claim_CE}
\frac {d\zeta}{d t}=\frac {d}{d t}[\phi(O,\cdot)m(\cdot)(\{O\})] +\sum_{i=1}^N\phi(O,\cdot)q_i\\+\sum_{i=1}^N \int_\cG\left[\partial_t \phi(x,\cdot)-D\phi(x,\cdot) Du(x,\cdot)\right]\car_{x\in J_i\setminus\{O\}}m(\cdot)(dx)
\end{multline}
in the sense of ${\mathcal D}'(0,T)$.
Indeed, for every test function $\chi\in C^\infty_0(0,T)$, 
\begin{equation}\label{caim_EC_1}
    \left\langle\frac {d\zeta}{d t},\chi\right\rangle - \left\langle\frac d {dt} [\phi(O,\cdot)m(\cdot)(\{O\})],\chi\right\rangle =\sum_{i=1}^N I_i
\end{equation}
with 
\begin{equation}\label{caim_EC_1_1}
I_i=-\int_0^T\chi'(t)\left[\int_\Gamma\phi(\gamma(t),t)\car_{\gamma(t)\in J_i\setminus\{O\}}\mu(d\gamma)\right]dt.
\end{equation}
Consider a function $\psi \in C^\infty(\cG)$, $\textrm{supp}(\psi)\subset J_i\setminus\{O\}$, $\psi(x)=1$ for all $x\in J_i$ such that $d(x,O)\geq 1$ and $\psi|_{J_i}$ is increasing with respect to $d(x,O)$. Setting $\psi_\epsilon(x)= \psi(\frac x \epsilon)$, we observe that $\psi_\epsilon$ converges pointwise to $\car_{J_i\setminus \{O\}}$ has $\epsilon\to 0$. Hence,
\begin{eqnarray*}
I_i&=&-\lim_{\epsilon\to 0}\int_0^T\chi'(t)\left[\int_\Gamma\phi(\gamma(t),t)\psi_\epsilon(\gamma(t))\mu(d\gamma)\right]dt.
\end{eqnarray*}
From Remark~\ref{rmk:5_13apr} and the definition of $\psi_\epsilon$,  for all $t\in (0,T]$, for $\mu$- a.a. $\gamma$,
\begin{eqnarray*}
    \begin{split}
       \ds \car_{\gamma(t)\in J_i\setminus\{0\}} &= \car_{\gamma(t)\in J_i\setminus\{0\}  } \car_{(\gamma(t),t)\in Q_m}, \\ 
       \ds \psi_\epsilon(\gamma(t)) &= \psi_\epsilon(\gamma(t)) \car_{(\gamma(t),t)\in Q_m}.
    \end{split}
\end{eqnarray*}
Therefore, Proposition~\ref{prp:CCC2_4.1+4.2}-$(b)$ and Lemma~\ref{lemma:reg_u_mfg}-$(b)$ (in particular the validity of Lemma~\ref{lemma:note4.9}-$(ii)$) guarantee
that
\begin{equation}\label{eq:OS_final}
\car_{\gamma(t)\ne O}\gamma'(t) = - \car_{\gamma(t)\ne O} Du(\gamma(t),t) \qquad \textrm{for }\mu-\textrm{a.e. } \gamma.
\end{equation}
Since the right hand side of \eqref{eq:OS_final} is the limit as $h\to 0$ of $ -\car_{\gamma(t)\ne O}  (u(\gamma(t)+h) - u(\gamma(t))/h $ as $h\to 0$, it is measurable and essentially bounded w.r.t. $\mu$, and so is the function in the left hand side of \eqref{eq:OS_final}. Hence, 
observing also  that $0\notin \textrm{supp}(\psi_\epsilon)$,
differentiation under the integral sign is permitted for  $\ds t\mapsto \int_\Gamma\phi(\gamma(t),t)\psi_\epsilon(\gamma(t))
\mu(d\gamma)$. We get 
\begin{eqnarray*}
I_i=\lim_{\epsilon\to 0}  \int_0^T\chi(t)
\int_\Gamma 
\left[
\begin{array}[c]{l} \displaystyle
\Bigl(\partial_t \phi(\gamma(t),t)-D\phi(\gamma(t),t)\cdot Du(\gamma(t),t)\Bigr)\psi_\epsilon (\gamma(t)) 
\\ \displaystyle
-\phi(\gamma(t),t) D\psi_\epsilon(\gamma(t))\cdot Du(\gamma(t),t) 
\end{array}
\right]\mu(d\gamma)
dt.
\end{eqnarray*}
Hence, 
\begin{eqnarray*}
I_i&=&\lim_{\epsilon\to 0}(I_{i1}+I_{i2})
\end{eqnarray*}
where
\begin{eqnarray*}
I_{i1}&=&
\int_0^T\chi(t) \int_\Gamma(\partial_t \phi (\gamma(t),t)-D\phi(\gamma(t),t) Du(\gamma(t),t))\psi_\epsilon(\gamma(t)) 
\mu(d\gamma) dt\\
I_{i2}&=&-\int_0^T\chi(t)\int_\Gamma\phi(\gamma(t),t) D\psi_\epsilon (\gamma(t)) Du(\gamma(t),t)
\mu(d\gamma)dt.
\end{eqnarray*}
Dominated convergence theorem yields
\begin{eqnarray}\notag
\lim_{\epsilon\to 0} I_{i1}&=&\int_0^T\chi(t) \int_\Gamma\left[\partial_t \phi(\gamma(t),t)-D\phi (\gamma(t),t)Du(\gamma(t),t)\right]
\car_{\gamma(t)\in J_i\setminus\{O\}}
\mu(d\gamma) dt\\ \label{eq:p7_13apr}
&=&\int_0^T\chi(t) \left[\int_\cG\left[\partial_t \phi(x,t)-D\phi(x,t) Du(x,t)\right]\car_{x\in J_i\setminus\{O\}}m(t)(dx)\right]dt.
\end{eqnarray}
On the other hand, there holds
\begin{eqnarray*}
I_{i2}&=&-\int_0^T\chi(t)\left[\int_\Gamma\left[\phi(\gamma(t),t)-\phi(O,t)\right] D\psi_\epsilon 
(\gamma(t))Du(\gamma(t),t)
\mu(d\gamma)\right]dt\\
&&\qquad-\int_0^T\chi(t)\phi(O,t)\int_\G D\psi_\epsilon (\gamma(t))Du(\gamma(t))
\mu(d\gamma)dt\\
&=:& I_{i3}+I_{i4}.
\end{eqnarray*}
We now deal with $I_{i3}$ and $I_{i4}$ separately. First, from the regularity of~$\phi$, 
\begin{eqnarray*}
  I_{i3}&=&-\int_0^T\chi(t)\int_\Gamma\left[D\phi(O,t)\bar \gamma(t)+\epsilon o(1)\right] D\psi_\epsilon(\gamma(t))\ Du(\gamma(t),t)
  \mu(d\gamma)dt\\
&=&-\int_0^T\chi(t)\left[\int_\Gamma D\phi(O,t)D \beta_\epsilon(\bar\gamma(t)) Du(\gamma(t),t)
\mu(d\gamma)\right]dt+o(1)
\end{eqnarray*}
where $\gamma(t)=\bar\gamma(t)e_i$, $\beta_\epsilon$ is defined by
\begin{equation*}
\beta_\epsilon(\bar x)=\int_0^{\bar x}\xi D\psi_\epsilon(\xi e_i)d\xi \qquad \forall \bar x\in[0,\infty).
\end{equation*} and 
 $o(1)$ stands for a function of $\epsilon$ such that $\lim_{\epsilon\to 0}o(1)=0$.
Note that $\beta_\epsilon$ is an increasing regular function and fulfills: $\beta_\epsilon(0)=0$, $\beta_\epsilon(\bar x)$ is a constant for $\bar x\geq \epsilon$ with $\beta_\epsilon(\epsilon)=O(\epsilon)$. Therefore, taking into account the regularity of $\phi$ and of $\chi$ 
and arguing as above, 
we get
\begin{eqnarray} \notag
I_{i3}&=&
-\int_0^T\chi(t) \int_\Gamma D\phi(O,t)D \beta_\epsilon(\bar\gamma(t)) Du(\gamma(t),t)\ + \partial_t D\phi(O,t)\beta_\epsilon(\bar\gamma(t))\mu(d\gamma)dt+o(1)\\ \notag
&=&
-\int_0^T\chi(t) \int_\Gamma \partial_t\Bigl(D\phi(O,\cdot)\beta_\epsilon(\bar\gamma(\cdot))\Bigr)\mu(d\gamma) dt\;+\;o(1)\\ \notag
&=&
\int_0^T\chi' (t) \int_\Gamma D\phi(O,\cdot)\beta_\epsilon(\bar\gamma(\cdot))\mu(d\gamma)dt\;+\;o(1)\\ \label{eq:7777}
&=&
o(1)
\end{eqnarray}
where the last line is due to the properties of $\beta_\epsilon$.\\
From~\eqref{eq:7777} and~\eqref{eq:p7_13apr}, we deduce that $\lim_{\epsilon\to 0} I_{i4} = I_i- \lim_{\epsilon\to 0} I_{i1} $.
\\
Because we can choose $\phi(O,\cdot)=1$ on $\textrm{supp}(\chi)$, this in particular implies  that 
\begin{equation*}
t\mapsto -\int_\G D\psi_\epsilon(\gamma(t)) Du(\gamma(t),t)
\mu(d\gamma)
\end{equation*} tends to some $q_i$ in ${\mathcal D}'(0,T)$ as $\epsilon\to 0$. Hence,
\begin{equation}\label{eq:def_qi}
t\mapsto -\phi(O,t)\int_\G D\psi_\epsilon(\gamma(t)) Du(\gamma(t),t)
\mu(d\gamma)
\end{equation}
tends to $\phi(O,\cdot) q_i$  in ${\mathcal D}'(0,T)$ as $\epsilon\to 0$. We have obtained \eqref{eq:continuity22}.
Injecting \eqref{eq:continuity22} into \eqref{caim_EC_1} yields \eqref{eq:claim_CE}.

In order to complete the proof, there remains to check 
\eqref{eq:claim2_EC} and \eqref{eq:defqi}.
Clearly, there holds: $\frac  d {dt} \Bigl(m(\cdot)(\{O\})+\sum_i m(\cdot)\left(J_i\setminus\{O\}\right)\Bigr)=0$. Hence, \eqref{eq:claim2_EC} will follow from
\eqref{eq:defqi}.

From the definition of $q_i$ as the limit of 
$-\int_\G D\psi_\epsilon(\gamma(\cdot)) Du(\gamma(\cdot),\cdot)
\mu(d\gamma)$ as $\epsilon$ tends to $0$
and from~\eqref{eq:OS_final},  there holds
\begin{eqnarray*}
\langle q_i,\chi\rangle&=&-\lim_{\epsilon\to0} \int_0^T\left[\int_\G D\psi_\epsilon(\gamma(t)) Du(\gamma(t),t)
\mu(d\gamma)\right]\chi(t)dt\\
&=&\lim_{\epsilon\to0} \int_0^T\left[\int_\G \partial_t\psi_\epsilon(\gamma(t))
\mu(d\gamma)\right]\chi(t)dt\\
&=&-\lim_{\epsilon\to0} \int_0^T\left[\int_\G \psi_\epsilon(\gamma(t))
\mu(d\gamma)\right]\chi'(t)dt\\
&=&-\int_0^T\left(\int_\G \car_{\gamma(t)\in J_i\setminus\{O\}}
\mu(d\gamma)\right)\chi'(t)dt
\end{eqnarray*}
for every test function $\chi\in C^\infty_0(0,T)$,
where the last equality is due to the dominated convergence theorem.
The proof of \eqref{eq:claim2_EC} 
with \eqref{eq:defqi}  is achieved. 
\end{proof}

\noindent{\bf Acknowledgements.} 
The first author is partially on academic leave at Inria for the years 2021-22 and 2022-23 and acknowledges the hospitality of this institution during this period. The first author is partially supported by the chair Finance and Sustainable Development and FiME Lab (Institut Europlace de Finance).
The first and fourth authors are  partially supported by ANR (Agence Nationale de la Recherche) through project COSS, ANR-22-CE40-0010-01. 
The second and the third authors are partially supported by INDAM-GNAMPA.  The fourth author benefits from the support of the French government “Investissements d’Avenir” program integrated to France 2030, bearing the following reference 
ANR-11-LABX-0020-01 and acknowledges INDAM-GNAMPA for the visiting professor position at the University of Padova.

{\small
 }

\end{document}